\documentclass[11pt]{article}

\usepackage{amsmath,amssymb,amsthm,mathtools}
\usepackage{graphicx}
\usepackage{enumitem}
\usepackage{geometry}
\usepackage{comment}
\usepackage[colorlinks=true,
  linkcolor=blue,
  citecolor=blue,
  urlcolor=blue]{hyperref}

\DeclareMathOperator{\diam}{diam}

\newtheorem{theorem}{Theorem}
\newtheorem{definition}[theorem]{Definition}
\newtheorem{proposition}[theorem]{Proposition}
\newtheorem{lemma}[theorem]{Lemma}
\newtheorem{corollary}[theorem]{Corollary}

\theoremstyle{remark}
\newtheorem{remark}{Remark}[section]
\newtheorem{example}{Example}[section]

\title{Projection Theorems for $\Phi$--Intermediate Dimensions}
\author{Lara Daw and Najmeddine Attia}
\date{}

\begin{document}
\maketitle
\begin{abstract} 
$\Phi$--intermediate dimensions interpolate between Hausdorff and box--counting dimensions by restricting admissible coverings to scale windows of the form $[\Phi(r),\,r]$. Using a family of $\Phi$--dependent kernels, we develop a potential-theoretic framework that characterizes these dimensions in terms of capacities and leads to associated $\Phi$--dimension profiles. This framework provides effective tools for obtaining lower bounds from uniform potential estimates. As an application, we prove Marstrand--Mattila type projection theorems, showing that for $\gamma_{n,m}$--almost all $m$--dimensional subspaces $V$, the $\Phi$--intermediate dimensions of $\pi_V E$ coincide with deterministic profile values depending only on $E$ and $m$. We also discuss consequences for continuity at the Hausdorff end-point and for the box dimensions of typical projections.

\medskip
\noindent\textit{MSC 2020:} Primary 28A80; Secondary 28A78, 28A75, 31C15.

\noindent\textit{Keywords:} $\Phi$--intermediate dimensions; projection theorems;
capacity methods; dimension profiles; Marstrand--Mattila theorem; fractal
geometry.
\end{abstract}

\section{Introduction}

The behaviour of fractal dimensions under orthogonal projections is a
central topic in geometric measure theory.
In his seminal 1954 work, Marstrand~\cite{Marstrand1954} proved that for
any Borel set $E\subset\mathbb{R}^2$, the Hausdorff dimension of its
projection onto almost every line satisfies
\begin{equation}\label{eq:Marstrand}
\underline{\dim}_H(\pi_V E)=\min\{\underline{\dim}_H E,\,1\},
\end{equation}
where $\pi_V$ denotes orthogonal projection onto the line $V$.
Kaufman~\cite{Kaufman1968} later provided a potential--theoretic proof of
this result.
In higher dimensions, Mattila~\cite{Mattila1975} extended Marstrand’s
theorem, showing that if $E\subset\mathbb{R}^n$ and $1\le m<n$, then
\[
\underline{\dim}_H(\pi_V E)
=
\min\{\underline{\dim}_H E,\,m\}
\qquad
\text{for $\gamma_{n,m}$--almost all $V\in G(n,m)$},
\]
where $\gamma_{n,m}$ denotes the rotation--invariant probability measure
on the Grassmannian $G(n,m)$.
These results establish the almost--sure invariance of Hausdorff
dimension under orthogonal projections, up to the dimension of the
target space.

For coarser notions of dimension, projection behaviour is more delicate.
Falconer and Howroyd~\cite{FalconerHowroyd1997} showed that the lower and
upper Minkowski (box--counting) dimensions of $\pi_V E$ are almost surely
constant and can be expressed in terms of deterministic
\emph{dimension profiles} depending only on $E$ and $m$.
These profiles were originally defined implicitly, limiting their
practical applicability.
A more explicit and analytically tractable formulation was later
introduced by Falconer~\cite{Falconer2019} via capacities, providing a
functional--analytic framework for projection results at the level of
box and packing dimensions.

To interpolate between Hausdorff and Minkowski dimensions, Falconer,
Fraser and Kempton~\cite{FFK2019} introduced the intermediate dimensions
$\underline{\dim}_\theta E$ for $0\le\theta\le1$, defined by restricting
admissible covers to sets with diameters in the range
$[r^{1/\theta},\,r]$.
These dimensions satisfy
$\underline{\dim}_0 E=\underline{\dim}_H E$ and
$\underline{\dim}_1 E=\underline{\dim}_B E$, and form a monotone family
interpolating between the two.
They are bi--Lipschitz invariant and continuous on $(0,1]$, though
discontinuity at the Hausdorff end--point $\theta=0$ may occur.
Intermediate dimensions are particularly effective for sets exhibiting a gap between Hausdorff and box dimensions. They have since become a central tool for analysing multi‑scale and non‑uniform fractal structure.

A significant advance was made by Burrell, Falconer and
Fraser~\cite{BFF2021}, who developed a potential--theoretic
characterisation of intermediate dimensions and proved an almost--sure
projection theorem in this setting.
Specifically, they showed that for any Borel set
$E\subset\mathbb{R}^n$ and each $m\in\{1,\dots,n-1\}$ there exist
deterministic profiles
$\underline{\dim}_\theta^m E$ and $\overline{\dim}_\theta^m E$ such that
\[
\underline{\dim}_\theta(\pi_V E)
=
\underline{\dim}_\theta^m E,
\qquad
\overline{\dim}_\theta(\pi_V E)
=
\overline{\dim}_\theta^m E,
\]
for all $\theta\in[0,1]$ and for $\gamma_{n,m}$--almost all
$V\in G(n,m)$.
This framework simultaneously recovers the Marstrand--Mattila theorem
and the Falconer--Howroyd box--dimension result as limiting cases.

\medskip

The power--law restriction $r^{1/\theta}$ inherent in the definition of intermediate dimensions imposes a rigid relation between admissible scales. In particular, it excludes scale windows that vary non‑polynomially, such as logarithmic or slowly varying regimes. This motivates the introduction of more flexible scale restrictions
capable of capturing finer multi--scale behaviour.
In this paper, we replace the power--law lower bound by a general gauge
function $\Phi$.

Let $\Phi:(0,1)\to(0,1)$ satisfy
\begin{equation}\label{eq:Phi-condition}
0<\Phi(r)\le r
\quad\text{for all sufficiently small }r,
\quad
\frac{\Phi(r)}{r}\to0
\quad\text{as }r\to0.
\end{equation}
For such $\Phi$, we define the lower and upper
\emph{$\Phi$--intermediate dimensions}
$\underline{\dim}_\Phi E$ and $\overline{\dim}_\Phi E$ by restricting
admissible covers to sets with diameters in $[\Phi(r),\,r]$.
Power--law choices $\Phi(r)=r^{1/\theta}$ recover the classical
intermediate dimensions, while more general gauges allow for
non--power--law interpolations, including logarithmic corrections.
The condition $\Phi(r)/r\to0$ ensures that the admissible scale window
approaches the Hausdorff regime.

Our approach is potential--theoretic and builds on the capacity methods
developed for intermediate dimensions.
Section~\ref{sec:capacity} introduces a systematic $\Phi$--dependent
kernel and capacity framework, defines $\Phi$--dimension profiles, and
establishes tools for converting uniform potential estimates into
covering, capacity, and dimension lower bounds.
This framework also allows localisation of $\Phi$--dimension profiles
to compact subsets.

Our main result is a Marstrand‑-Mattila type projection theorem for $\Phi$‑intermediate dimensions, establishing almost‑sure equality between projected dimensions and deterministic $\Phi$‑dimension profiles. We show that for any Borel set $E\subset\mathbb{R}^n$ and $1\le m<n$, the
$\Phi$--intermediate dimensions of $\pi_V E$ are
$\gamma_{n,m}$--almost surely equal to the corresponding
$\Phi$--dimension profile values.
This strictly extends the projection theorem of~\cite{BFF2021} and
encompasses the Hausdorff, box--counting, and $\theta$--intermediate
cases within a unified analytic framework.

Applications and further consequences are discussed in
Section~\ref{sec:applications}, including continuity at the Hausdorff
end--point and implications for box dimensions of typical projections.
Open problems and directions for future work are presented in
Section~\ref{sec:conclusion}.

\section{$\Phi$--Intermediate Dimensions: Definitions and Basic Properties}\label{sec:Phi-def}
This section introduces $\Phi$‑intermediate dimensions and collects their basic properties, emphasising how allowing general gauge functions extends the rigidity of power‑law intermediate dimensions. In analogy with the framework introduced by Falconer, Fraser, and Kempton \cite{FFK2019}, who defined the \emph{intermediate dimensions} $\underline{\dim}_\theta E$ for $0 \le \theta \le 1$ by restricting admissible covers to sets with diameters in the range $[r^{1/\theta},r]$ - thereby interpolating between the Hausdorff dimension $\underline{\dim}_H E$ (at $\theta = 0$) and the box--counting dimension $\underline{\dim}_B E$ (at $\theta = 1$) - we now introduce the \emph{$\Phi$--intermediate dimensions}, which generalize this construction by replacing the power--law lower bound $r^{1/\theta}$ with a more flexible admissible function $\Phi(r)$ satisfying $\Phi(r) \le r$ and $\Phi(r)/r \to 0$ as $r \to 0$.

\begin{definition}\label{def:phi-dim}
Let $E \subset \mathbb{R}^n$ be a non-empty bounded set. The lower $\Phi$--intermediate dimension of E is defined by 
\begin{equation}
\begin{aligned}
\underline{\dim}_\Phi E := \inf\Big\{ & s \ge 0: \textit{for all } \epsilon >0 \textit{ and all } r_0>0, \\ 
& \textit{ there exists } 0< r \leq r_0 \textit{ and a cover } \{U_i\} \textit{ of } E \\ 
& \textit{ such that } \Phi(r) \leq |U_i| \leq r \textit{ and } \sum |U_i|^s \leq \epsilon \Big\}. 
\end{aligned}
\label{lower_dim1}
\end{equation}
and the upper $\Phi$--intermediate dimension by
\begin{equation}
\begin{aligned}
\overline{\dim}_\Phi E := \inf\Big\{ & s \ge 0: \textit{for all } \epsilon > 0,
\textit{ there exists } r_0 > 0, \\
& \textit{ such that for all } 0 < r \le r_0,
\textit{ there exists a cover } \{U_i\} \textit{ of } E \\
& \textit{ such that } \Phi(r) \le |U_i| \le r
\textit{ and } \sum |U_i|^s \le \epsilon \Big\}
\end{aligned}
\label{upper_dim1}
\end{equation}
where $|U|$ denotes the diameter of the set $U \subset \mathbb{R}^n$.
\end{definition}

\noindent When $\Phi(r) = r^{1/\theta}$ for some $\theta \in (0,1]$, the above definition recovers the intermediate dimensions $\underline{\dim}_\theta E$ and $\overline{\dim}_\theta E$ from \cite{FFK2019, BFF2021}. In particular, $\underline{\dim}_{1}E$ and $\overline{\dim}_{1}E$ coincide with the lower and upper Minkowski dimensions $\underline{\dim}_B E$ and $\overline{\dim}_B E$, and in the limit as $\Phi(r) \to 0$ (formally $\theta \to 0$) one recovers $\underline{\dim}_0 E = \underline{\dim}_H E$. Thus, $\Phi$--intermediate dimensions indeed lie between Hausdorff and box--counting dimensions. Note that $\Phi(r)$ is required to be $o(r)$ as $r \to 0$ (see \eqref{eq:Phi-condition}), which ensures we exclude the trivial case $\Phi(r) \sim c\,r$ (a fixed proportion of $r$) that would simply yield the box--counting dimension.

\noindent
For our purposes, it is convenient to work with equivalent formulations of
$\Phi$--intermediate dimensions expressed in terms of covering sums.
Let $E \subset \mathbb{R}^n$ be bounded and non-empty, let $\Phi$ satisfy
\eqref{eq:Phi-condition}, and fix $s \in [0,n]$.
Define
\begin{equation}\label{eq:cover-sum}
S^s_{r,\Phi}(E)
:= \inf\Big\{
\sum_i |U_i|^s :
\{U_i\} \text{ covers } E,\ \Phi(r)\le |U_i|\le r \text{ for all } i
\Big\}.
\end{equation}
Here $r$ plays the role of the governing scale parameter, while $\Phi(r)$
acts as a lower admissibility constraint.

\medskip

The lower and upper $\Phi$--intermediate dimensions admit the following
equivalent characterisations:
\begin{equation}\label{eq:lower-dim-cover}
\underline{\dim}_\Phi E
:= \sup\left\{
s\in[0,n]:
\liminf_{r\to0}\frac{\log S^s_{r,\Phi}(E)}{-\log r}\ge 0
\right\},
\end{equation}
and
\begin{equation}\label{eq:upper-dim-cover}
\overline{\dim}_\Phi E
:= \sup\left\{
s\in[0,n]:
\limsup_{r\to0}\frac{\log S^s_{r,\Phi}(E)}{-\log r}\ge 0
\right\}.
\end{equation}
These formulations are equivalent to Definitions
\eqref{lower_dim1}‑\eqref{upper_dim1}.

\begin{lemma}\label{lem:Phi-critical}
Let $E\subset \mathbb{R}^n$ be bounded and non-empty, and let
$\Phi:(0,1]\to(0,1]$ satisfy \eqref{eq:Phi-condition}.
For $0\le t\le s\le n$ and $0<r<1$, define
\[
\theta_\Phi(r):=\frac{\log r}{\log \Phi(r)}\in(0,1].
\]
Then
\begin{equation}\label{eq:Phi-log-ineq-theta}
-\theta_\Phi(r)^{-1}(s-t)
\ \le\
\frac{\log S^s_{r,\Phi}(E)}{-\log r}
-
\frac{\log S^t_{r,\Phi}(E)}{-\log r}
\ \le\
-(s-t).
\end{equation}
\end{lemma}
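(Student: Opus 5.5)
The plan is to compare, cover by cover, the $s$-sum and the $t$-sum. Every admissible diameter lies in the window $[\Phi(r),r]$, so we will bound the ratio of the two sums pointwise, then pass to infima and take logarithms. Since $0<r<1$, we have $-\log r>0$, so dividing by $-\log r$ preserves inequalities. Fix an admissible cover $\{U_i\}$ of $E$ with $\Phi(r)\le |U_i|\le r$. Since $s-t\ge 0$ and $|U_i|\le r<1$, we have $|U_i|^{s}=|U_i|^{t}|U_i|^{s-t}$, and the window gives
\[
\Phi(r)^{s-t}\,|U_i|^t\ \le\ |U_i|^s\ \le\ r^{s-t}\,|U_i|^t .
\]
Summing over $i$ gives
\[
\Phi(r)^{s-t}\sum_i|U_i|^t\le\sum_i|U_i|^s\le r^{s-t}\sum_i|U_i|^t .
\]

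Next we take infima over all admissible covers. The class of covers is the same for both exponents, since it depends only on $r$ and $\Phi$. The bounds therefore pass to infima, giving
\[
\Phi(r)^{s-t}S^t_{r,\Phi}(E)\le S^s_{r,\Phi}(E)\le r^{s-t}S^t_{r,\Phi}(E).
\]
Here we use that $S^t_{r,\Phi}(E)$ is finite and positive. It is finite because $E$ is bounded, so finitely many sets of diameter $r$ cover it. It is positive because every admissible cover contains at least one set, of diameter at least $\Phi(r)>0$. Hence $S^t_{r,\Phi}(E)\ge \Phi(r)^t>0$, and the logarithms are defined.

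Finally we take logarithms and divide by $-\log r>0$. The upper bound becomes $\log S^s-\log S^t\le (s-t)\log r$, which after division yields exactly $-(s-t)$. The lower bound becomes $\log S^s-\log S^t\ge (s-t)\log\Phi(r)$, and dividing gives
\[
\frac{(s-t)\log \Phi(r)}{-\log r}=-(s-t)\frac{\log\Phi(r)}{\log r}=-\theta_\Phi(r)^{-1}(s-t).
\]
It remains to check that $\theta_\Phi(r)\in(0,1]$. Because $0<\Phi(r)\le r<1$, both logarithms are negative and $|\log\Phi(r)|\ge|\log r|$. We must assume $r$ is small enough that $\Phi(r)\le r$, as in \eqref{eq:Phi-condition}. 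There is no real obstacle; the only points needing care are the sign conventions when dividing by $\log r$ and $\log\Phi(r)$, and the positivity and finiteness of the covering sums.
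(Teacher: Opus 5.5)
Your proof is correct and follows the paper's argument: the cover-by-cover bound $\Phi(r)^{s-t}|U_i|^t\le|U_i|^s\le r^{s-t}|U_i|^t$, passage to infima, then taking logarithms and dividing by $-\log r$. You also check that $0<S^t_{r,\Phi}(E)<\infty$, and you note that $r$ must be small enough that $\Phi(r)\le r$, so that admissible covers exist and $\theta_\Phi(r)\in(0,1]$; the paper leaves both points implicit, and they are worth keeping.
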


\begin{proof}
Fix $0<r<1$ and let $\{U_i\}$ be any cover of $E$ satisfying
$\Phi(r)\le |U_i|\le r$ for all $i$.
For $0\le t\le s\le n$ we write
\[
|U_i|^s = |U_i|^t\,|U_i|^{s-t}.
\]
Since $\Phi(r)\le |U_i|\le r$, we have
\[
\Phi(r)^{s-t}\le |U_i|^{s-t}\le r^{s-t}.
\]
Multiplying by $|U_i|^t$ and summing over $i$ yields
\[
\Phi(r)^{s-t}\sum_i |U_i|^t
\le
\sum_i |U_i|^s
\le
r^{s-t}\sum_i |U_i|^t.
\]
Taking the infimum over all admissible covers gives
\[
\Phi(r)^{s-t} S^t_{r,\Phi}(E)
\le
S^s_{r,\Phi}(E)
\le
r^{s-t} S^t_{r,\Phi}(E).
\]
Taking logarithms and dividing by $-\log r>0$ yields
\[
-(s-t)\frac{\log\Phi(r)}{\log r}
\le
\frac{\log S^s_{r,\Phi}(E)}{-\log r}
-
\frac{\log S^t_{r,\Phi}(E)}{-\log r}
\le
-(s-t),
\]
which is \eqref{eq:Phi-log-ineq-theta}.
\end{proof}

\noindent
In particular, the functions
\[
s\longmapsto \liminf_{r\to0}\frac{\log S^s_{r,\Phi}(E)}{-\log r},
\qquad
s\longmapsto \limsup_{r\to0}\frac{\log S^s_{r,\Phi}(E)}{-\log r}
\]
are non-increasing on $[0,n]$, ensuring that the critical values
\eqref{eq:lower-dim-cover} and \eqref{eq:upper-dim-cover} are well defined.
\medskip

\noindent The following basic properties are standard consequences of the definitions
and appear implicitly in the literature on intermediate and generalised
intermediate dimensions; we include them here for completeness.

\begin{proposition}\label{prop:basic}
Let $E \subset \mathbb{R}^n$ be bounded and non-empty.
If $\Phi_1,\Phi_2$ satisfy \eqref{eq:Phi-condition} and
$\Phi_1(r) \le \Phi_2(r)$ for all sufficiently small $r$, then
\[
\underline{\dim}_{\Phi_1} E \le \underline{\dim}_{\Phi_2} E,
\qquad
\overline{\dim}_{\Phi_1} E \le \overline{\dim}_{\Phi_2} E.
\]
In particular:
\begin{itemize}
\item[(i)]
\[
\underline{\dim}_H E
\;\le\;
\underline{\dim}_\Phi E
\;\le\;
\overline{\dim}_\Phi E
\;\le\;
\overline{\dim}_B E
\]
for every admissible $\Phi$.
\item[(ii)]
If $f:\mathbb{R}^n \to \mathbb{R}^m$ is bi-Lipschitz, then
\[
\underline{\dim}_\Phi f(E)=\underline{\dim}_\Phi E,
\qquad
\overline{\dim}_\Phi f(E)=\overline{\dim}_\Phi E.
\]
\end{itemize}
\end{proposition}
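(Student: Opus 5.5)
The plan is to work directly from Definition~\ref{def:phi-dim}, comparing admissible covers for the two gauges.

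\textbf{Monotonicity in $\Phi$.} Suppose $\Phi_1(r)\le\Phi_2(r)$ for all $r<r_1$. Then every cover admissible at scale $r$ for $\Phi_2$, i.e.\ with $\Phi_2(r)\le|U_i|\le r$, is also admissible for $\Phi_1$, since $\Phi_1(r)\le\Phi_2(r)\le|U_i|$. Hence $S^s_{r,\Phi_1}(E)\le S^s_{r,\Phi_2}(E)$ for all $r<r_1$.

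Now let $s>\underline{\dim}_{\Phi_2}E$. Given $\epsilon>0$ and $r_0>0$, apply the $\Phi_2$ definition with $\min\{r_0,r_1\}$ in place of $r_0$. This produces $r\le r_0$ and a $\Phi_2$-admissible cover with $\sum|U_i|^s\le\epsilon$, and that cover is also $\Phi_1$-admissible. So $s$ lies in the defining set for $\underline{\dim}_{\Phi_1}E$, which gives $\underline{\dim}_{\Phi_1}E\le\underline{\dim}_{\Phi_2}E$.

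The upper case is identical: shrink the $r_0$ supplied by the $\Phi_2$ definition to $\min\{r_0,r_1\}$, so that for every $r\le r_0$ the $\Phi_2$ cover serves for $\Phi_1$. I will argue via the infimum definitions rather than through \eqref{eq:lower-dim-cover}. That avoids checking the equivalence of the two formulations, which is not the point here.

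\textbf{Item (i).} I will establish the three inequalities separately.

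\emph{Middle inequality.} If $s$ lies in the upper defining set, then for $\epsilon$ and any $r_0$ I take the $r_0'$ it provides and any $r\le\min(r_0,r_0')$. This puts $s$ in the lower defining set, so $\underline{\dim}_\Phi E\le\overline{\dim}_\Phi E$.

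\emph{Left inequality.} Let $s>\underline{\dim}_\Phi E$. For every $\epsilon$ and $\delta$ there is a cover by sets of diameter at most $r\le\delta$ with $\sum|U_i|^s\le\epsilon$. Hence $\mathcal H^s_\delta(E)\le\epsilon$, so $\mathcal H^s(E)=0$ and $\dim_H E\le s$.

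\emph{Right inequality.} For $s>\overline{\dim}_B E$, cover $E$ by $N_r(E)$ sets of diameter exactly $r$. If $\Phi(r)\le r$ these are admissible, and $\sum=N_r(E)r^s\le r^{s-t}\to0$ for some $t$ with $\overline{\dim}_BE<t<s$ and all small $r$. So $s$ lies in the upper defining set, giving $\overline{\dim}_\Phi E\le\overline{\dim}_B E$.

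Alternatively, (i) follows from monotonicity by comparing with the box case $\Phi(r)=r$ and a Hausdorff limit. The direct argument is cleaner because $\Phi(r)=r$ is itself excluded by \eqref{eq:Phi-condition}.

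\textbf{Item (ii).} Let $L^{-1}|x-y|\le|f(x)-f(y)|\le L|x-y|$. By symmetry, using $f^{-1}$ on $f(E)$, it suffices to prove $\dim_\Phi f(E)\le\dim_\Phi E$. This step is the main obstacle: $f$ maps a cover $\{U_i\}$ admissible at scale $r$ to sets $f(U_i\cap E)$ with diameters in $[L^{-1}\Phi(r),Lr]$, which is generally not a window $[\Phi(\rho),\rho]$. A naive rescaling would require a doubling-type condition on $\Phi$, such as $\Phi(Lr)\le C\Phi(r)$, which is not assumed.

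My approach avoids rescaling the scale parameter. Sets with diameter greater than $r$ are split into at most $C_n L^n$ pieces of diameter at most $r$. Sets with diameter less than $\Phi(r)$ are enlarged: replace $V$ by a superset of diameter exactly $\Phi(r)$, still containing $V$. The resulting cover is admissible at scale $r$, with diameters in $[\Phi(r),r]$, and each new diameter is comparable to the old one within a factor $L$. Therefore
\[
\sum|V_j|^s\le C_nL^{n+s}\sum|U_i|^s,
\]
and the constant is absorbed into $\epsilon$. Enlarging a set to a prescribed larger diameter is always possible in $\mathbb R^m$ by adjoining a point. If a split piece happens to fall below $\Phi(r)$ it is enlarged as well. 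Both the lower and upper definitions are then preserved, since the same scale $r$ is used.
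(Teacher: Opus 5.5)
Your proposal is correct, and for (ii) it takes a genuinely different route from the paper.

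\textbf{Monotonicity.} This is the paper's Step~1: a $\Phi_2$-admissible cover is $\Phi_1$-admissible. You run it through the infimum definitions rather than the $\log S^s_{r,\Phi}$ characterisation. You tacitly use that the defining sets are upward closed, which holds because $|U_i|\le r<1$.

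\textbf{Item (i).} The paper applies Step~1 to the endpoint gauges $\Phi\equiv 0$ and $\Phi(r)=r$. Both lie outside \eqref{eq:Phi-condition}, so the paper implicitly extends Step~1 and identifies those endpoints with the Hausdorff and upper box dimensions. Your direct proof of the three inequalities avoids that extension and is cleaner.

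\textbf{Item (ii).} The paper rescales the scale parameter. It pushes a $\Phi(r/L)$-admissible cover of $E$ forward to a cover of $f(E)$ admissible for $\Phi^{(L)}(r)=L^{-1}\Phi(r/L)$. It then asserts that $\Phi^{(L)}$ and $\Phi$ are comparable and that, by Step~1, this does not change the dimension. Neither assertion is supplied by what precedes it:
\begin{itemize}
\item comparability of $\Phi(r/L)$ with $\Phi(r)$ is a doubling-type hypothesis, which is the obstacle you identified and which \eqref{eq:Phi-condition} does not provide;
\item Step~1 only compares pointwise-ordered gauges, not constant multiples of one another;
\item applying the same argument to $f^{-1}$ yields $\dim_{\Phi^{(L)}}E\le\dim_\Phi f(E)$, not the reverse of the forward inequality.
\end{itemize}

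Your fixed-scale argument avoids all of this. You keep the same $r$ and the same $\Phi$, split images of diameter in $(r,Lr]$ into $O(L^m)$ pieces, and pad anything below $\Phi(r)$ up to diameter $\Phi(r)\le|U_i|$. This works for every admissible gauge with no regularity assumption. Since it uses only the upper Lipschitz bound, it also shows that Lipschitz images never increase $\underline{\dim}_\Phi$ or $\overline{\dim}_\Phi$. The padding step is exactly what would be needed to make the paper's ``comparable gauges give the same dimension'' claim rigorous.

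One small correction: the piece count should be $C_mL^m$, with $m$ the dimension of the target space.
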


\begin{proof}
We use the covering sum formulation
\[
S^s_{r,\Phi}(E)
:=\inf\Big\{\sum_i |U_i|^s:\{U_i\}\text{ covers }E,\
\Phi(r)\le |U_i|\le r\ \forall i\Big\},
\]
together with the characterisations
\[
\underline{\dim}_\Phi E
=
\sup\Big\{s:\liminf_{r\to0}\frac{\log S^s_{r,\Phi}(E)}{-\log r}\ge0\Big\},
\quad
\overline{\dim}_\Phi E
=
\sup\Big\{s:\limsup_{r\to0}\frac{\log S^s_{r,\Phi}(E)}{-\log r}\ge0\Big\}.
\]

\textbf{Step 1: Monotonicity in the control function $\Phi$.}
Assume that $\Phi_1(r)\le \Phi_2(r)$ for all sufficiently small $r$.
Fix such an $r$ and $s\in[0,n]$.

Any cover $\{U_i\}$ admissible for $\Phi_2$ at scale $r$ satisfies
$\Phi_2(r)\le |U_i|\le r$ for all $i$, and hence also
$\Phi_1(r)\le |U_i|\le r$.
Therefore every $\Phi_2$--admissible cover is also $\Phi_1$--admissible, and
\[
S^s_{r,\Phi_1}(E)\le S^s_{r,\Phi_2}(E)
\quad\text{for all sufficiently small }r.
\]
Taking logarithms, dividing by $-\log r$, and passing to $\liminf$ (or
$\limsup$) yields
\[
\underline{\dim}_{\Phi_1}E\le \underline{\dim}_{\Phi_2}E
\quad\text{and}\quad
\overline{\dim}_{\Phi_1}E\le \overline{\dim}_{\Phi_2}E,
\]
as claimed.

\medskip\noindent
\textbf{Step 2: Proof of (i).}
For any admissible $\Phi$ and all sufficiently small $r$, we have $0\le \Phi(r)\le r$, so admissible covers at scale $r$ interpolate between the Hausdorff-type case (no lower bound on diameters) and the box-type case (all covering sets of diameter $r$).

By Step~1, enlarging the class of admissible covers (making $\Phi$ smaller) can only decrease the dimension, while shrinking it (making $\Phi$ larger) can only increase the dimension.

If $\Phi\equiv 0$, then $S^s_{r,\Phi}(E)$ reduces to the usual Hausdorff content at scale $r$, and letting $r\to0$ yields
$\underline{\dim}_\Phi E=\underline{\dim}_H E$.
If $\Phi(r)=r$, then admissible covers satisfy $|U_i|=r$ and
$S^s_{r,\Phi}(E)=N_r(E)\,r^s$, so the critical exponent is
$\overline{\dim}_B E$.

Applying Step~1 to the chain $0\le \Phi\le r$ gives
\[
\underline{\dim}_H E
\le \underline{\dim}_\Phi E
\le \overline{\dim}_\Phi E
\le \overline{\dim}_B E,
\]
which proves (i).

\medskip\noindent
\textbf{Step 3: Proof of (ii).}
Let $f:\mathbb R^n\to\mathbb R^m$ be bi-Lipschitz, so there exists $L\ge1$ such that
$L^{-1}|U|\le |f(U)|\le L|U|$ for all sets $U$.
Define $\Phi^{(L)}(r):=L^{-1}\Phi(r/L)$.

If $\{U_i\}$ is a $\Phi(r/L)$--admissible cover of $E$, then $\{f(U_i)\}$ is a
$\Phi^{(L)}$--admissible cover of $f(E)$ at scale $r$, and
\[
S^s_{r,\Phi^{(L)}}(f(E))\le L^s\,S^s_{r/L,\Phi}(E).
\]
Applying the same argument to $f^{-1}$ gives the reverse inequality, so
$\underline{\dim}_{\Phi^{(L)}} f(E)=\underline{\dim}_{\Phi}E$ and
$\overline{\dim}_{\Phi^{(L)}} f(E)=\overline{\dim}_{\Phi}E$.

Since $\Phi$ is admissible, $\Phi^{(L)}$ and $\Phi$ are comparable up to
uniform constants for small $r$, and by monotonicity in $\Phi$ (Step~1) this
does not affect the dimension.
Hence
\[
\underline{\dim}_\Phi f(E)=\underline{\dim}_\Phi E
\quad\text{and}\quad
\overline{\dim}_\Phi f(E)=\overline{\dim}_\Phi E,
\]
which proves (ii).
\end{proof}

The flexibility of the $\Phi$--framework allows one to detect scale--dependent features that cannot be captured by power--law restrictions alone. In particular, while $\theta$--intermediate dimensions impose a fixed self-similar relationship between admissible scales, general gauge functions $\Phi$ permit finer control over how coverings interact with sparse or irregular scale distributions. The following example illustrates that $\Phi$--intermediate dimensions can distinguish sets that are indistinguishable from the perspective of $\theta$--intermediate dimensions, highlighting the strictly greater resolving power of the $\Phi$--approach.

\begin{example}\label{ex:Phi-refines-theta}
Fix $p>0$ and define the polynomial sequence set
\[
F_p := \{0\}\cup\{n^{-p}: n\in\mathbb{N}\}\subset[0,1],
\qquad
E:=F_p\times F_p\subset[0,1]^2.
\]
Then $\dim_H E=0$ while
\[
\underline{\dim}_B E=\overline{\dim}_B E=\frac{2}{p+1}.
\]
Moreover, in contrast to the $\theta$--intermediate theory (where this example
is known to exhibit a jump at $\theta=0$), the $\Phi$--framework allows one to
recover genuine interpolation between these two endpoint values by choosing
appropriate admissible gauges $\Phi$.
\end{example}

\begin{proof}
We briefly justify the Hausdorff and box dimensions, and then explain the
contrast between the $\theta$-- and $\Phi$--frameworks.

\smallskip\noindent
\emph{Hausdorff and box dimensions.}
The set $F_p$ is countable, hence $\dim_H F_p=0$, and therefore
$\dim_H E=\dim_H(F_p\times F_p)=0$.
A standard spacing argument shows that the lower and upper box dimensions of
$F_p$ both equal $1/(p+1)$, reflecting the polynomial rate at which the points
accumulate at $0$.
Since box dimension is additive for products of bounded sets, this yields
\[
\underline{\dim}_B E=\overline{\dim}_B E
=\underline{\dim}_B F_p+\underline{\dim}_B F_p=\frac{2}{p+1}.
\]

\smallskip\noindent
\emph{Why this illustrates that $\Phi$ refines the $\theta$--family.}
For this set $E$, the $\theta$--intermediate dimensions exhibit a collapse: they
take the Hausdorff value at $\theta=0$ but jump immediately to the box dimension
for every $\theta>0$.
This behaviour is a direct consequence of the fixed power--law nature of the
admissible scale window $[\delta^{1/\theta},\delta]$.

Indeed, for the polynomial sequence $F_p$, the covering strategies that realise the box dimension are dominated by a single characteristic scale comparable to $\delta^{1/(p+1)}$.
For every $\theta>0$, this scale lies within the admissible window
$[\delta^{1/\theta},\delta]$ once $\delta$ is sufficiently small.
As a result, the $\theta$--constraint does not exclude any box-optimal coverings,
and the $\theta$--intermediate dimensions coincide with the box dimension for all
$\theta>0$.
Consequently, the function $\theta\mapsto \dim_\theta E$ is discontinuous at
$\theta=0$, and the $\theta$--family is unable to detect any finer
scale--dependent structure of $E$.

The $\Phi$--intermediate framework removes this rigidity by allowing the lower
bound on admissible diameters to vary with $r$ in a non--power--law manner.
By choosing $\Phi(r)$ to decay more slowly than any power of $r$—for example,
with logarithmic corrections—one can force admissible covers to operate within a
much narrower and scale-sensitive window.
In particular, such choices of $\Phi$ can exclude the single-scale coverings
that realise the box dimension and instead compel coverings to reflect the
sparse distribution of scales inherent in the construction of $F_p$.
This additional flexibility allows $\Phi$--intermediate dimensions to detect
scale--dependent sparsity that is invisible to all $\theta$--intermediate
dimensions.

More generally, Banaji’s theory of generalised intermediate dimensions
\cite{Banaji2023} shows that for \emph{every} compact set $F$ and every target
value $s\in[\dim_H F,\overline{\dim}_B F]$ there exists an admissible gauge
$\Phi_s$ with $\Phi_s(r)/r\to0$ such that
\[
\underline{\dim}_{\Phi_s}F=\overline{\dim}_{\Phi_s}F=s.
\]
Applied to the present example, this guarantees that although the $\theta$--family
cannot interpolate between $0$ and $2/(p+1)$, the $\Phi$--intermediate dimensions
recover a genuine continuum of values between these endpoints by a suitable
choice of $\Phi$.
\end{proof}

\noindent In Section \ref{sec:capacity}, we show that the lower and upper $\Phi$--intermediate dimensions, $\underline{\dim}_\Phi E$ and $\overline{\dim}_\Phi E$, admit representations in terms of capacities of sets $E \subset \mathbb{R}^n$ with respect to certain kernels. In Section \ref{sec:projection}, we then demonstrate that, by varying a parameter within these kernels, one obtains the corresponding intermediate dimensions of the orthogonal projections of $E$ onto almost all $m$--dimensional subspaces.

\section{Capacity Characterization of $\Phi$--dimensions}\label{sec:capacity}

\subsection{Motivation and overview}

The goal of this section is to encode the scale restrictions defining $\Phi$--intermediate dimensions into a family of potential kernels, and to show that the resulting $\Phi$--capacities determine both $\underline{\dim}_{\Phi}E$ and $\overline{\dim}_{\Phi}E$. We first introduce the kernels and energies, then define dimension profiles via capacities, and finally present a toolkit for obtaining lower bounds together with a comparison between capacities and covering sums.

\subsection{$\Phi$--kernels and the associated capacity framework}

Let $1 \le m \le n$ and $s \in [0,m]$. For each $r>0$ define the radial kernel
$\varphi_{s,m}^{r,\Phi}:\mathbb{R}^n\to[0,\infty)$ by
\begin{equation}\label{eq:phi-kernel}
\varphi_{s,m}^{r,\Phi}(x)
=
\begin{cases}
1, & \|x\| < \Phi(r), \\[1ex]
\left(\dfrac{\Phi(r)}{\|x\|}\right)^{s},
& \Phi(r) \le \|x\| < r, \\[2ex]
\left(\dfrac{\Phi(r)}{r}\right)^{s}
\left(\dfrac{r}{\|x\|}\right)^{m},
& \|x\| \ge r .
\end{cases}
\end{equation}
where $\|\cdot\|$ denotes the Euclidean norm on $\mathbb{R}^n$.

Then $\varphi_{s,m}^{r,\Phi}$ is continuous, radial, and non-increasing in $\|x\|$.
It is constant on $\|x\|<\Phi(r)$, decays like $\|x\|^{-s}$ on the intermediate
region $\Phi(r)\le\|x\|<r$, and has $\|x\|^{-m}$ decay for $\|x\|\ge r$ up to the
normalising factor $(\Phi(r)/r)^s$.

The choice of tail exponent $m$ is adapted to the integral-geometric averaging
over $G(n,m)$ in Kaufman-Mattila type projection arguments: it matches the
natural $m$--dimensional decay that appears when projecting differences and
integrating over directions.

When $\Phi(r)=r^{1/\theta}$, the kernel $\varphi_{s,m}^{r,\Phi}$ agrees (on the
region $\|x\|<r$) with the $\theta$--kernel used in \cite[Section~3]{BFF2021}.
In the formal endpoint $\Phi(r)=r$ (which lies outside \eqref{eq:Phi-condition}
but is useful for comparison), \eqref{eq:phi-kernel} reduces to
\[
\varphi_{s,m}^{r,\Phi}(x)=\min\{1,(r/\|x\|)^m\},
\]
which is the kernel used in \cite{Falconer2019} for box-type dimension profiles.
Thus the family \eqref{eq:phi-kernel} unifies the kernels corresponding to
Hausdorff, $\theta$--intermediate, and box dimensions, while allowing genuinely
non-power--law scale windows.

These kernels will be used to define energies and capacities whose scaling behaviour encodes the size of $E$ under the $\Phi$--intermediate covering restrictions.

\subsection{$\Phi$--capacities and equilibrium measures}
Let $M(E)$ denote the set of Borel probability measures supported on $E$.
For $\mu \in M(E)$, we define the \textbf{$\Phi$--potential} of $\mu$ at a point
$x\in\mathbb{R}^n$ by
\[
\psi_{s,m}^{\,r,\Phi}(\mu,x)
\;:=\;
\int \varphi_{s,m}^{r,\Phi}(x-y)\,d\mu(y),
\]
and the \textbf{$\Phi$--energy} of $\mu$ by
\[
\mathcal{I}_{s,m}^{\,r,\Phi}(\mu)
\;:=\;
\int \psi_{s,m}^{\,r,\Phi}(\mu,x)\,d\mu(x)
\;=\;
\iint \varphi_{s,m}^{r,\Phi}(x-y)\,d\mu(x)\,d\mu(y).
\]
We then define the \textbf{$\Phi$--capacity} of $E$ at scale $r$ by
\begin{equation}\label{eq:capacity-def}
C_{s,m}^{\,r,\Phi}(E)
\;:=\;
\Big( \inf_{\mu \in M(E)} \mathcal{I}_{s,m}^{\,r,\Phi}(\mu) \Big)^{-1}.
\end{equation}
That is, $C_{s,m}^{\,r,\Phi}(E)$ is the reciprocal of the minimal $\Phi$--energy achievable by a probability measure on $E$. Since $0\le \varphi_{s,m}^{r,\Phi}\le 1$, we have $\mathcal{I}_{s,m}^{\,r,\Phi}(\mu)\le 1$ for every $\mu\in M(E)$, hence $C_{s,m}^{\,r,\Phi}(E)\ge 1$ whenever $E\neq\emptyset$. In particular, the capacity is finite and strictly positive for non-empty $E$.

\medskip

The minimizer in \eqref{eq:capacity-def} is called an \emph{equilibrium measure}. We will use the following standard existence and "equipotential" property.

\begin{lemma}\label{lem:equilibrium}
Let $E\subset\mathbb{R}^n$ be non-empty and compact, let $1\le m\le n$,
$s\in[0,m]$, and $r>0$. Then there exists $\mu^\ast\in M(E)$ such that
\[
\mathcal{I}^{\,r,\Phi}_{s,m}(\mu^\ast)
=
\inf_{\mu\in M(E)} \mathcal{I}^{\,r,\Phi}_{s,m}(\mu)
=
\bigl(C_{s,m}^{\,r,\Phi}(E)\bigr)^{-1}.
\]
Moreover, if
\[
U^{r,\Phi}_{s,m}(x)
:=
\int \varphi^{r,\Phi}_{s,m}(x-y)\,d\mu^\ast(y),
\]
then
\[
U^{r,\Phi}_{s,m}(x)=\mathcal{I}^{\,r,\Phi}_{s,m}(\mu^\ast)
\quad\text{for }\mu^\ast\text{-a.e.\ }x\in E,
\qquad
U^{r,\Phi}_{s,m}(x)\ge\mathcal{I}^{\,r,\Phi}_{s,m}(\mu^\ast)
\quad\text{for all }x\in E.
\]
\end{lemma}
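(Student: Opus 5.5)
The plan is the standard two-part argument of potential theory: weak-$*$ compactness gives existence of a minimiser, and a first-variation argument gives the equipotential property. The only features of the kernel $\varphi:=\varphi^{r,\Phi}_{s,m}$ we need are read off from \eqref{eq:phi-kernel}. It is continuous, bounded by $1$ and strictly positive. Fix the scale $r$ throughout, and write $I(\mu)=\iint\varphi(x-y)\,d\mu(x)\,d\mu(y)$ and $I^\ast:=\inf_{\mu\in M(E)}I(\mu)$.

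\textbf{Existence.} Since $E$ is compact, $M(E)$ is weak-$*$ compact by Prokhorov/Banach--Alaoglu. Take a minimising sequence $\mu_k$ and pass to a subsequence converging weak-$*$ to some $\mu^\ast\in M(E)$. Then $\mu_k\times\mu_k\to\mu^\ast\times\mu^\ast$ weak-$*$ on the compact set $E\times E$. The function $(x,y)\mapsto\varphi(x-y)$ is continuous and bounded there, so $I(\mu_k)\to I(\mu^\ast)$. Hence $I(\mu^\ast)=I^\ast=(C^{r,\Phi}_{s,m}(E))^{-1}$. Because the kernel is continuous, we get genuine continuity of $I$ rather than mere lower semicontinuity, so this step is routine.

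\textbf{Lower bound $U\ge I^\ast$ on $E$.} Write $U:=U^{r,\Phi}_{s,m}$ and use a variational perturbation. For $x_0\in E$ and $t\in(0,1)$, set $\mu_t=(1-t)\mu^\ast+t\delta_{x_0}\in M(E)$. By symmetry of $\varphi$,
\[
I(\mu_t)=(1-t)^2I^\ast+2t(1-t)U(x_0)+t^2\varphi(0).
\]
Minimality gives $I(\mu_t)\ge I^\ast$. Dividing by $t$ and letting $t\to0$ yields $-2I^\ast+2U(x_0)\ge0$, so $U(x_0)\ge I^\ast$ for every $x_0\in E$. If one prefers not to use Dirac masses, the same computation works with $\mu_t=(1-t)\mu^\ast+t\nu$ for arbitrary $\nu\in M(E)$. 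This gives $\int U\,d\nu\ge I^\ast$, and taking $\nu=\delta_{x_0}$ recovers the pointwise statement; note that $U$ is continuous by dominated convergence.

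\textbf{Equality $\mu^\ast$-a.e.} We have $\int U\,d\mu^\ast=I(\mu^\ast)=I^\ast$, while $U\ge I^\ast$ on $E\supset\operatorname{supp}\mu^\ast$. So $U-I^\ast$ is a non-negative function with zero $\mu^\ast$-integral, and therefore $U=I^\ast$ $\mu^\ast$-a.e.

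\textbf{Main obstacle.} The lower-bound step is the only delicate point. The classical Frostman/Gauss theory gives $U\ge I^\ast$ only quasi-everywhere, because singular kernels can have points of infinite self-energy. Here the kernel is bounded ($\varphi(0)=1$), so the Dirac perturbation is admissible: its energy is finite and the $t^2$ term is harmless. That is why the inequality holds at every point of $E$ and not merely outside a set of zero capacity. I would state this boundedness explicitly, since it is what distinguishes the result from the singular-kernel case.
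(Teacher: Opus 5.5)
Your proof is correct and follows the same route as the paper's Appendix~A. Existence comes from weak-$\ast$ compactness of $M(E)$ together with continuity of the energy. The inequality $U\ge\mathcal{I}^{\,r,\Phi}_{s,m}(\mu^\ast)$ comes from the first variation along $(1-t)\mu^\ast+t\nu$, and $\mu^\ast$-a.e.\ equality from the zero-integral argument. The only difference is that you take $\nu=\delta_{x_0}$ directly, which is legitimate because the kernel is bounded, and this lets you skip the paper's contradiction step that uses continuity of $U$ on a neighbourhood of $x_0$.
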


\begin{proof}
See Appendix~A.
\end{proof}

\subsection{Dimension profiles via $\Phi$--capacities}

As we will see, the $\Phi$--capacities introduced above are closely related to the covering sums $S^s_{r,\Phi}(E)$ considered in Section~2. The following lemma, which parallels Lemma~\ref{lem:Phi-critical}, establishes the corresponding monotonicity properties for capacities and will allow us to define critical exponents in the capacity setting.

\begin{lemma}\label{lem:Phi-capacity-monotone}
Let $E\subset\mathbb{R}^n$ be compact, let $m\in\{1,\dots,n\}$, and let
$\Phi:(0,1]\to(0,1]$ satisfy \eqref{eq:Phi-condition}. Fix $0<r<1$ and set
$\rho=\Phi(r)$. Then for all $0\le t\le s\le m$,
\begin{equation}\label{eq:Phi-lemma32}
-(s-t)\ \le\
\Big(\frac{\log C^{\,r,\Phi}_{s,m}(E)}{-\log \rho}-s\Big)
-
\Big(\frac{\log C^{\,r,\Phi}_{t,m}(E)}{-\log \rho}-t\Big)
\ \le\
-\theta_\Phi(r)\,(s-t),
\end{equation}
where $\theta_\Phi(r)=\frac{\log r}{\log\rho}\in(0,1]$.
In particular, the functions
\[
s\longmapsto
\liminf_{r\to0}\Big(\frac{\log C^{\,r,\Phi}_{s,m}(E)}{-\log\Phi(r)}-s\Big),
\qquad
s\longmapsto
\limsup_{r\to0}\Big(\frac{\log C^{\,r,\Phi}_{s,m}(E)}{-\log\Phi(r)}-s\Big)
\]
are non-increasing on $[0,m]$.
\end{lemma}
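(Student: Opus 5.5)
The plan is to reduce \eqref{eq:Phi-lemma32} to a two-sided pointwise comparison between the kernels $\varphi^{r,\Phi}_{s,m}$ and $\varphi^{r,\Phi}_{t,m}$. This parallels the proof of Lemma~\ref{lem:Phi-critical}, where the comparison was between covering sums. Fix $0<r<1$ small enough that $\rho=\Phi(r)\le r<1$, so that $-\log\rho>0$. The claim to establish is that for $0\le t\le s\le m$ and all $x\in\mathbb{R}^n$,
\[
\Big(\frac{\rho}{r}\Big)^{s-t}\varphi^{r,\Phi}_{t,m}(x)\ \le\ \varphi^{r,\Phi}_{s,m}(x)\ \le\ \varphi^{r,\Phi}_{t,m}(x).
\]

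I will verify this region by region using \eqref{eq:phi-kernel}.
\begin{itemize}
\item On $\|x\|<\rho$ both kernels equal $1$, and $(\rho/r)^{s-t}\le 1$.
\item On $\rho\le\|x\|<r$ we can write $\varphi_s=\varphi_t\,(\rho/\|x\|)^{s-t}$. Since $\rho/\|x\|\in(\rho/r,1]$, the factor $(\rho/\|x\|)^{s-t}$ lies in $[(\rho/r)^{s-t},1]$.
\item On $\|x\|\ge r$ we have exactly $\varphi_s=(\rho/r)^{s-t}\varphi_t$.
\end{itemize}

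Next I integrate the comparison against $\mu\otimes\mu$ for an arbitrary $\mu\in M(E)$. This gives $(\rho/r)^{s-t}\mathcal{I}_t(\mu)\le\mathcal{I}_s(\mu)\le\mathcal{I}_t(\mu)$. Taking the infimum over $\mu$ and then reciprocals via \eqref{eq:capacity-def} yields
\[
C^{\,r,\Phi}_{t,m}(E)\le C^{\,r,\Phi}_{s,m}(E)\le (r/\rho)^{s-t}C^{\,r,\Phi}_{t,m}(E).
\]
All quantities here are finite and at least $1$, as noted after \eqref{eq:capacity-def}. Compactness is not really needed; Lemma~\ref{lem:equilibrium} is not required.

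Now take logarithms and divide by $-\log\rho>0$. This gives
\[
0\le \frac{\log C_s-\log C_t}{-\log\rho}\le (s-t)\,\frac{\log r-\log\rho}{-\log\rho}=(s-t)\bigl(1-\theta_\Phi(r)\bigr).
\]
Subtracting $s-t$ throughout gives exactly \eqref{eq:Phi-lemma32}.

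For the final assertion, the right-hand bound $-\theta_\Phi(r)(s-t)\le 0$ shows that for each small $r$ the map $s\mapsto \log C_s/(-\log\rho)-s$ is non-increasing. Pointwise monotone inequalities pass to $\liminf$ and $\limsup$ as $r\to0$, so both limiting functions are non-increasing on $[0,m]$.

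The only mildly delicate step is the middle region of the kernel comparison, together with checking that $\rho<1$ for small $r$ so that the sign of the division is correct. Everything else is bookkeeping.
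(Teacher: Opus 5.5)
Your proof is correct and follows the paper's argument (Appendix~B) essentially step for step. Both use the two-sided pointwise comparison $(\rho/r)^{s-t}\varphi^{r,\Phi}_{t,m}\le\varphi^{r,\Phi}_{s,m}\le\varphi^{r,\Phi}_{t,m}$ (the paper writes the constant as $\rho^{(1-\theta_\Phi(r))(s-t)}$), integrate against $\mu\otimes\mu$, pass to infima and reciprocals without using equilibrium measures, and divide by $-\log\rho>0$. Your explicit restriction to $r$ small enough that $\rho\le r<1$ makes precise a requirement that the statement leaves implicit.
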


\begin{proof}
See Appendix~B.
\end{proof}

\medskip
The following definition introduces $\Phi$--dimension profiles as the critical exponents governing the scaling of $\Phi$--capacities.

\begin{definition}\label{def:Phi-dim-profiles}
Let $E\subset\mathbb{R}^n$ be bounded and non-empty, let
$\Phi:(0,1]\to(0,1]$ satisfy \eqref{eq:Phi-condition}, and let $1\le m\le n$.
\begin{enumerate}
\item
The \emph{lower $\Phi$--intermediate dimension profile} of $E$ in dimension $m$ is
\[
\underline{\dim}^{\,m}_{\Phi} E
:=
\sup\left\{
s\in[0,m]:
\liminf_{r\to0}\frac{\log C^{\,r,\Phi}_{s,m}(E)}{-\log \Phi(r)} \ge s
\right\}.
\]
\item
The \emph{upper $\Phi$--intermediate dimension profile} of $E$ in dimension $m$ is
\[
\overline{\dim}^{\,m}_{\Phi} E
:=
\sup\left\{
s\in[0,m]:
\limsup_{r\to0}\frac{\log C^{\,r,\Phi}_{s,m}(E)}{-\log \Phi(r)} \ge s
\right\}.
\]
\end{enumerate}
\end{definition}

These definitions are well posed by Lemma~\ref{lem:Phi-capacity-monotone}, which ensures that the functions $s\mapsto \liminf_{r\to0}\frac{\log C^{\,r,\Phi}_{s,m}(E)}{-\log\Phi(r)}-s$ and $s\mapsto \limsup_{r\to0}\frac{\log C^{\,r,\Phi}_{s,m}(E)}{-\log\Phi(r)}-s$ are non-increasing.

\subsection{Toolkit: methods for obtaining lower bounds}
\label{subsubsec:lowerbound-tools}

While $\Phi$--capacities provide an intrinsic characterisation of
$\Phi$--intermediate dimensions, they are not directly computable.
In applications one therefore needs effective criteria that convert
geometric or analytic information into explicit lower bounds.

\medskip
\noindent\textbf{Tool I:}
This is a scale-local conversion principle: bounded truncated $\Phi$--potentials force
large admissible covering sums at that scale. It will be used repeatedly (especially for
projected measures) to extract lower bounds from potential estimates.

\begin{lemma}
\label{lem:pot-cover-tool}
Let $F \subset \mathbb{R}^m$ be a bounded set, and let $\nu \in M(F)$ be a
probability measure on $F$. Fix $r>0$ and $s\ge 0$. Suppose that
\[
\int_F \widetilde{\varphi}^{\,r,\Phi}_s(x-y)\,d\nu(y) \;\le\; \Gamma
\qquad \text{for every } x \in F,
\]
for some constant $\Gamma>0$, where the truncated kernel is
\[
\widetilde{\varphi}^{\,r,\Phi}_s(u)
=
\begin{cases}
1, & \|u\|<\Phi(r),\\[0.8ex]
\big(\dfrac{\Phi(r)}{\|u\|}\big)^{s}, & \Phi(r)\le \|u\|<r,\\[1.2ex]
0, & \|u\|\ge r.
\end{cases}
\]
Then any admissible cover $\{U_i\}$ of $F$ with $\Phi(r)\le |U_i|\le r$
(where $|U_i|$ denotes the diameter of $U_i$) satisfies
\[
\sum_i |U_i|^s \;\ge\; \frac{\Phi(r)^s}{\Gamma}\,\nu(F).
\]
\end{lemma}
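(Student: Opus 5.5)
The plan is a mass-distribution argument at the single scale $r$: I will show that each admissible covering set carries $\nu$-mass at most $\Gamma\,\Phi(r)^{-s}|U_i|^s$, then sum over the cover.

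\textbf{Step 1 (reduction).} Discard every $U_i$ that does not meet $F$. For each remaining $U_i$, choose a point $x_i\in U_i\cap F$. Then $U_i\subset \overline{B}(x_i,|U_i|)$, so every $y\in U_i$ satisfies $\|x_i-y\|\le |U_i|\le r$.

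\textbf{Step 2 (pointwise kernel lower bound).} Fix $y\in U_i$ with $\|x_i-y\|<r$. I claim that
\[
\widetilde{\varphi}^{\,r,\Phi}_s(x_i-y)\ \ge\ \Big(\frac{\Phi(r)}{|U_i|}\Big)^s .
\]
\emph{Case $\|x_i-y\|<\Phi(r)$.} The kernel equals $1$, and $1\ge(\Phi(r)/|U_i|)^s$ because $|U_i|\ge\Phi(r)$.
\emph{Case $\Phi(r)\le\|x_i-y\|<r$.} The kernel equals $(\Phi(r)/\|x_i-y\|)^s$, which is at least $(\Phi(r)/|U_i|)^s$ because $\|x_i-y\|\le|U_i|$.

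\textbf{Step 3 (mass bound).} Integrating the bound from Step 2 over $U_i$ and applying the hypothesis at the point $x_i\in F$ gives
\[
\Big(\frac{\Phi(r)}{|U_i|}\Big)^s\nu(U_i)\ \le\ \int_F \widetilde{\varphi}^{\,r,\Phi}_s(x_i-y)\,d\nu(y)\ \le\ \Gamma .
\]
Hence $\nu(U_i)\le \Gamma\,\Phi(r)^{-s}|U_i|^s$.

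\textbf{Step 4 (summation).} Since the $U_i$ cover $F$, countable subadditivity gives
\[
\nu(F)\le\sum_i\nu(U_i)\le \Gamma\,\Phi(r)^{-s}\sum_i|U_i|^s ,
\]
which rearranges to the stated inequality.

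\textbf{Main obstacle: the boundary case $\|x_i-y\|=r$.} Step 3 silently assumes Step 2 holds for all of $U_i$. Step 2 can fail only when $|U_i|=r$ and $\nu$ charges the sphere $\{y:\|x_i-y\|=r\}\cap U_i$, because there the truncated kernel drops to $0$. I would deal with this as follows.
\begin{enumerate}[label=(\roman*)]
\item First, note that only countably many radii can carry positive $\nu$-mass on spheres about a fixed centre. So, after a slight perturbation of the cover (replacing $U_i$ by nearby sets whose diameters are still in $[\Phi(r),r]$), one can arrange that the sphere at distance exactly $r$ is $\nu$-null.
\item If that perturbation is awkward, split any $U_i$ with $|U_i|=r$ into a bounded number $c_m$ of pieces of diameter $<r$ (each still at least $\Phi(r)$ for small $r$, since $\Phi(r)/r\to0$). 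This costs only a dimensional constant, which is harmless for every later dimension estimate.
\item Alternatively, read the middle regime of the kernel as closed at $r$, which is what the continuity of $\varphi^{r,\Phi}_{s,m}$ suggests. In that case Step 2 holds verbatim and no loss occurs.
\end{enumerate}
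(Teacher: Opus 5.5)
Your Steps 1--4 are the paper's argument in a slightly more direct form. The paper integrates the hypothesis over $x\in U_i$, compares with $\iint_{U_i\times U_i}$, and then cancels a factor $\nu(U_i)$. You instead evaluate the potential at one point $x_i\in U_i\cap F$, which is exactly what the paper itself does in the lower bound of Proposition~\ref{prop:cap-cover}. You also correctly use $\nu(F)\le\sum_i\nu(U_i)$ where the paper writes an equality.

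The boundary case you single out is a genuine defect, and it is a defect of the statement, not only of the proof. The paper asserts the kernel bound for all $x,y\in U_i$ ``by the definition on $\{\|u\|<r\}$'' and never addresses $\|x-y\|=r$. A counterexample:
\begin{itemize}
\item Take $F=\{0,re_1\}\subset\mathbb{R}^m$ with $e_1$ a unit vector, $\nu=\tfrac12(\delta_0+\delta_{re_1})$ and $s=0$, so that $\widetilde{\varphi}^{\,r,\Phi}_0=\mathbf{1}_{\{\|u\|<r\}}$.
\item The potential equals $\tfrac12$ at both points, so $\Gamma=\tfrac12$ is admissible.
\item Yet the single segment joining the two points is an admissible cover with $\sum_i|U_i|^0=1<2=\Phi(r)^0\nu(F)/\Gamma$.
\item For $s>0$ the same configuration fails whenever $\Phi(r)>2^{-1/s}r$, and nothing in the hypotheses prevents this at a fixed scale.
\end{itemize}

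Consequently your remedy (i) cannot work. The radius $r$ is fixed by the kernel rather than chosen, so the countability of charged spheres is irrelevant. In this example every admissible set containing both atoms has them at distance exactly $r$, whichever centre you pick. Trading the given cover for a different one requires accounting for the change in $\sum_i|U_i|^s$, which is what (ii) does.

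Remedies (ii) and (iii) are the right fixes, but each proves a modified statement, and you should say so explicitly.
\begin{itemize}
\item Version (ii) gives $\sum_i|U_i|^s\ge\Phi(r)^s\nu(F)/(c_m\Gamma)$. Your claim that the pieces still have diameter at least $\Phi(r)$ is false (a piece may be a single point), but you do not need it. Any piece $P\subset U_i$ meeting $F$ with $|P|<r$ satisfies $\nu(P)\le\Gamma(r/\Phi(r))^s=\Gamma(|U_i|/\Phi(r))^s$ by your own Step 2--3 argument.
\item Version (iii) is a change of definition, not a reading suggested by continuity: the truncated kernel jumps from $(\Phi(r)/r)^s$ to $0$ at $\|u\|=r$ under either convention. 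It costs nothing downstream, because $\gamma_{n,m}\{V:\|\pi_V x\|=r\}=0$ for every $x$, so the averages in Lemma~\ref{lem:kernel-average} are unchanged.
\end{itemize}
Either version suffices for Corollary~\ref{cor:uniform-potential-to-dimension} and for the almost-sure projection lower bound, where multiplicative constants are irrelevant.
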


\begin{proof}
Let $\{U_i\}$ be an arbitrary cover of $F$ with $\Phi(r)\le |U_i|\le r$.
Fix $i$. For any $x,y\in U_i$ we have $\|x-y\|\le |U_i|\le r$, and hence, by the
definition of $\widetilde{\varphi}^{\,r,\Phi}_s$ on $\{\|u\|<r\}$,
\[
\widetilde{\varphi}^{\,r,\Phi}_s(x-y)
\ge
\Bigl(\frac{\Phi(r)}{|U_i|}\Bigr)^s.
\]
Therefore
\[
\iint_{U_i\times U_i}
\widetilde{\varphi}^{\,r,\Phi}_s(x-y)\,d\nu(x)\,d\nu(y)
\ge
\Bigl(\frac{\Phi(r)}{|U_i|}\Bigr)^s \nu(U_i)^2.
\]

On the other hand, the hypothesis gives
\[
\int_F \widetilde{\varphi}^{\,r,\Phi}_s(x-y)\,d\nu(y)\le \Gamma
\qquad\text{for all }x\in F.
\]
Integrating over $x\in U_i$ and using Fubini yields
\[
\iint_{U_i\times F}
\widetilde{\varphi}^{\,r,\Phi}_s(x-y)\,d\nu(x)\,d\nu(y)
\le
\Gamma\,\nu(U_i).
\]
Since $U_i\times U_i \subset U_i\times F$, we obtain
\[
\iint_{U_i\times U_i}
\widetilde{\varphi}^{\,r,\Phi}_s(x-y)\,d\nu(x)\,d\nu(y)
\le
\Gamma\,\nu(U_i).
\]
Combining gives
\[
\Bigl(\frac{\Phi(r)}{|U_i|}\Bigr)^s \nu(U_i)^2
\le
\Gamma\,\nu(U_i).
\]
If $\nu(U_i)>0$, cancelling one factor of $\nu(U_i)$ yields
\[
\nu(U_i)\le \Gamma\Bigl(\frac{|U_i|}{\Phi(r)}\Bigr)^s;
\]
if $\nu(U_i)=0$, the inequality is trivial.
Summing over $i$ and using $\sum_i \nu(U_i)=\nu(F)$, we obtain
\[
\nu(F)
\le
\Gamma\sum_i \Bigl(\frac{|U_i|}{\Phi(r)}\Bigr)^s,
\]
which rearranges to the desired bound.
\end{proof}

\medskip
Applying Tool~I uniformly across all sufficiently small scales yields a dimension-level statement.
This is the form that is typically invoked in later arguments.

\begin{corollary}
\label{cor:uniform-potential-to-dimension}
Let $F\subset\mathbb{R}^m$ be bounded and let $\Phi$ be admissible.
Fix $s\in[0,m]$.
Suppose there exist constants $r_0>0$ and $M>0$, and a probability measure $\nu\in M(F)$ such that
for all $0<r\le r_0$,
\begin{equation}\label{eq:uniform-truncated-potential}
\int_F \widetilde{\varphi}^{\,r,\Phi}_s(x-y)\,d\nu(y) \le M
\qquad\text{for every }x\in F.
\end{equation}
Then
\[
\underline{\dim}_{\Phi}(F)\ \ge\ s
\qquad\text{and}\qquad
\overline{\dim}_{\Phi}(F)\ \ge\ s.
\]
\end{corollary}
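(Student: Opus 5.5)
The plan is to apply Lemma~\ref{lem:pot-cover-tool} at every scale $0<r\le r_0$ with $\Gamma=M$ and $\nu(F)=1$, and then read off the dimension bound from the covering-sum characterisations \eqref{eq:lower-dim-cover}--\eqref{eq:upper-dim-cover}. For each such $r$, the hypothesis \eqref{eq:uniform-truncated-potential} is exactly the hypothesis of Tool~I. It therefore gives, for every admissible cover $\{U_i\}$ of $F$ with $\Phi(r)\le|U_i|\le r$,
\[
\sum_i |U_i|^s\ \ge\ \frac{\Phi(r)^s}{M}.
\]
Taking the infimum over such covers yields $S^s_{r,\Phi}(F)\ge M^{-1}\Phi(r)^s$ for all $0<r\le r_0$.

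This bound alone is too weak. It only gives
\[
\frac{\log S^s_{r,\Phi}(F)}{-\log r}\ \ge\ -s\,\theta_\Phi(r)^{-1}-\frac{\log M}{-\log r},
\]
which is negative, and possibly unboundedly so when $\Phi(r)$ is much smaller than powers of $r$. So I will not stop here. I will exploit the fact that the potential bound holds at the exponent $s$, and that $\widetilde\varphi^{\,r,\Phi}_t\le\widetilde\varphi^{\,r,\Phi}_s$ for $t\le s$, since $\Phi(r)/\|u\|\le 1$ on the middle region. Hence the hypothesis also holds at every exponent $t<s$ with the same constant $M$.

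The key is then to sharpen Tool~I's per-set estimate. From the proof of Lemma~\ref{lem:pot-cover-tool} one has
\[
\nu(U_i)\le M\bigl(|U_i|/\Phi(r)\bigr)^s.
\]
For a lower bound on $\sum_i|U_i|^t$ with $t<s$, write
\[
|U_i|^t = |U_i|^s\,|U_i|^{t-s}\ \ge\ |U_i|^s\,r^{t-s},
\]
using $|U_i|\le r$ and $t-s<0$. This gives
\[
\sum_i|U_i|^t\ \ge\ r^{t-s}M^{-1}\Phi(r)^s.
\]
This is still governed by $\Phi(r)^s$, so the normalisation must actually be handled by the choice of scales. I therefore expect the main obstacle to be precisely this mismatch between the $\Phi(r)^s$ normalisation produced by Tool~I and the $r$-normalisation in \eqref{eq:lower-dim-cover}.

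My first attempt to resolve it will use the uniformity in $r$. Given the target scale $r$, apply the hypothesis at a larger governing scale $r'$ with $\Phi(r')\approx$ comparable to the smallest sets one must handle, exploiting that the truncated kernel at scale $r'$ dominates $(\Phi(r')/\|u\|)^s$ on all distances below $r'$. This yields the Frostman-type bound
\[
\nu(B(x,\rho))\lesssim M\rho^s/\Phi(r')^s\quad\text{for }\Phi(r')\le\rho<r'.
\]
Letting $r'$ range over $(0,r_0]$ and using that $\Phi(r')\to0$ while $r'$ covers all small scales, these windows overlap to give
\[
\nu(U)\lesssim M|U|^s\quad\text{uniformly for }|U|\le r_0.
\]
This requires the natural reading of the hypothesis, in which $M$ absorbs the normalisation, i.e.\ the potential bound is really a bound on the unnormalised $s$-potential truncated at $r$. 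With this mass distribution estimate, any admissible cover satisfies
\[
1=\nu(F)\le\sum_i\nu(U_i)\lesssim M\sum_i|U_i|^s,
\]
so $S^s_{r,\Phi}(F)\ge c>0$ for all small $r$. Hence $\liminf_{r\to0}\log S^s_{r,\Phi}(F)/(-\log r)\ge0$, giving $\underline{\dim}_\Phi F\ge s$, and $\overline{\dim}_\Phi F\ge\underline{\dim}_\Phi F$ follows immediately from $\liminf\le\limsup$ in the two characterisations.
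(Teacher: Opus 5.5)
You are right that Tool~I with $\Gamma=M$ only gives $S^s_{r,\Phi}(F)\ge M^{-1}\Phi(r)^s$, and that $\log(M^{-1}\Phi(r)^s)/(-\log r)=-s\,\log\Phi(r)/\log r+o(1)\le -s+o(1)$. The paper's proof stops at exactly this point. It then asserts that positivity of $\log\Phi(r)/\log r$ gives $\liminf\ge 0$, which has the sign backwards. Your repair cannot succeed, however, because the statement as written is false. Since $0\le\widetilde{\varphi}^{\,r,\Phi}_s\le 1$, hypothesis \eqref{eq:uniform-truncated-potential} holds with $M=1$ for every probability measure on every $F$, so it carries no information. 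For example, $F=\{0\}$, $\nu=\delta_0$, $M=1$, $s=m$ satisfy the hypothesis. Every admissible cover of this $F$ contains a set of diameter at least $\Phi(r)$, so $S^m_{r,\Phi}(F)=\Phi(r)^m$ and $\underline{\dim}_\Phi F=\overline{\dim}_\Phi F=0<m$.

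\textbf{Where your argument breaks.} The failure is the ``overlapping windows'' step. The bound $\nu(B(x,\rho))\le M(\rho/\Phi(r'))^s$, valid for $\Phi(r')\le\rho<r'$, carries a factor $(\rho/\Phi(r'))^s\ge 1$ on its whole range. So it is never better than the trivial $\nu(B(x,\rho))\le 1$ when $M\ge 1$, and letting $r'$ vary cannot produce $\nu(U)\lesssim M|U|^s$.

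Your ``natural reading'' replaces the hypothesis by something at least as strong as $\int_F\widetilde{\varphi}^{\,r,\Phi}_s(x-y)\,d\nu(y)\le M\Phi(r)^s$, the normalisation used in Proposition~\ref{prop:uniform-potential-to-profile}. That is a different statement, and it is the one that should be proved. Under it no Frostman detour is needed. Lemma~\ref{lem:pot-cover-tool} with $\Gamma=M\Phi(r)^s$ gives $S^s_{r,\Phi}(F)\ge 1/M$ for all $0<r\le r_0$. Hence $\liminf_{r\to0}\log S^s_{r,\Phi}(F)/(-\log r)\ge 0$, and both conclusions follow.

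Separately, your claim that $\widetilde{\varphi}^{\,r,\Phi}_t\le\widetilde{\varphi}^{\,r,\Phi}_s$ for $t\le s$ is reversed. Since $\Phi(r)/\|u\|\le 1$, one has $(\Phi(r)/\|u\|)^t\ge(\Phi(r)/\|u\|)^s$. The hypothesis therefore passes to larger exponents, not smaller ones.
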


\begin{proof}
Fix $0<r\le r_0$ and apply Lemma~\ref{lem:pot-cover-tool} with $\Gamma=M$.
Since $\nu(F)=1$, every admissible cover of $F$ at scale $r$ satisfies
\[
S^s_{r,\Phi}(F)\ \ge\ \frac{\Phi(r)^s}{M},
\]
where $S^s_{r,\Phi}$ is the covering sum from Section~2.
Taking logarithms and dividing by $-\log r$ gives
\[
\frac{\log S^s_{r,\Phi}(F)}{-\log r}
\ge
\frac{s\log \Phi(r)}{-\log r} + o(1)
\qquad (r\to0).
\]
Since $\Phi(r)\le r$ for small $r$, the ratio $\frac{\log \Phi(r)}{\log r}$ is positive, and hence
\[
\liminf_{r\to0}\frac{\log S^s_{r,\Phi}(F)}{-\log r}\ \ge\ 0,
\qquad
\limsup_{r\to0}\frac{\log S^s_{r,\Phi}(F)}{-\log r}\ \ge\ 0.
\]
By the characterisations of $\underline{\dim}_{\Phi}$ and $\overline{\dim}_{\Phi}$ in terms of
$S^s_{r,\Phi}$ (Section~2), this implies $\underline{\dim}_{\Phi}(F)\ge s$ and
$\overline{\dim}_{\Phi}(F)\ge s$.
\end{proof}

\medskip
\noindent\textbf{Tool II:}
The preceding tool is tailored to covering sums in $\mathbb{R}^m$ and will be used especially for
projected sets.
For profile bounds, it is often more convenient to work directly with the full kernel
$\varphi^{r,\Phi}_{s,m}$ and the capacity definitions in Section~\ref{sec:capacity}.

\begin{proposition}
\label{prop:uniform-potential-to-profile}
Let $F\subset\mathbb{R}^n$ be bounded, let $\Phi$ be admissible, and fix $1\le m\le n$ and
$s\in[0,m]$.
Suppose there exist constants $r_0>0$ and $M>0$, and a probability measure
$\nu\in \mathcal{M}(F)$ such that for all $0<r\le r_0$,
\begin{equation}\label{eq:uniform-full-potential}
\int \varphi^{r,\Phi}_{s,m}(x-y)\,d\nu(y)
\ \le\ M\,\Phi(r)^s
\qquad\text{for all }x\in F.
\end{equation}
Then
\[
\underline{\dim}^{\,m}_{\Phi}(F)\ \ge\ s
\qquad\text{and}\qquad
\overline{\dim}^{\,m}_{\Phi}(F)\ \ge\ s.
\]
In particular, if $m=n$, then
\[
\underline{\dim}_{\Phi}(F)\ \ge\ s
\qquad\text{and}\qquad
\overline{\dim}_{\Phi}(F)\ \ge\ s.
\]
\end{proposition}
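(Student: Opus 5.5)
The plan is to go straight to the definition of the capacity in \eqref{eq:capacity-def} and use $\nu$ as a competitor. Fix $0<r\le r_0$. Integrating the pointwise bound \eqref{eq:uniform-full-potential} against $d\nu(x)$ gives, since $\nu$ is a probability measure supported on $F$,
\[
\mathcal{I}^{\,r,\Phi}_{s,m}(\nu)=\int \psi^{\,r,\Phi}_{s,m}(\nu,x)\,d\nu(x)\le M\,\Phi(r)^s .
\]
As $\nu\in\mathcal{M}(F)$ is admissible in the infimum, we get $C^{\,r,\Phi}_{s,m}(F)\ge M^{-1}\Phi(r)^{-s}$. (If the bound only holds on $F$ and $\nu$ charges $\overline F\setminus F$, one should read $\nu\in\mathcal M(F)$ as $\nu(F)=1$, which is what the integral needs. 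If compactness is wanted, for instance to invoke Lemma~\ref{lem:equilibrium}, it is not needed here: only the trivial inequality $\inf\le\mathcal I(\nu)$ is used.)

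Next, take logarithms and divide by $-\log\Phi(r)>0$, which is valid for small $r$ since $\Phi(r)\le r<1$:
\[
\frac{\log C^{\,r,\Phi}_{s,m}(F)}{-\log\Phi(r)}\ \ge\ s-\frac{\log M}{-\log\Phi(r)} .
\]
Since $\Phi(r)\le r\to0$, the error term tends to $0$. Hence the $\liminf$, and a fortiori the $\limsup$, of the left side is at least $s$. So $s$ belongs to both sets in Definition~\ref{def:Phi-dim-profiles}, and taking suprema gives $\underline{\dim}^{\,m}_\Phi F\ge s$ and $\overline{\dim}^{\,m}_\Phi F\ge s$. Lemma~\ref{lem:Phi-capacity-monotone} is not even needed for this direction, since we exhibit membership of $s$ directly.

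For the case $m=n$, I would not rely on an (as yet unproved) identification of $\dim^{\,n}_\Phi$ with $\dim_\Phi$. Instead I would pass through Tool~I. On $\{\|u\|<r\}$ the full kernel $\varphi^{r,\Phi}_{s,n}$ coincides with $\widetilde\varphi^{\,r,\Phi}_s$, and elsewhere $\widetilde\varphi^{\,r,\Phi}_s=0\le\varphi^{r,\Phi}_{s,n}$. Therefore
\[
\int\widetilde\varphi^{\,r,\Phi}_s(x-y)\,d\nu(y)\le M\Phi(r)^s \quad\text{for } x\in F .
\]
Applying Lemma~\ref{lem:pot-cover-tool} in $\mathbb{R}^n$ with $\Gamma=M\Phi(r)^s$ gives $\sum_i|U_i|^s\ge 1/M$ for every admissible cover. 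Thus $S^s_{r,\Phi}(F)\ge 1/M$ for all $r\le r_0$, so $\log S^s_{r,\Phi}(F)/(-\log r)\ge -\log M/(-\log r)\to0$. By \eqref{eq:lower-dim-cover}--\eqref{eq:upper-dim-cover} this gives both dimension bounds. Alternatively, one can note that $M\Phi(r)^s\le M$ and quote Corollary~\ref{cor:uniform-potential-to-dimension} with $m=n$ directly.

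No step is genuinely hard. The only point needing care is this last reduction: checking that the truncated kernel is pointwise dominated by the full kernel, so that the hypothesis of Tool~I follows. That check is immediate from \eqref{eq:phi-kernel}.
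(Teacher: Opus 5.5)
Your proof is correct. The first half is exactly the paper's argument: integrate the pointwise bound against $\nu$, use $\nu$ as a competitor to get $C^{\,r,\Phi}_{s,m}(F)\ge M^{-1}\Phi(r)^{-s}$, and read off Definition~\ref{def:Phi-dim-profiles}. The case $m=n$ is where you genuinely differ.

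\emph{The paper's route.} It simply cites Theorem~\ref{thm:Phi-dim-equals-profile}, a forward reference to the capacity--covering equivalence of Proposition~\ref{prop:cap-cover}. Unpacked, only the easy half $\Phi(r)^s C^{\,r,\Phi}_{s,n}(F)\le S^s_{r,\Phi}(F)$ of that proposition is relevant here. Combined with $C\ge M^{-1}\Phi(r)^{-s}$, it gives the same inequality you reach. But it gets there through an equilibrium measure on a compact set, and it leans on a theorem whose other half needs Besicovitch covering and a $\log(r/\Phi(r))$ loss.

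\emph{Your route.} You put $\Gamma=M\Phi(r)^s$ into Lemma~\ref{lem:pot-cover-tool} and obtain $S^s_{r,\Phi}(F)\ge 1/M$ directly. This uses $\nu$ itself and needs none of the machinery above. Because $\widetilde\varphi^{\,r,\Phi}_s\le\varphi^{r,\Phi}_{s,m}$ for every $m$, it also yields $\underline{\dim}_\Phi F\ge s$ and $\overline{\dim}_\Phi F\ge s$ from the hypothesis for any $m$, not only $m=n$.

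One refinement: run the argument of Lemma~\ref{lem:pot-cover-tool} with the full kernel $\varphi^{r,\Phi}_{s,n}$, which is exactly what your hypothesis controls, rather than the truncated one. The full kernel equals $(\Phi(r)/r)^s$ at $\|u\|=r$, whereas $\widetilde\varphi^{\,r,\Phi}_s$ vanishes there. As written, the lemma's pointwise bound $\widetilde\varphi^{\,r,\Phi}_s(x-y)\ge(\Phi(r)/|U_i|)^s$ fails for pairs with $\|x-y\|=|U_i|=r$.

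Drop your suggested alternative of bounding $M\Phi(r)^s\le M$ and quoting Corollary~\ref{cor:uniform-potential-to-dimension}. That step throws away exactly the factor $\Phi(r)^s$ that cancels the $\Phi(r)^s$ in the conclusion of Lemma~\ref{lem:pot-cover-tool}. The corollary is also not usable as stated. Since $0\le\widetilde\varphi^{\,r,\Phi}_s\le 1$, its hypothesis holds for every bounded set and every $\nu$ with $M=1$, so it would force $\underline{\dim}_\Phi F\ge m$ for every bounded $F\subset\mathbb{R}^m$, even a single point. Its proof only yields $\log S^s_{r,\Phi}(F)/(-\log r)\ge -s/\theta_\Phi(r)+o(1)$, and $-s/\theta_\Phi(r)\le -s$, so it gives no nonnegative lower bound.
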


\begin{proof}
Fix $0<r\le r_0$.
Integrating \eqref{eq:uniform-full-potential} against $d\nu(x)$ gives
\[
\mathcal I^{\,r,\Phi}_{s,m}(\nu)
=
\iint \varphi^{r,\Phi}_{s,m}(x-y)\,d\nu(x)d\nu(y)
\le
M\,\Phi(r)^s.
\]
By definition of $\Phi$--capacity,
\[
C^{\,r,\Phi}_{s,m}(F)
=
\Big(\inf_{\eta\in\mathcal{M}(F)}\mathcal I^{\,r,\Phi}_{s,m}(\eta)\Big)^{-1}
\ge
(\mathcal I^{\,r,\Phi}_{s,m}(\nu))^{-1}
\ge
M^{-1}\,\Phi(r)^{-s}.
\]
Since this estimate holds for all sufficiently small $r$, Definition~\ref{def:Phi-dim-profiles}
implies
\[
\liminf_{r\to0}\frac{\log C^{\,r,\Phi}_{s,m}(F)}{-\log\Phi(r)} \ge s,
\qquad
\limsup_{r\to0}\frac{\log C^{\,r,\Phi}_{s,m}(F)}{-\log\Phi(r)} \ge s,
\]
and hence $\underline{\dim}^{\,m}_{\Phi}(F)\ge s$ and $\overline{\dim}^{\,m}_{\Phi}(F)\ge s$.
The final statement for $m=n$ follows from Theorem~\ref{thm:Phi-dim-equals-profile}.
\end{proof}

\medskip
The Frostman criterion arises by verifying the uniform potential bound in
Proposition~\ref{prop:uniform-potential-to-profile} using the ball-growth hypothesis.

\begin{proposition}
\label{prop:frostman-profile-tools}
Let $F \subset \mathbb{R}^n$ be bounded and let $\Phi:(0,1]\to(0,1]$ be an admissible gauge function
satisfying \eqref{eq:Phi-condition}.
Suppose there exist constants $C>0$ and $\alpha>0$, and a Borel probability measure
$\nu \in \mathcal{M}(F)$ such that
\begin{equation}\label{eq:frostman-tools}
\nu(B(x,t)) \le C\, t^{\alpha}
\quad \text{for all } x \in \mathbb{R}^n \text{ and all sufficiently small } t>0 .
\end{equation}
Fix an integer $1 \le m \le n$.
Then for every $0<s<\min\{\alpha,m\}$ there exists $K=K(C,\alpha,s,m,n)>0$ such that
for all sufficiently small $r>0$,
\[
C^{\,r,\Phi}_{s,m}(F) \ge K\,\Phi(r)^{-s}.
\]
In particular,
\[
\underline{\dim}^{\,m}_{\Phi}(F)\ \ge\ \min\{\alpha,m\}
\qquad\text{and}\qquad
\overline{\dim}^{\,m}_{\Phi}(F)\ \ge\ \min\{\alpha,m\}.
\]
\end{proposition}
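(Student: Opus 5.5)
The plan is to verify the uniform potential bound of Proposition~\ref{prop:uniform-potential-to-profile}: for each fixed $0<s<\min\{\alpha,m\}$ we show that
\[
\int \varphi^{r,\Phi}_{s,m}(x-y)\,d\nu(y)\le K^{-1}\Phi(r)^s
\]
for all $x\in\mathbb{R}^n$ and all sufficiently small $r$. Integrating in $x$ against $\nu$ then bounds the energy $\mathcal I^{\,r,\Phi}_{s,m}(\nu)\le K^{-1}\Phi(r)^s$, and the capacity estimate $C^{\,r,\Phi}_{s,m}(F)\ge K\Phi(r)^{-s}$ follows directly from \eqref{eq:capacity-def}. The profile statements then follow from Proposition~\ref{prop:uniform-potential-to-profile} for each such $s$. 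Letting $s\uparrow\min\{\alpha,m\}$ and using that the profiles are suprema gives $\ge\min\{\alpha,m\}$.

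To estimate the potential, fix $x$ and write $\rho=\Phi(r)$. Split the integral over the three regions of \eqref{eq:phi-kernel} and use the layer-cake formula $\int g(\|x-y\|)\,d\nu(y)=\int_0^\infty \nu\{y:g(\|x-y\|)>\lambda\}\,d\lambda$. Equivalently, integrate by parts against $t\mapsto\nu(B(x,t))$, which is bounded by $Ct^\alpha$ for $t\le t_0$ and by $1$ always.

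\emph{Inner ball.} The contribution is $\nu(B(x,\rho))\le C\rho^\alpha\le C\rho^s$, since $s<\alpha$ and $\rho<1$.

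\emph{Middle annulus $\rho\le\|x-y\|<r$.} The contribution is
\[
\rho^s\int_{\rho\le\|x-y\|<r}\|x-y\|^{-s}\,d\nu(y)\le \rho^s\Big(\rho^{-s}\nu(B(x,\rho))+s\int_\rho^r t^{-s-1}\nu(B(x,t))\,dt\Big).
\]
Using $\nu(B(x,t))\le Ct^\alpha$ and $\alpha>s$, the integral is at most $\frac{Cs}{\alpha-s}r^{\alpha-s}\le \frac{Cs}{\alpha-s}$. So this part is $\lesssim\rho^s$.

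\emph{Tail $\|x-y\|\ge r$.} The contribution is
\[
(\rho/r)^s r^m\int_{\|x-y\|\ge r}\|x-y\|^{-m}\,d\nu(y)\le (\rho/r)^s r^m\Big(r^{-m}\cdot\text{(boundary term)}+m\int_r^\infty t^{-m-1}\nu(B(x,t))\,dt\Big).
\]
On $[r,t_0]$ bound $\nu(B(x,t))\le Ct^\alpha$, and on $[t_0,\infty)$ bound it by $1$. If $\alpha<m$, the piece $r^m\int_r^{t_0}t^{\alpha-m-1}\,dt$ is $\lesssim r^\alpha$. The piece $r^m\int_{t_0}^\infty t^{-m-1}\,dt$ is $\lesssim r^m t_0^{-m}$. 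Altogether the tail is $\lesssim(\rho/r)^s(r^{\alpha}+r^m)\le \rho^s r^{\min\{\alpha,m\}-s}\lesssim\rho^s$, since $s<\min\{\alpha,m\}$. If $\alpha\ge m$, one first replaces $\alpha$ by any $\alpha'\in(s,m)$, using $t^\alpha\le t^{\alpha'}$ for $t<1$, and argues the same way.

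The main obstacle is this tail term. One must check that the normalising factor $(\rho/r)^s$ exactly converts the tail bound $r^{\min\{\alpha,m\}}$ into something of order $\rho^s$. This requires the strict inequality $s<\min\{\alpha,m\}$, since equality would produce a logarithmic divergence. One must also handle the transition between the small-scale Frostman regime $t\le t_0$ and the trivial bound $\nu\le1$ at large scales uniformly in $x$. The constants collected from the three regions depend only on $C,\alpha,s,m,n$ (and on $t_0$), which gives $K$.
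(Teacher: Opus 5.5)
Your proposal is correct and follows the same structure as the paper's proof: the three-region split of $\int\varphi^{r,\Phi}_{s,m}(x-y)\,d\nu(y)$, a layer-cake estimate on the annulus, integration against $\nu$ to bound the energy, and $s\uparrow\min\{\alpha,m\}$ at the end. The one real difference is the tail $\|x-y\|\ge r$, and there your argument is the one that closes the proof.

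The paper bounds the kernel on the tail by $(\rho/r)^s$ and then asserts $(\rho/r)^s\le\rho^s$. For $r<1$ this is backwards, since $(\rho/r)^s=r^{-s}\rho^s\ge\rho^s$. That crude bound alone only gives $C^{\,r,\Phi}_{s,m}(F)\gtrsim(r/\Phi(r))^{s}$. This loses exactly a factor $r^{s}$ and yields the profile bound only when $\log r/\log\Phi(r)\to0$. Your estimate feeds the Frostman bound into $m\int_r^\infty t^{-m-1}\nu(B(x,t))\,dt$, lowering $\alpha$ to some $\alpha'\in(s,m)$ if $\alpha\ge m$ and using $\nu\le 1$ beyond $t_0$. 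It gives a tail of order $\rho^s r^{\min\{\alpha',m\}-s}\lesssim\rho^s$, which is what is needed. You are right that this is the crux and that it uses the strict inequality $s<\min\{\alpha,m\}$. The dependence of $K$ on $t_0$ that you flag is unavoidable at this generality and harmless for the dimension conclusions.

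One correction on the annulus: the boundary term in your layer-cake inequality should be $r^{-s}\nu(B(x,r))$, not $\rho^{-s}\nu(B(x,\rho))$. Writing $\|x-y\|^{-s}=r^{-s}+s\int_{\|x-y\|}^{r}t^{-s-1}\,dt$ and applying Fubini gives
$\int_{\rho\le\|x-y\|<r}\|x-y\|^{-s}\,d\nu(y)\le r^{-s}\nu(B(x,r))+s\int_\rho^r t^{-s-1}\nu(\overline{B}(x,t))\,dt$.
Your displayed version is false in general. For example, take $\nu$ uniform on a unit ball centred at $x$. The left side exceeds your right side by $r^{n-s}-2\rho^{n-s}>0$ once $\rho/r$ is small. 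Since $r^{-s}\nu(B(x,r))\le Cr^{\alpha-s}\le C$, your conclusion that the annulus contributes $\lesssim\rho^s$ still holds.
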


\begin{proof}
Fix $0<s<\min\{\alpha,m\}$ and $r>0$ small, and set $\rho:=\Phi(r)$.
Define
\[
U(x):=\int \varphi^{r,\Phi}_{s,m}(x-y)\,d\nu(y).
\]
We estimate $U(x)$ uniformly in $x\in\mathbb{R}^n$ by splitting into three regions.

\smallskip\noindent
\emph{(i) $\|x-y\|<\rho$.}
Here $\varphi^{r,\Phi}_{s,m}=1$, hence
\[
\int_{\|x-y\|<\rho}\varphi^{r,\Phi}_{s,m}(x-y)\,d\nu(y)
\le \nu(B(x,\rho))
\le C\rho^{\alpha}
\le C\rho^{s}.
\]

\smallskip\noindent
\emph{(ii) $\rho\le \|x-y\|<r$.}
Here $\varphi^{r,\Phi}_{s,m}(x-y)=(\rho/\|x-y\|)^s$.
Using a layer-cake representation and integration by parts,
\[
\int_{\rho\le \|x-y\|<r}\Big(\frac{\rho}{\|x-y\|}\Big)^s d\nu(y)
\le s\rho^s\int_{\rho}^{r}\frac{\nu(B(x,t))}{t^{s+1}}\,dt
+\rho^s\frac{\nu(B(x,r))}{r^s}.
\]
Applying \eqref{eq:frostman-tools} and $\alpha-s>0$ yields
\[
\int_{\rho\le \|x-y\|<r}\Big(\frac{\rho}{\|x-y\|}\Big)^s d\nu(y)
\le C'\rho^s.
\]

\smallskip\noindent
\emph{(iii) $\|x-y\|\ge r$.}
Here
\[
\varphi^{r,\Phi}_{s,m}(x-y)
=\Big(\frac{\rho}{r}\Big)^s\Big(\frac{r}{\|x-y\|}\Big)^m
\le \Big(\frac{\rho}{r}\Big)^s\le \rho^s,
\]
so
\[
\int_{\|x-y\|\ge r}\varphi^{r,\Phi}_{s,m}(x-y)\,d\nu(y)\le \rho^s.
\]

\smallskip
Combining (i)-(iii) gives $U(x)\le M\rho^s$ for all $x$, with $M$ depending only on
$C,\alpha,s,m,n$.
Integrating over $x$ yields
\[
\mathcal I^{\,r,\Phi}_{s,m}(\nu)=\int U(x)\,d\nu(x)\le M\rho^s.
\]
Therefore
\[
C^{\,r,\Phi}_{s,m}(F)\ge (\mathcal I^{\,r,\Phi}_{s,m}(\nu))^{-1}\ge (M\rho^s)^{-1}
=K\,\Phi(r)^{-s}.
\]
Since $s<\min\{\alpha,m\}$ was arbitrary, Definition~\ref{def:Phi-dim-profiles} yields
$\underline{\dim}^{\,m}_{\Phi}(F)\ge \min\{\alpha,m\}$ and
$\overline{\dim}^{\,m}_{\Phi}(F)\ge \min\{\alpha,m\}$.
\end{proof}

\medskip
\noindent\textbf{Tool III:}
Product structure provides a convenient way to build Frostman measures on $A\times B$ from
Frostman measures on $A$ and $B$, hence to obtain additive lower bounds via the preceding tool.

\begin{lemma}
\label{lem:product-frostman-toolkit}
Let $A\subset\mathbb{R}^{n_1}$ and $B\subset\mathbb{R}^{n_2}$ be bounded sets, and
let $\Phi$ be an admissible gauge function.
Suppose that $A$ supports an $\alpha$--Frostman probability measure $\nu_A$ and
$B$ supports a $\beta$--Frostman probability measure $\nu_B$, i.e.
\[
\nu_A(B(x,t))\le C_A t^{\alpha},
\qquad
\nu_B(B(y,t))\le C_B t^{\beta}
\]
for all sufficiently small $t>0$.
Set $F:=A\times B\subset\mathbb{R}^{n_1+n_2}$ and let $\nu:=\nu_A\otimes \nu_B$.
Then $\nu$ is an $(\alpha+\beta)$--Frostman probability measure on $F$.
Consequently, for every integer $1\le m\le n_1+n_2$,
\[
\underline{\dim}^{\,m}_{\Phi}(F)\ \ge\ \min\{\alpha+\beta,m\},
\qquad
\overline{\dim}^{\,m}_{\Phi}(F)\ \ge\ \min\{\alpha+\beta,m\}.
\]
\end{lemma}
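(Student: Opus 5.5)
The proof has two steps. First we show that $\nu=\nu_A\otimes\nu_B$ satisfies a Frostman bound of exponent $\alpha+\beta$ on $F$. Then we feed this bound into Proposition~\ref{prop:frostman-profile-tools}.

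\emph{Step 1: ball containment.} Let $z=(x,y)\in\mathbb{R}^{n_1}\times\mathbb{R}^{n_2}$ and $t>0$. If $(x',y')\in B(z,t)$, then $\|x-x'\|\le\|z-z'\|<t$ and likewise $\|y-y'\|<t$. Hence
\[
B(z,t)\subset B(x,t)\times B(y,t).
\]

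\emph{Step 2: the product bound.} By the definition of the product measure,
\[
\nu(B(z,t))\le \nu_A(B(x,t))\,\nu_B(B(y,t))\le C_AC_B\,t^{\alpha+\beta}.
\]
This holds for all $t$ below the smaller of the two thresholds in the hypotheses on $\nu_A$ and $\nu_B$. The centre $z$ is arbitrary, so the bound holds for all $z\in\mathbb{R}^{n_1+n_2}$.

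If the Frostman hypotheses are only assumed for centres in the supports of $\nu_A$ and $\nu_B$, we first upgrade them. Suppose $B(x,t)$ meets $\operatorname{supp}\nu_A$ at some point $x_0$. Then $B(x,t)\subset B(x_0,2t)$, so $\nu_A(B(x,t))\le 2^{\alpha}C_At^{\alpha}$. If the ball misses the support, its measure is zero. This argument only changes the constants.

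\emph{Step 3: support and mass.} The measure $\nu$ is a Borel probability measure, with total mass $\nu_A(A)\,\nu_B(B)=1$. Its support satisfies $\operatorname{supp}\nu=\operatorname{supp}\nu_A\times\operatorname{supp}\nu_B$, which is contained in $\overline{A}\times\overline{B}$. This matches the meaning of $\mathcal{M}(F)$ used in the earlier tools. If $A$ and $B$ are not closed, one either passes to closures or assumes $\nu_A(A)=\nu_B(B)=1$, in which case $\nu(A\times B)=1$.

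\emph{Step 4: conclusion.} The set $F$ is bounded, and $\nu$ satisfies \eqref{eq:frostman-tools} with $C=C_AC_B$ and exponent $\alpha+\beta$. Proposition~\ref{prop:frostman-profile-tools} therefore gives, for every $1\le m\le n_1+n_2=n$,
\[
\underline{\dim}^{\,m}_{\Phi}(F)\ \ge\ \min\{\alpha+\beta,m\},\qquad
\overline{\dim}^{\,m}_{\Phi}(F)\ \ge\ \min\{\alpha+\beta,m\}.
\]

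There is no real obstacle here. The only points needing care are the bookkeeping for the centres of the balls and for the support of $\nu$, both handled above.
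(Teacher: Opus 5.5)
Your proof is correct and follows the paper's argument: the containment $B((x,y),t)\subset B(x,t)\times B(y,t)$ gives the $(\alpha+\beta)$--Frostman bound for $\nu_A\otimes\nu_B$, and Proposition~\ref{prop:frostman-profile-tools} then gives the profile bounds. Your extra bookkeeping about ball centres and the support of $\nu$ is harmless, and your inequality $\nu(B(z,t))\le \nu_A(B(x,t))\,\nu_B(B(y,t))$ is the correct form of the step that the paper writes as an equality.
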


\begin{proof}
Let $(x,y)\in\mathbb{R}^{n_1+n_2}$ and $t>0$.
Since Euclidean balls in the product space satisfy
\[
B((x,y),t)\subset B(x,t)\times B(y,t),
\]
we have
\[
\nu(B((x,y),t))
=\nu_A(B(x,t))\,\nu_B(B(y,t))
\le C_A C_B\, t^{\alpha+\beta}.
\]
Thus $\nu$ is an $(\alpha+\beta)$--Frostman probability measure on $F$.
The stated profile bounds follow from Proposition~\ref{prop:frostman-profile-tools}.
\end{proof}

\medskip
The toolkit above also allows one to localise $\Phi$--dimension profiles.
In particular, sets of profile dimension strictly larger than $s$ contain
compact subsets whose lower and upper $\Phi$--dimension profiles are equal
to $s$.

\begin{corollary}
\label{lem:subset-exact-phi-profile}
Let $E\subset\mathbb{R}^n$ be a bounded Borel set, let $\Phi$ be admissible,
and let $1\le m\le n$.
Suppose that
\[
\underline{\dim}^{\,m}_{\Phi}E > s.
\]
Then there exists a compact set $F\subset E$ such that
\[
\underline{\dim}^{\,m}_{\Phi}F
=
\overline{\dim}^{\,m}_{\Phi}F
=
s.
\]    
\end{corollary}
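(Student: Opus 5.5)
The plan is to adapt the Falconer/Burrell--Falconer--Fraser subset construction, using the capacity/Frostman toolkit above for lower bounds and an explicit upper-bound mechanism for capacities. The target is the one-sided statement: for each $s<\underline{\dim}^{\,m}_{\Phi}E$ we produce compact $F\subset E$ whose two profiles both equal $s$.

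\emph{Step 1: reduce to a compact set.} First reduce to the case that $E$ is compact with $\underline{\dim}^{\,m}_{\Phi}E>s$. Take $s<s'<\underline{\dim}^{\,m}_{\Phi}E$. Then for all small $r$ we have $C^{r,\Phi}_{s',m}(E)\ge \Phi(r)^{-s'+o(1)}$. Inner regularity of capacity-type quantities for Borel sets (via the equilibrium-measure theory of Lemma~\ref{lem:equilibrium}) would then give a compact $E_0\subset E$ retaining this. If inner regularity is problematic uniformly in $r$, we instead extract, at a sequence of scales, compact pieces, and assemble them at the end of Step~2.

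\emph{Step 2: build $F$ and bound the profiles from below.} The aim is a measure $\nu$ on a compact $F\subset E_0$ satisfying the uniform bound \eqref{eq:uniform-full-potential} at exponent $s$, which yields $\underline{\dim}^{\,m}_{\Phi}F\ge s$ by Proposition~\ref{prop:uniform-potential-to-profile}. The natural route is a Frostman-type construction. Since the profile exceeds $s$, we work on a dyadic grid at scales $2^{-k}$ and use equilibrium measures (Lemma~\ref{lem:equilibrium}) at each scale. The equipotential property controls the potential of $\mu^\ast$ on $E_0$, and we prune cubes so that the retained mass satisfies $\nu(B(x,t))\lesssim t^{s}$ for $t\ge$ a cutoff.

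The cleaner alternative I would try first is a Mattila-style pruning: choose a Cantor-type subset $F$ of $E_0$ so that every dyadic cube of side $2^{-k}$ meeting $F$ contains about $2^{-ks}$ proportion of mass. This gives an $s$-Frostman measure, and Proposition~\ref{prop:frostman-profile-tools} (with $\alpha$ slightly above $s$ lost, so we use $s_j\uparrow s$ and a diagonal construction) gives the lower bound $\ge s$.

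\emph{Step 3: upper bound.} Here we must show $\overline{\dim}^{\,m}_{\Phi}F\le s$. The idea is to make $F$ "thin": arrange that $F$ is covered at every scale $2^{-k}$ by at most $\approx 2^{ks}$ cubes. Then for any $\mu\in M(F)$, pigeonholing mass over the $N\approx\Phi(r)^{-s}$ cubes of side $\Phi(r)$ gives $\mathcal I^{r,\Phi}_{t,m}(\mu)\ge \sum_Q\mu(Q)^2\ge 1/N$. Hence $C^{r,\Phi}_{t,m}(F)\le \Phi(r)^{-s}$, and for $t>s$ we get $\limsup \log C/(-\log\Phi(r))\le s<t$, so the profile is at most $s$. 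Combined with Step~2, this yields $\underline{\dim}=\overline{\dim}=s$.

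\emph{Main obstacle.} The crux is making the construction achieve both properties simultaneously at every scale: covering number at most $2^{ks}$ and a measure with $s$-Frostman decay, inside $E$. Having only $\underline{\dim}^{\,m}_{\Phi}E>s$ rather than a Hausdorff-type hypothesis means mass is guaranteed only in the capacity sense. I would handle this by using the capacity lower bound at scale $\Phi(r)$ to show $E_0$ meets many ($\gtrsim \Phi(r)^{-s'}$) well-separated cubes, then selecting exactly $\approx 2^{ks}$ of them hierarchically. The delicate point, to be checked carefully, is that the profile hypothesis gives this only for $\Phi(r)$-scales, not all dyadic scales, which may force restricting $F$'s structure to the scale sequence $\Phi(r)$.
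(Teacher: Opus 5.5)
Your Step~1 cannot be carried out. In fact no argument can prove the statement as written when $E$ is a non-closed Borel set.

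\emph{Why Step~1 fails.} The kernel $\varphi^{\,r,\Phi}_{s,m}$ is bounded and continuous, and finitely supported measures on $E$ are weak$^*$ dense in $M(\overline{E})$. Hence $C^{\,r,\Phi}_{s,m}(E)=C^{\,r,\Phi}_{s,m}(\overline{E})$ for every $r$. The hypothesis therefore only sees $\overline{E}$, and no inner-regularity principle can give you a compact $E_0\subset E$ with the same profile.

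\emph{A counterexample.} Take $n=m=1$ and $E=\{1/\log k: k\ge 3\}$.
\begin{itemize}
\item Near $x$ the gaps of $E$ are about $x^2e^{-1/x}$. So $E$ meets every interval of length $\Phi(r)$ in $(0,L]$, where $L=1/\log(1/\Phi(r))$.
\item Put equal masses on one such point per interval. This gives $\mathcal I^{\,r,\Phi}_{s,1}(\nu)\lesssim_s \Phi(r)^s\log^2(1/\Phi(r))$ for $s<1$. Hence $\underline{\dim}^{\,1}_{\Phi}E=1$ for every admissible $\Phi$.
\item But $0\notin E$, so every compact $K\subset E$ is finite. Since the kernel equals $1$ at the origin, $\mathcal I(\mu)\ge\sum_x\mu(\{x\})^2\ge 1/\#K$. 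Thus $C^{\,r,\Phi}_{s,1}(K)\le\#K$, and both profiles of $K$ are $0$.
\end{itemize}
So for $0<s<1$ no such $F$ exists. The result needs $E$ compact, or else only $F\subset\overline{E}$. The paper's proof has the same defect: its set $F=\bigcap_j\overline{\bigcup_{k\ge j}F_k}$ is only contained in $\overline{E}$.

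\emph{Step~2 fails even for compact $E$.} The compact set $\overline{E}=E\cup\{0\}$ above is countable but has lower profile $1$. So the hypothesis is compatible with $\dim_H E=0$. Every probability measure on a countable set has an atom, so no subset of such an $E$ carries a measure with $\nu(B(x,t))\le Ct^{\alpha}$, $\alpha>0$, at all small $t$. Consequently the following all break down whenever $s\ge\dim_H E$, a range the hypothesis does not exclude:
\begin{itemize}
\item the Frostman criterion (Proposition~\ref{prop:frostman-profile-tools});
\item the Mattila-type pruning and its diagonal $s_j\uparrow s$ variant;
\item your hierarchical selection of $\approx 2^{ks}$ cubes carrying evenly spread mass.
\end{itemize}

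\emph{Tool~II does not help either.} With a single $\nu$, the kernel is at least $(\Phi(r)/t)^s$ on $B(0,t)$ whenever $\Phi(r)\le t<r$. Choosing $r\in(t,2t]$ shows that its hypothesis forces $\nu(B(x,t))\le Mt^s$ for $x\in F$ and all small $t$, i.e.\ $\dim_H F\ge s$. The paper's appeal to Tool~II is unjustified for the same reason.

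\emph{What is actually needed.} You need a family $\nu_r\in M(F)$ that genuinely depends on $r$, with $\mathcal I^{\,r,\Phi}_{s',m}(\nu_r)\le\Phi(r)^{s'-o(1)}$ for \emph{every} small $r$. A sequence $r_k$ does not suffice, because the lower profile is a $\liminf$. These measures must be built into $F$ without producing a Cantor-type set.

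\emph{Step~3 is sound.} The kernel is $1$ on $B(0,\Phi(r))$, so Cauchy--Schwarz over cells of diameter $<\Phi(r)$ gives $C^{\,r,\Phi}_{t,m}(F)\lesssim_n N_{\Phi(r)}(F)$. This improves on the paper, whose upper bound ``by monotonicity'' only yields $\overline{\dim}^{\,m}_{\Phi}F\le\overline{\dim}^{\,m}_{\Phi}E$.

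However, insisting on $\overline{\dim}_B F\le s$ forces $\underline{\dim}^{\,m}_{\Phi}F=\overline{\dim}_B F=s$. That is more than the statement asks, and it is not obviously attainable inside $E$. For example, take $E=\{0\}\cup\{k^{-p}\}$ and $\Phi(r)=r^{1/\theta}$. The natural subsets $\{0\}\cup\{k^{-ap}\}$, $a\in\mathbb{N}$, have $\theta$-intermediate dimension $\theta/(ap+\theta)$, strictly below their box dimension $1/(ap+1)$.
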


\begin{proof}
Fix $s<\underline{\dim}^{\,m}_{\Phi}E$.
By definition of $\underline{\dim}^{\,m}_{\Phi}E$, there exist a sequence
$r_k\downarrow0$ and a constant $c>0$ such that
\[
C^{\,r_k,\Phi}_{s,m}(E)\ \ge\ c\,\Phi(r_k)^{-s}
\qquad\text{for all }k.
\]
Let $\mu_k\in\mathcal{M}(E)$ be equilibrium measures realising
$C^{\,r_k,\Phi}_{s,m}(E)$, so that
\[
\mathcal I^{\,r_k,\Phi}_{s,m}(\mu_k)
=
\bigl(C^{\,r_k,\Phi}_{s,m}(E)\bigr)^{-1}
\lesssim
\Phi(r_k)^s.
\]

Define the potentials
\[
U_k(x)
:=
\int \varphi^{r_k,\Phi}_{s,m}(x-y)\,d\mu_k(y).
\]
By Markov’s inequality, the sets
\[
F_k
:=
\Bigl\{x\in E:\ U_k(x)\le 2\,\mathcal I^{\,r_k,\Phi}_{s,m}(\mu_k)\Bigr\}
\]
satisfy $\mu_k(F_k)\ge\tfrac12$ for all $k$.

Define
\[
F
:=
\bigcap_{j=1}^{\infty}
\overline{\bigcup_{k\ge j}F_k}.
\]
Then $F$ is non-empty, compact, and satisfies $F\subset E$.

By construction, the measures $\mu_k|_F$ satisfy uniform $\Phi$--potential
bounds at scales $r_k$ with the correct normalisation $\Phi(r_k)^s$.
Applying Tool~II (Proposition~\ref{prop:uniform-potential-to-profile}) yields
\[
\underline{\dim}^{\,m}_{\Phi}F\ \ge\ s.
\]
On the other hand, since $F\subset E$ and the profile functions are monotone,
\[
\overline{\dim}^{\,m}_{\Phi}F\ \le\ s.
\]
Therefore,
\[
\underline{\dim}^{\,m}_{\Phi}F
=
\overline{\dim}^{\,m}_{\Phi}F
=
s,
\]
as claimed.
\end{proof}

\begin{remark}
Corollary~\ref{lem:subset-exact-phi-profile} is the profile-level analogue of the
classical fact that a Borel set of Hausdorff dimension greater than $s$
contains compact subsets of Hausdorff dimension exactly $s$.
The proof relies on the capacity characterisation and the uniform
potential-to-dimension principle, rather than on Hausdorff measure.
\end{remark}

\subsection{Capacity--covering equivalence and recovery of $\Phi$--dimensions}

We now show that $\Phi$--capacities and $\Phi$--covering sums encode the same notion of size, up to logarithmic factors. This establishes that the capacity formulation recovers the original $\Phi$--intermediate dimensions.

\begin{lemma}\label{lem:Phi-monotone-m}
Let $E\subset\mathbb{R}^n$ be bounded and non-empty, and let
$\Phi:(0,1]\to(0,1]$ satisfy \eqref{eq:Phi-condition}.
For integers $1\le m_1\le m_2\le n$,
\[
\underline{\dim}^{\,m_1}_{\Phi} E
\;\le\;
\underline{\dim}^{\,m_2}_{\Phi} E
\qquad\text{and}\qquad
\overline{\dim}^{\,m_1}_{\Phi} E
\;\le\;
\overline{\dim}^{\,m_2}_{\Phi} E .
\]
\end{lemma}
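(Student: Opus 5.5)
Since $m_1\le m_2$, the profile for $m_1$ is a supremum over $s\in[0,m_1]\subset[0,m_2]$, so it is enough to compare the kernels pointwise. For fixed $s\in[0,m_1]$ and $r$, I would show
\[
\varphi^{r,\Phi}_{s,m_2}(x)\;\le\;\varphi^{r,\Phi}_{s,m_1}(x)\qquad\text{for all }x\in\mathbb{R}^n .
\]
On $\|x\|<r$ the two kernels coincide, since the definition \eqref{eq:phi-kernel} does not involve $m$ there. On $\|x\|\ge r$ we have $r/\|x\|\le 1$, so $(r/\|x\|)^{m_2}\le (r/\|x\|)^{m_1}$. Multiplying by the common factor $(\Phi(r)/r)^s$ gives the inequality.

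Integrating this against $\mu\otimes\mu$ gives $\mathcal I^{\,r,\Phi}_{s,m_2}(\mu)\le \mathcal I^{\,r,\Phi}_{s,m_1}(\mu)$ for every $\mu\in M(E)$. Taking the infimum over $\mu$ and then reciprocals yields
\[
C^{\,r,\Phi}_{s,m_2}(E)\ \ge\ C^{\,r,\Phi}_{s,m_1}(E)
\]
for every $r$ and every $s\in[0,m_1]$. This argument works directly for bounded $E$, since it only uses the infimum over probability measures and never needs an equilibrium measure.

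Next, divide by $-\log\Phi(r)$. This is positive for small $r$, because $\Phi(r)\le r<1$. Passing to $\liminf$ and to $\limsup$ as $r\to0$ shows the following. If $s\in[0,m_1]$ satisfies $\liminf_{r\to0}\log C^{\,r,\Phi}_{s,m_1}(E)/(-\log\Phi(r))\ge s$, then the same holds with $m_2$ in place of $m_1$, and likewise for $\limsup$. So the set of $s$ whose supremum defines $\underline{\dim}^{\,m_1}_{\Phi}E$ in Definition~\ref{def:Phi-dim-profiles} is contained in the set defining $\underline{\dim}^{\,m_2}_{\Phi}E$, and similarly for the upper profiles. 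Taking suprema gives both inequalities.

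I do not expect a real obstacle. The only points needing care are that $s$ ranges over $[0,m_1]$, which lies inside the admissible range $[0,m_2]$ for the larger kernel, and that the normalisation $-\log\Phi(r)$ and the threshold $s$ are the same for both $m_1$ and $m_2$. No appeal to Lemma~\ref{lem:Phi-capacity-monotone} is needed, since the argument compares the sets of admissible $s$ directly.
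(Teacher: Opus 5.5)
Your proposal is correct and follows the paper's argument. The paper's proof likewise observes that $\varphi^{r,\Phi}_{s,m}$ is pointwise non-increasing in $m$ (only the tail $\|x\|\ge r$ depends on $m$), deduces $C^{\,r,\Phi}_{s,m_1}(E)\le C^{\,r,\Phi}_{s,m_2}(E)$, and passes to logarithmic limits. You additionally spell out details the paper leaves implicit: the containment $[0,m_1]\subset[0,m_2]$ of the $s$-ranges, and the positivity of $-\log\Phi(r)$.
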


\begin{proof}
This follows immediately from the definition of the $\Phi$--kernels.
For fixed $s$ and $r$, the kernel $\varphi^{r,\Phi}_{s,m}(x)$ is
non-increasing in $m$ for all $x\in\mathbb{R}^n$.
Consequently,
\[
C^{\,r,\Phi}_{s,m_1}(E)\ \le\ C^{\,r,\Phi}_{s,m_2}(E)
\qquad\text{whenever } m_1\le m_2.
\]
Taking logarithmic limits in the definitions of
$\underline{\dim}^{\,m}_{\Phi} E$ and $\overline{\dim}^{\,m}_{\Phi} E$
yields the stated inequalities.
\end{proof}

We now make precise the relationship between the covering quantities $S^s_{r,\Phi}(E)$ and the $\Phi$--capacities $C^{\,r,\Phi}_{s,m}(E)$. As in the $\theta$--intermediate setting, the key point is that these two approaches yield equivalent notions of size up to multiplicative constants. This equivalence allows us to characterize $\Phi$--intermediate dimensions and their associated dimension profiles in terms of capacities.

\begin{theorem}
\label{thm:Phi-dim-equals-profile}
Let $E\subset\mathbb{R}^n$ be bounded and non-empty, and let
$\Phi:(0,1]\to(0,1]$ satisfy \eqref{eq:Phi-condition}.
Then
\[
\underline{\dim}_\Phi E
=
\underline{\dim}^{\,n}_\Phi E,
\qquad
\overline{\dim}_\Phi E
=
\overline{\dim}^{\,n}_\Phi E,
\]
where the right-hand sides are defined via $\Phi$--capacities.
\end{theorem}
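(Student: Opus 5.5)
The plan is to prove a two-sided comparison between the covering sums $S^s_{r,\Phi}(E)$ and the capacities $C^{\,r,\Phi}_{s,n}(E)$ at each fixed scale, following the Burrell--Falconer--Fraser argument for $\theta$--intermediate dimensions. Specifically, we aim for constants $a,b>0$ depending only on $n$ and $s$ such that for all small $r$,
\[
a\,\Phi(r)^{s}\,C^{\,r,\Phi}_{s,n}(E)\;\le\;S^s_{r,\Phi}(E)\;\le\;b\,(\log(1/\Phi(r)))\,\Phi(r)^{s}\,C^{\,r,\Phi}_{s,n}(E).
\]
The $\log(1/\Phi(r))$ loss is allowed, since it is negligible after dividing by $-\log\Phi(r)$. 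Dividing the logarithm of this inequality by $-\log\Phi(r)$ gives
\[
\frac{\log S^s_{r,\Phi}(E)}{-\log\Phi(r)}=\frac{\log C^{\,r,\Phi}_{s,n}(E)}{-\log\Phi(r)}-s+o(1).
\]
Since $-\log\Phi(r)\ge-\log r>0$, the sign of a $\liminf$ or $\limsup$ of $\log S/(-\log r)$ agrees with that of $\log S/(-\log\Phi(r))$. Lemma~\ref{lem:Phi-critical} and Lemma~\ref{lem:Phi-capacity-monotone} give strict decrease in $s$ with slopes bounded away from zero. The critical exponents in \eqref{eq:lower-dim-cover}, \eqref{eq:upper-dim-cover} and Definition~\ref{def:Phi-dim-profiles} therefore coincide, both for the lower (liminf) and the upper (limsup) versions. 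We may replace $E$ by its closure, which changes neither side, so that Lemma~\ref{lem:equilibrium} applies.

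For the first inequality (covers bound capacity from below), take any admissible cover $\{U_i\}$ and define $\mu=\sum_i w_i\,\delta_{x_i}$-type measures, or more robustly use the dual argument. I would pick $x_i\in U_i\cap E$ and set $\mu:=\sum_i |U_i|^s\,\delta_{x_i}\big/\sum_j|U_j|^s$. Its energy is then estimated by a counting argument: for each $x$, the points $x_i$ at distance in $[2^k\Phi(r),2^{k+1}\Phi(r))$ are covered by the sets $U_i$, and a volume argument bounds their weight. This direction is delicate.

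A cleaner route is the converse use of Tool I. Let $\mu^*$ be the equilibrium measure and $\gamma=\mathcal I(\mu^*)=C^{-1}$. Restrict to $F=\{x: U(x)\le 2\gamma\}$, which has $\mu^*(F)\ge 1/2$ by Markov's inequality. Since $\widetilde\varphi\le\varphi$, Lemma~\ref{lem:pot-cover-tool} with $\Gamma=2\gamma$ gives $S^s_{r,\Phi}(E)\ge S^s_{r,\Phi}(F)\ge \Phi(r)^s/(4\gamma)=\tfrac14\Phi(r)^sC$. The issue is that Tool I needs the potential bound for all $x\in F$ and for the restricted measure. This holds because restricting only decreases the potential. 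Hence this direction is essentially immediate.

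The upper bound on $S^s_{r,\Phi}(E)$ is the main obstacle. Here the equipotential property of Lemma~\ref{lem:equilibrium} gives $U(x)\ge\gamma$ on all of $E$, and we must build a cheap cover. For each $x\in E$, split $U(x)$ into the contributions from $\|x-y\|<\Phi(r)$, from the dyadic annuli $2^k\Phi(r)\le\|x-y\|<2^{k+1}\Phi(r)$ inside $B(x,r)$ (about $\log_2(r/\Phi(r))\le\log_2(1/\Phi(r))$ of them), and from the tail $\|x-y\|\ge r$. Pigeonholing, each $x$ has either a radius $t_x\in[\Phi(r),r]$ with $\mu^*(B(x,2t_x))\gtrsim \gamma\,(t_x/\Phi(r))^s/\log(1/\Phi(r))$, or a large tail contribution. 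In the first case, apply the Vitali $5r$-covering lemma to the balls $B(x,t_x)$, which gives disjoint balls whose $5\times$ enlargements cover $E$ and have diameters in $[\Phi(r),10r]$. Summing $t_x^s\lesssim\Phi(r)^s\log(1/\Phi(r))\,\mu^*(B)/\gamma$ over the disjoint balls yields the bound $b\log(1/\Phi(r))\Phi(r)^sC$. The adjustments from $10r$ to $r$ only cost constants, handled by comparability as in Proposition~\ref{prop:basic}.

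The tail case is where $m=n$ is essential. The tail term is $(\Phi(r)/r)^s\int_{\|x-y\|\ge r}(r/\|x-y\|)^n\,d\mu^*$. Decomposing into annuli $2^jr\le\|x-y\|<2^{j+1}r$ and noting that an annulus at scale $2^jr$ is covered by $\lesssim 2^{jn}$ balls of radius $r$, a large tail forces some ball of radius $r$ near $x$ to carry mass $\gtrsim\gamma(r/\Phi(r))^s$ up to the log factor. This reduces to the first case with $t_x\approx r$. I expect making this tail-to-local-mass step precise, with a uniformly summable choice of weights over $j$ (for instance weights $\propto j^{-2}$, so that only a harmless factor is lost), to be the main technical step.
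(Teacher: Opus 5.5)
The gap is in your last step, where you pass from the scale-by-scale comparison to equality of the critical exponents. Put $a_s(r):=\log S^s_{r,\Phi}(E)/(-\log\Phi(r))$ and $\theta_\Phi(r)=\log r/\log\Phi(r)\in(0,1]$. Your comparison shows that $\underline{\dim}^{\,n}_\Phi E$ is the threshold for $\liminf_{r\to0}a_s(r)\ge0$. By contrast, $\underline{\dim}_\Phi E$ is the threshold for $\liminf_{r\to0}\theta_\Phi(r)^{-1}a_s(r)\ge0$. Since $\theta_\Phi\le 1$, this gives only $\underline{\dim}_\Phi E\le\underline{\dim}^{\,n}_\Phi E$, and likewise for the upper versions. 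For the converse you must show, for $t<s$, that $a_s\ge-\varepsilon$ for small $r$ forces $a_t\ge-\varepsilon'\theta_\Phi(r)$. But Lemma~\ref{lem:Phi-critical} (equivalently Lemma~\ref{lem:Phi-capacity-monotone}) only gives $a_t\ge a_s+\theta_\Phi(r)(s-t)$. That suffices only if $\varepsilon$ can be taken below $(s-t)\theta_\Phi(r)$, i.e.\ essentially only when $\theta_\Phi$ is bounded below. So two of your claims fail once $\theta_\Phi(r)\to0$, for instance when $\Phi(r)=r^{\log(1/r)}$, where $\theta_\Phi(r)=1/\log(1/r)$:
\begin{itemize}
\item The slopes are not bounded away from zero: Lemma~\ref{lem:Phi-capacity-monotone} bounds the slope only by $-\theta_\Phi(r)$.
\item The signs of the two liminfs need not agree: the threshold is exactly where a liminf can be $0$ in one normalisation and negative in the other. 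Also, the $o(1)$ error in $a_s$ becomes $o(1)/\theta_\Phi(r)$ after renormalising.
\end{itemize}

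This cannot be repaired without an extra hypothesis. Take $\Phi(r)=r^{\log(1/r)}$, which is continuous, increasing and admissible; its window is $\log(1/\delta)\in[R,R^2]$ with $R=\log(1/r)$. Let $E=A\cup B\subset\mathbb{R}$, where $A,B$ are disjoint homogeneous dyadic Cantor sets with counting functions $G_X(u)=\log N_{e^{-u}}(X)$. Fix $0<t<s<1$ and let each $G_X$ alternate, periodically in $\log u$, between high stretches with $G_X(u)\ge(s-\eta)u$ and low points with $G_X(u)\le tu$. Since $G_X$ has slopes in $[0,1]$, each transition costs only a bounded factor in $u$. Put the two sets out of phase, so that every range $[e^{-\lambda}u,u]$ is a high stretch for $A$ or for $B$, with $\eta\to0$ and $\lambda\to\infty$ slowly.
\begin{itemize}
\item For every $R$, one of the sets is high on $[e^{-\lambda}R^2,R^2]$. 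Its uniform measure then has $\Phi$--energy at most $e^{-(s-o(1))R^2}=\Phi(r)^{s-o(1)}$, so $\underline{\dim}^{\,1}_\Phi E\ge s$.
\item Both sets have a low point in $[R,R^2]$. Single-scale covers therefore give $S^{t'}_{r,\Phi}(E)\lesssim r^{t'-t}$ for every $t'>t$ and all small $r$, so $\overline{\dim}_\Phi E\le t$.
\end{itemize}
So the statement needs a hypothesis such as $\liminf_{r\to0}\theta_\Phi(r)>0$, i.e.\ $\Phi(r)\ge r^{\kappa}$ for some $\kappa$. Under it your deduction is correct, because the two normalisations then differ by a bounded factor and $\log\log(1/\Phi(r))=o(\log(1/r))$. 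The paper's proof, which only asserts that the logarithmic discrepancy is subpolynomial, passes over exactly the same point.

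Apart from this, your comparison is sound in substance and differs from the paper's mainly in the upper bound. Your lower bound via Tool~I on $\{U\le2\gamma\}$ is fine; normalising $\mu^*|_F$ loses only a factor $2$.

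In your tail case, however, the heavy radius-$r$ ball you find need not contain $x$, so it does not reduce to the first case. Instead, put $B(x,2^{j+1}r)$ itself into the Vitali family and cut each selected enlarged ball into $\lesssim 2^{jn}$ pieces of diameter $r$. Their cost $\lesssim 2^{jn}r^s$ is paid by the ball's mass $\gtrsim 2^{jn}(r/\Phi(r))^s\gamma/\log(1/r)$; this is where $m=n$ is used. Sets of diameter up to $10r$ should likewise simply be subdivided.

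The resulting loss $\log(1/\Phi(r))$ is the right size. The paper's factor $\log_2(r/\Phi(r))+2$ in Proposition~\ref{prop:cap-cover} is too small: for $E=[0,1]^n$ one has $S^s_{r,\Phi}(E)\gtrsim\log(1/r)\,\Phi(r)^sC^{\,r,\Phi}_{s,n}(E)$. That bound fails for e.g.\ $\Phi(r)=r/\log(1/r)$, and its ``trivial'' case $\gamma\le2(\rho/r)^s$ is not trivial.
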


\begin{proof}
This follows from the comparison between $\Phi$--covering sums and
$\Phi$--capacities established in
Proposition~\ref{prop:cap-cover} below.
When $m=n$, the $\Phi$--kernels encode the same scale restrictions as the
covering quantities $S^s_{r,\Phi}(E)$.
Since the logarithmic discrepancy between
$S^s_{r,\Phi}(E)$ and $r^s C^{\,r,\Phi}_{s,n}(E)$ is subpolynomial as
$r\to0$, both approaches yield the same critical exponents.
\end{proof}

\begin{proposition}
\label{prop:cap-cover}
There exists a constant $K=K(n)>0$ depending only on $n$ such that for any
bounded set $E\subset\mathbb{R}^n$, any admissible $\Phi$, any
$m\in\{1,\dots,n\}$, any $s\in[0,m]$, and all sufficiently small $r>0$,
\begin{equation}\label{eq:cap-vs-cover-improved}
\Phi(r)^s\, C_{s,m}^{\,r,\Phi}(E)
\;\le\;
S^s_{r,\Phi}(E)
\;\le\;
K\Big(\log_2 \frac{r}{\Phi(r)} + 2\Big)\,
\Phi(r)^s\, C_{s,m}^{\,r,\Phi}(E).
\end{equation}
\end{proposition}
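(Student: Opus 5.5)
We treat the two inequalities in \eqref{eq:cap-vs-cover-improved} separately and follow the scheme of the $\theta$--case in \cite{BFF2021}. The basic tool is the equilibrium measure from Lemma~\ref{lem:equilibrium}. Lemma~\ref{lem:equilibrium} needs compactness, so we first pass to $\overline{E}$. Replacing each covering set by its closure does not change its diameter, so $S^s_{r,\Phi}(E)=S^s_{r,\Phi}(\overline E)$. Since $M(E)\subset M(\overline E)$, we also have $C^{\,r,\Phi}_{s,m}(E)\le C^{\,r,\Phi}_{s,m}(\overline E)$. Hence it suffices to prove the left inequality for $\overline E$. For the right inequality we work with $\overline E$ throughout and treat the passage from $C(\overline E)$ back to $C(E)$ as a separate point to check. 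Write $\rho=\Phi(r)$, let $\mu$ be the equilibrium measure on $\overline E$, let $U$ be its potential, and set $\gamma=\mathcal I^{\,r,\Phi}_{s,m}(\mu)=C^{-1}$.

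\emph{Left inequality.} Here we adapt the argument of Lemma~\ref{lem:pot-cover-tool}, but only use the potential bound $\mu$--almost everywhere. Let $\{U_i\}$ be an admissible cover with $\rho\le|U_i|\le r$. For $x,y\in U_i$ we have $\|x-y\|\le r$, and therefore $\varphi^{r,\Phi}_{s,m}(x-y)\ge(\rho/|U_i|)^s$. This holds for every $m$, because on $\{\|u\|\le r\}$ the kernel does not depend on $m$. Since $U=\gamma$ holds $\mu$--a.e., we get
\[
(\rho/|U_i|)^s\mu(U_i)^2\le\int_{U_i}U\,d\mu=\gamma\,\mu(U_i),
\]
and so $\mu(U_i)\le\gamma\rho^{-s}|U_i|^s$. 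Summing over $i$ and using $\sum_i\mu(U_i)\ge1$ gives $\sum_i|U_i|^s\ge\rho^sC$. Taking the infimum over covers yields the left inequality.

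\emph{Right inequality.} Now we use the lower bound $U(x)\ge\gamma$, valid for all $x\in\overline E$. Put $N=\lceil\log_2(r/\rho)\rceil$ and split $U(x)$ into three parts: the ball $\{\|x-y\|<\rho\}$, the dyadic annuli $2^k\rho\le\|x-y\|<2^{k+1}\rho$ for $0\le k<N$, and the tail $\{\|x-y\|\ge r\}$. On the $k$-th annulus the kernel is at most $2^{-ks}$, so that annulus contributes at most $2^{-ks}\mu(B(x,2^{k+1}\rho))$. The tail contributes at most $(\rho/r)^s$ in total, since $\mu$ is a probability measure.

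\emph{Case A: $C\le\tfrac12(r/\rho)^s$.} Then $(\rho/r)^s\le\gamma/2$, so the tail is at most $\gamma/2$ and the near part is at least $\gamma/2$. There are $N+1$ near pieces, so by pigeonholing there is a radius $t_x\in[\rho,r]$ with
\[
\mu(B(x,t_x))\ge\frac{\gamma}{2(N+2)}\,\Big(\frac{t_x}{2\rho}\Big)^s .
\]
Rescaling the radii by a dimensional factor, we may assume the enlarged balls satisfy $\rho\le 10t_x\le r$; the loss is only a constant in $K(n)$. Apply the Vitali $5r$--covering lemma to $\{B(x,t_x)\}$ to obtain disjoint balls $B_i$ such that $\{5B_i\}$ covers $E$. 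The diameters of the $5B_i$ lie in $[\rho,r]$, and
\[
\sum_i|5B_i|^s\lesssim\sum_i t_i^s\lesssim(N+2)\rho^sC\sum_i\mu(B_i)\le K(N+2)\rho^sC .
\]
This is the right inequality in Case A.

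\emph{Case B: $C>\tfrac12(r/\rho)^s$.} This is the step I expect to be the main obstacle, and it is where $m$ genuinely enters. In this case the tail may dominate, so we must use its exact form
\[
(\rho/r)^s\sum_{j\ge0}2^{-jm}\mu\bigl(B(x,2^{j+1}r)\bigr).
\]
Pigeonholing in $j$ with summable weights $2^{-j\varepsilon}$ gives a $j$ with $\mu(B(x,2^{j+1}r))\gtrsim\gamma(r/\rho)^s2^{j(m-\varepsilon)}$. The plan is to cover each such ball by balls of radius $r$ and use the Vitali lemma to make the selected large balls disjoint. The difficulty is that a ball of radius $2^{j}r$ in $\mathbb R^n$ needs about $2^{jn}$ balls of radius $r$, whereas the gain from the tail is only $2^{jm}$; this mismatch occurs precisely when $m<n$. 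I would first try to absorb this mismatch using the boundedness of $E$, which forces $2^j\le\diam E/r$, together with the freedom to cover at the finer admissible scale $\rho$. Failing that, I would compare $C_{s,m}$ with $C_{s,n}$ through the tail normalisation $(\rho/r)^s$. In either approach I would track constants carefully so that the final factor is still $K(n)\bigl(\log_2(r/\rho)+2\bigr)$.
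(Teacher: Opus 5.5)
You have a genuine gap in Case B, and it cannot be closed as stated.

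\textbf{What works.} Your left-hand inequality, including the reduction to $\overline E$, is correct. Your Case A is also correct and is essentially the paper's argument. The paper extracts a good radius from the layer-cake integral $s\rho^s\int_\rho^r\mu(B(x,t))\,t^{-s-1}\,dt$ and applies Besicovitch. You pigeonhole over dyadic annuli and apply Vitali instead. Both routes give the factor $\log_2(r/\rho)+O(1)$. The passage from $C(\overline E)$ back to $C(E)$ that you flag is harmless: the kernel is continuous, so finitely supported measures on the dense set $E$ approximate any $\mu\in M(\overline E)$ in energy.

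\textbf{Why Case B cannot be repaired.} The right-hand inequality as stated is false, and it fails exactly in your Case B. The paper does not close this case either. It calls $\gamma\le 2(\rho/r)^s$ ``trivial (cover $E$ by finitely many sets of diameter $r$)''. That only gives $S^s_{r,\Phi}(E)\le N_r(E)\,r^s$, with $N_r(E)$ unbounded, whereas in this case one only knows $\Phi(r)^sC\ge r^s/2$.

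For a concrete counterexample, take $E=[0,1]^n$. Every admissible cover satisfies $\sum_i\omega_n|U_i|^n\ge1$, where $\omega_n$ is the volume of the unit ball. Hence $S^s_{r,\Phi}(E)\ge r^{s-n}\sum_i|U_i|^n\ge r^{s-n}/\omega_n$.

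\emph{The case $m=1<n$.} On $E\times E$ the kernel is at least $(\rho/r)^s\,r/\sqrt n$. So $\Phi(r)^sC^{\,r,\Phi}_{s,1}(E)\le\sqrt n\,r^{s-1}$, and the ratio $S/(\Phi(r)^sC)$ is $\gtrsim r^{1-n}$. This is far larger than $K(\log_2(1/r)+2)$ when $\Phi(r)=r^2$. It is exactly your $2^{jn}$ versus $2^{jm}$ mismatch, and neither of your rescues helps:
\begin{itemize}
\item Using $2^j\le\diam E/r$ only turns the mismatch into a polynomial factor $(\diam E/r)^{n-m}$. Covering at finer admissible scales costs more, not less, since $s<n$.
\item Comparing with $C_{s,n}$ goes the wrong way, because $C_{s,m}\le C_{s,n}$.
\end{itemize}

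\emph{The case $m=n$.} Here your decomposition does close, provided you change two things:
\begin{itemize}
\item Pigeonhole with \emph{equal} weights over all $N+J+2$ pieces, where $J=\lceil\log_2(\diam E/r)\rceil$. Weights $2^{-j\varepsilon}$ would lose a factor $(\diam E/r)^\varepsilon$.
\item Cover each selected ball of radius $2^{j+1}r$ by $\lesssim_n 2^{jn}$ sets of diameter $r$.
\end{itemize}
The resulting factor is $\log_2(\diam E/\Phi(r))+O(1)$, and the extra $\log(1/r)$ cannot be removed. Indeed $\varphi^{r,\Phi}_{s,n}\ge(\rho/r)^s\min\{1,(r/\|x\|)^n\}$. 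Summing over dyadic cubes of side $2^jr$ for $0\le j\le\log_2(1/r)$, and applying Cauchy--Schwarz at each level, gives $\mathcal I^{\,r,\Phi}_{s,n}(\mu)\gtrsim_n(\rho/r)^s r^n\log_2(1/r)$ for every $\mu\in M(E)$. Hence $S^s_{r,\Phi}(E)/(\Phi(r)^sC^{\,r,\Phi}_{s,n}(E))\gtrsim_n\log_2(1/r)$. This is not $O(\log_2(r/\Phi(r))+2)$ for the admissible gauge $\Phi(r)=r/\log(1/r)$.

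\textbf{What is actually provable.} The left inequality holds for all $m$. The right inequality holds only for $m=n$, with a factor of order $\log_2(\diam E/\Phi(r))$. Only the case $m=n$ is used later, in Theorem~\ref{thm:Phi-dim-equals-profile} and through it Lemma~\ref{lem:proj-monotonicity}.
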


\begin{proof}
\noindent The lower bound follows from the equipotential property of equilibrium measures. The upper bound is obtained via a good-set reduction and a continuous scale-averaging argument on the intermediate region $\Phi(r)\le\|x-y\|<r$, combined with a Besicovitch covering argument.

As before, replacing $E$ by its closure we may assume that $E$ is compact.
Fix $0<r<1$ and set $\rho:=\Phi(r)$. Let
\[
\gamma
:=\Big(C^{\,r,\Phi}_{s,m}(E)\Big)^{-1}
=\inf_{\nu\in M(E)}\mathcal I^{\,r,\Phi}_{s,m}(\nu),
\]
and let $\mu\in M(E)$ be an equilibrium measure, so that
$\mathcal I^{\,r,\Phi}_{s,m}(\mu)=\gamma$.
Define the potential
\[
U(x):=\int \varphi^{r,\Phi}_{s,m}(x-y)\,d\mu(y).
\]
By Lemma~\ref{lem:equilibrium}, we have $U(x)\ge\gamma$ for all $x\in E$ and
$U(x)=\gamma$ for $\mu$--a.e.\ $x\in E$.

Write $L:=\log_2(r/\rho)$ (so $L\ge 0$), and note that
\begin{equation}\label{eq:logbound}
\log\frac{r}{\rho}\le (L+1)\log 2.
\end{equation}

\medskip\noindent
\textbf{1. Lower bound.}
Let $\{U_i\}$ be any admissible cover of $E$ with $\rho\le |U_i|\le r$.
Choose a Borel set $E_0\subset E$ with $\mu(E_0)=1$ and $U\equiv\gamma$ on $E_0$.
For each $i$ with $U_i\cap E_0\neq\emptyset$, pick $x_i\in U_i\cap E_0$ and set
$\delta_i:=|U_i|$. For any $y\in U_i$ we have $\|x_i-y\|\le \delta_i<r$, and hence,
by the definition of the kernel on $\{\|x\|<r\}$,
\[
\varphi^{r,\Phi}_{s,m}(x_i-y)\ge \Big(\frac{\rho}{\delta_i}\Big)^s.
\]
Therefore
\[
\gamma = U(x_i)\ge \int_{U_i}\varphi^{r,\Phi}_{s,m}(x_i-y)\,d\mu(y)
\ge \Big(\frac{\rho}{\delta_i}\Big)^s\mu(U_i),
\]
so $\mu(U_i)\le \gamma(\delta_i/\rho)^s$. Summing over $i$ and using that
$\{U_i\}$ covers $E_0$ gives
\[
1=\mu(E_0)\le \sum_i\mu(U_i)\le \gamma\rho^{-s}\sum_i\delta_i^s,
\]
hence $\sum_i|U_i|^s\ge \rho^s\gamma^{-1}=\rho^s C^{\,r,\Phi}_{s,m}(E)$.
Taking the infimum over admissible covers yields
\[
\rho^s C^{\,r,\Phi}_{s,m}(E)\le S^s_{r,\Phi}(E).
\]

\medskip\noindent
\textbf{2. Upper bound.}
Set
\[
G:=\{x\in E:\ U(x)\le 2\gamma\}.
\]
Since $\int_E U\,d\mu=\gamma$, Markov's inequality gives $\mu(G)\ge 1/2$.

Fix $x\in G$ and write $F_x(t):=\mu(B(x,t))$. On the intermediate region
$\rho\le \|x-y\|<r$, the kernel equals $(\rho/\|x-y\|)^s$, and a Stieltjes
integration by parts yields
\begin{equation}\label{eq:layercake-capcover}
\int_{\rho\le \|x-y\|<r}\Big(\frac{\rho}{\|x-y\|}\Big)^s\,d\mu(y)
=
s\rho^s\int_{\rho}^{r}\frac{F_x(t)}{t^{s+1}}\,dt
+\rho^s\frac{F_x(r)}{r^s}-F_x(\rho).
\end{equation}
Moreover, for $\|x-y\|\ge r$ we have
\[
0\le \varphi^{r,\Phi}_{s,m}(x-y)
=\Big(\frac{\rho}{r}\Big)^s\Big(\frac{r}{\|x-y\|}\Big)^m
\le \Big(\frac{\rho}{r}\Big)^s,
\]
so
\begin{equation}\label{eq:tailbound-capcover}
\int_{\|x-y\|\ge r}\varphi^{r,\Phi}_{s,m}(x-y)\,d\mu(y)\le \Big(\frac{\rho}{r}\Big)^s.
\end{equation}

Using the decomposition of $U(x)$ into the regions $\|x-y\|<\rho$,
$\rho\le \|x-y\|<r$, and $\|x-y\|\ge r$, and combining
\eqref{eq:layercake-capcover}-\eqref{eq:tailbound-capcover}, we obtain
\[
U(x)\ge s\rho^s\int_{\rho}^{r}\frac{F_x(t)}{t^{s+1}}\,dt - \Big(\frac{\rho}{r}\Big)^s.
\]
Since $x\in E$ implies $U(x)\ge\gamma$ (Lemma~\ref{lem:equilibrium}) and
$x\in G$ implies $U(x)\le 2\gamma$, we deduce that for all $x\in G$,
\begin{equation}\label{eq:avg-lower}
s\rho^s\int_{\rho}^{r}\frac{F_x(t)}{t^{s+1}}\,dt
\ge \gamma - \Big(\frac{\rho}{r}\Big)^s.
\end{equation}

Let $L:=\log_2(r/\rho)\ge0$. If $\gamma\le 2(\rho/r)^s$, then
$\rho^s\gamma^{-1}\gtrsim r^s$ and the desired upper bound for
$S^s_{r,\Phi}(E)$ is trivial (cover $E$ by finitely many sets of diameter $r$).
Thus we may assume $\gamma>2(\rho/r)^s$, so that \eqref{eq:avg-lower} yields
\[
s\rho^s\int_{\rho}^{r}\frac{F_x(t)}{t^{s+1}}\,dt \ge \frac{\gamma}{2}.
\]
Arguing by contradiction as usual, this implies that there exists
$t(x)\in[\rho,r/2]$ such that
\begin{equation}\label{eq:massball-capcover}
\mu(B(x,t(x))) \ge \frac{c_n}{L+2}\,\gamma\Big(\frac{t(x)}{\rho}\Big)^s,
\end{equation}
where $c_n>0$ depends only on $n$ (for instance one may take
$c_n=\frac{1}{8n\log 2}$ using $s\le n$ and $\log(r/\rho)\le (L+1)\log 2$).
The restriction $t(x)\le r/2$ ensures $\diam(B(x,t(x)))\le r$.

Now consider the family $\mathcal{B}:=\{B(x,t(x)):x\in G\}$.
By the Besicovitch covering theorem in $\mathbb{R}^n$, there exists a constant
$b_n$ depending only on $n$ and at most $b_n$ disjoint subfamilies
$\mathcal{B}_1,\dots,\mathcal{B}_{b_n}\subset\mathcal{B}$ whose unions cover $G$.
Fix $j$ and sum \eqref{eq:massball-capcover} over $B\in\mathcal{B}_j$ to obtain,
using disjointness,
\[
1\ge \sum_{B\in\mathcal{B}_j}\mu(B)
\ge \frac{c_n\gamma}{L+2}\sum_{B\in\mathcal{B}_j}\Big(\frac{t(B)}{\rho}\Big)^s.
\]
Therefore
\[
\sum_{B\in\mathcal{B}_j}\diam(B)^s
\le 2^s\sum_{B\in\mathcal{B}_j}t(B)^s
\le 2^n\,\frac{L+2}{c_n\gamma}\,\rho^s.
\]
Summing over $j=1,\dots,b_n$ yields an admissible cover of $G$ with total $s$--sum
bounded by $C(n)(L+2)\rho^s\gamma^{-1}$.

Finally, cover $E\setminus G$ using the level sets
$G_k:=\{x\in E:2^k\gamma< U(x)\le 2^{k+1}\gamma\}$, $k\ge0$.
Since $\int_E U\,d\mu=\gamma$, Markov gives $\mu(G_k)\le 2^{-k-1}$.
Applying the previous argument to each $G_k$ with threshold $2^{k+1}\gamma$
and summing over $k$ yields a cover of $E$ with total $s$--sum bounded by
$K(n)(L+2)\rho^s\gamma^{-1}$.
This gives
\[
S^s_{r,\Phi}(E)
\le
K(n)\Big(\log_2\frac{r}{\Phi(r)}+2\Big)\,\Phi(r)^s\,C^{\,r,\Phi}_{s,m}(E),
\]
as required.
\end{proof}
\begin{remark}
If one specialises to $\Phi(r)=r^{1/\theta}$ (so that admissible covers satisfy $r^{1/\theta}\le |U_i|\le r$) and takes $m=n$, then the kernel $\varphi^{r,\Phi}_{s,n}$ coincides, on the region $\|x\|<r$, with the $\theta$--kernel $\phi^{r,\theta}_{s,n}$ introduced by Burrell-Falconer-Fraser. In this case, \eqref{eq:cap-vs-cover-improved} recovers their capacity-to-covering estimate \cite[Lemma~4.4]{BFF2021}.

The proof given here is, however, structurally different from that of \cite[Lemma~4.4]{BFF2021}. Rather than estimating the potential by summing over dyadic annuli and subsequently splitting the resulting Besicovitch cover into several geometric regimes, we use a \emph{continuous scale-averaging (layer-cake) argument} on the intermediate region $\rho\le \|x-y\|<r$, combined with a \emph{good-set reduction} $G=\{U\le 2\gamma\}$ and a \emph{mass-removal covering scheme}. This approach avoids annulus-by-annulus bookkeeping and yields the same logarithmic loss, which appears naturally as $\log_2(r/\Phi(r))$ from the integration over scales.
\end{remark}

\section{Projection Theorem for $\Phi$--Intermediate Dimensions}
\label{sec:projection}

In this section we establish a Marstrand--Mattila type projection theorem for
$\Phi$--intermediate dimensions.
The result shows that $\Phi$--intermediate dimensions of orthogonal projections
are always bounded above by the corresponding $\Phi$--dimension profiles, and
that these bounds are attained for almost every projection direction.

\begin{theorem}
\label{thm:Phi-projection}
Let $E \subset \mathbb{R}^n$ be a bounded Borel set, and let $1 \le m < n$.
Then for every $V \in G(n,m)$ and every admissible gauge function $\Phi$,
\begin{equation}\label{eq:proj-upper}
\underline{\dim}_{\Phi}(\pi_V E)
\le
\underline{\dim}^{\,m}_{\Phi}E,
\qquad
\overline{\dim}_{\Phi}(\pi_V E)
\le
\overline{\dim}^{\,m}_{\Phi}E .
\end{equation}
Moreover, for $\gamma_{n,m}$--almost every $V \in G(n,m)$,
\begin{equation}\label{eq:proj-equality}
\underline{\dim}_{\Phi}(\pi_V E)
=
\underline{\dim}^{\,m}_{\Phi}E,
\qquad
\overline{\dim}_{\Phi}(\pi_V E)
=
\overline{\dim}^{\,m}_{\Phi}E .
\end{equation}
\end{theorem}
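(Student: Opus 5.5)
The plan follows the Burrell--Falconer--Fraser scheme: an upper bound valid for every $V$ via capacity comparison, and an almost-sure lower bound via averaging energies over $G(n,m)$. Throughout we may replace $E$ by its closure, since all quantities are defined through covers or capacities of bounded sets and are unchanged by closure at fixed scale. By Theorem~\ref{thm:Phi-dim-equals-profile} applied in $\mathbb{R}^m$ (with $m$ in the role of $n$), $\underline{\dim}_\Phi(\pi_V E)=\underline{\dim}^{\,m}_\Phi(\pi_V E)$, and likewise for the upper dimensions. So it suffices to compare $C^{\,r,\Phi}_{s,m}(\pi_V E)$, computed in $V\cong\mathbb{R}^m$, with $C^{\,r,\Phi}_{s,m}(E)$, computed in $\mathbb{R}^n$.

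\textbf{Upper bound for every $V$.} The kernel $\varphi^{r,\Phi}_{s,m}$ is radially non-increasing and $\pi_V$ is $1$-Lipschitz. Hence $\varphi^{r,\Phi}_{s,m}(\pi_V x-\pi_V y)\ge \varphi^{r,\Phi}_{s,m}(x-y)$ for all $x,y$. Every $\nu\in M(\pi_V E)$ lifts to some $\mu\in M(E)$ with $\pi_V\mu=\nu$; one takes a measurable selection, or argues with weak limits on the compact set $E$. For such a lift, $\mathcal I(\nu)=\iint\varphi(\pi_V x-\pi_V y)\,d\mu\,d\mu\ge \mathcal I(\mu)$. Taking infima gives $C^{\,r,\Phi}_{s,m}(\pi_V E)\le C^{\,r,\Phi}_{s,m}(E)$ for every $r$ and $s$. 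Passing to $\liminf$ and $\limsup$ of $\log C/(-\log\Phi(r))$ and using the monotonicity from Lemma~\ref{lem:Phi-capacity-monotone}, the profile inequalities follow, and with them \eqref{eq:proj-upper}.

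\textbf{Almost-sure lower bound.} The key estimate is the averaged kernel bound
\[
\int_{G(n,m)}\varphi^{r,\Phi}_{s,m}(\pi_V z)\,d\gamma_{n,m}(V)\le c\,\varphi^{r,\Phi}_{s,m}(z)\qquad(z\in\mathbb{R}^n),
\]
with $c=c(n,m,s)$ independent of $r$ and $\Phi$ (possibly $s<m$ is needed). To prove it, use the standard fact $\gamma_{n,m}\{V:\|\pi_V z\|\le t\}\lesssim \min\{1,(t/\|z\|)^m\}$. Write $\int\varphi(\pi_V z)\,d\gamma=\int_0^1\gamma\{\varphi(\pi_V z)>u\}\,du$ and treat the three ranges of $\|z\|$ (below $\Phi(r)$, between $\Phi(r)$ and $r$, above $r$) separately. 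When $\|z\|\ge r$, the tail exponent $m$ in \eqref{eq:phi-kernel} gives an integrand of order $(\rho/r)^s(r/\|z\|)^m$. When $\rho\le\|z\|<r$, the contribution is $\lesssim \int_0^{\|z\|}(t/\|z\|)^{m}\,d(-(\rho/t)^s)\lesssim(\rho/\|z\|)^s$, which converges because $s<m$. This computation is the main obstacle. It is exactly where the choice of tail exponent $m$ matters, and care is needed to get constants uniform in $r$ and in $\Phi$; I would reduce it to a one-dimensional integral in $t=\|\pi_V z\|$ and bound each piece by hand.

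\textbf{Borel--Cantelli and conclusion.} Fix $s<\underline{\dim}^{\,m}_\Phi E$, with $s<m$. Choose scales $r_k=2^{-k}$ and equilibrium measures $\mu_k$ with $\mathcal I(\mu_k)\le \Phi(r_k)^{s-\epsilon}$ for all large $k$. Fubini and the averaged bound give
\[
\int \mathcal I(\pi_V\mu_k)\,d\gamma_{n,m}(V)\le c\,\Phi(r_k)^{s-\epsilon}.
\]
By Markov and Borel--Cantelli, for $\gamma_{n,m}$-a.e.\ $V$ we have $\mathcal I(\pi_V\mu_k)\le k^2\Phi(r_k)^{s-\epsilon}$ for all large $k$. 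The factor $k^2$ is subpolynomial in $\Phi(r_k)^{-1}\le r_k^{-1}$. Hence $C^{\,r_k,\Phi}_{s,m}(\pi_V E)\ge \Phi(r_k)^{-(s-2\epsilon)}$ along the dyadic sequence. To pass from dyadic to all $r$, interpolate between consecutive scales; this may require mild regularity of $\Phi$, or else working with covering sums via Proposition~\ref{prop:cap-cover}, whose logarithmic factor is harmless. This yields $\underline{\dim}^{\,m}_\Phi(\pi_V E)\ge s-2\epsilon$ almost surely. Letting $s$ and $\epsilon$ run over countable sequences gives the lower-dimension equality. For the upper profile, use instead a sequence $r_k\to0$ realising the $\limsup$, chosen sparse enough that $\sum_k \Phi(r_k)^{\epsilon}<\infty$, and run the same Borel--Cantelli argument. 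Together with the upper bound this gives \eqref{eq:proj-equality}.
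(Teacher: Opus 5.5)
Your architecture is the paper's. The upper bound comes from $\varphi^{r,\Phi}_{s,m}(\pi_Vx-\pi_Vy)\ge\varphi^{r,\Phi}_{s,m}(x-y)$; your lifting of $\nu\in M(\pi_VE)$ to $M(E)$ supplies a step that the proof of Lemma~\ref{lem:proj-monotonicity} takes for granted. The lower bound comes from averaging over $G(n,m)$ plus Markov/Borel--Cantelli. The endgames differ. The paper averages the \emph{truncated} kernel $\widetilde{\varphi}^{\,r,\Phi}_s$ (Lemma~\ref{lem:kernel-average}) and turns the energy bound into covering sums of $\pi_VE$ through a Markov good set and Tool~I. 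You average the full kernel on $V$ and stay with capacities until a final appeal to Theorem~\ref{thm:Phi-dim-equals-profile} in $\mathbb{R}^m$. That route is workable, but two steps need repair.

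\textbf{First gap: the averaged kernel bound.} Your estimate with $c$ independent of $r$ is false. For $\|z\|\ge 4r$, the ratio $u=\|\pi_Vz\|/\|z\|$ has density $\asymp u^{m-1}$ on $(0,\tfrac12)$, so the tail piece alone gives
\[
\int_{G(n,m)}\varphi^{r,\Phi}_{s,m}(\pi_Vz)\,d\gamma_{n,m}(V)\ \gtrsim\ \Big(\frac{\rho}{r}\Big)^{s}\int_{r}^{\|z\|/2}\Big(\frac{r}{t}\Big)^{m}\frac{t^{m-1}}{\|z\|^{m}}\,dt\ \asymp\ \varphi^{r,\Phi}_{s,m}(z)\,\log\frac{\|z\|}{r}.
\]
An $m$-power tail on $\mathbb{R}^m$ is exactly critical, and $s<m$ does not help there. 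Since $E$ is bounded, the loss is a factor $O(\log(1/r))=\Phi(r)^{-o(1)}$, which is harmless for dimensions. However, your Markov thresholds must be taken relative to the log-corrected mean. As written, threshold $k^2\Phi(r_k)^{s-\epsilon}$ with $r_k=2^{-k}$ only gives exceptional sets of measure $\lesssim 1/k$, which is not summable. Truncating, as the paper does, avoids the logarithm.

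\textbf{Second gap: passing from dyadic $r$ to all small $r$.} This step is needed only for $\underline{\dim}_\Phi$; your $\limsup$ argument is fine modulo the logarithm. Neither of your suggestions closes it. Admissibility imposes no monotonicity or continuity on $\Phi$, so for $r\in[2^{-k-1},2^{-k})$ the window $[\Phi(r),r]$ need not resemble $[\Phi(2^{-k}),2^{-k}]$. Proposition~\ref{prop:cap-cover} compares $S^s_{r,\Phi}$ and $C^{\,r,\Phi}_{s,m}$ at the same $r$, so it transfers nothing between scales. The paper's proof has the same hole: it argues along a sequence $r_k$ and then asserts a $\liminf$ over all $r$.

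The missing idea is to discretise the \emph{pair} $(r,\Phi(r))$ rather than $r$ alone. With $\rho=\Phi(r)$,
\[
\varphi^{r,\Phi}_{s,m}(x)=\min\bigl\{1,\ (\rho/\|x\|)^s,\ \rho^s r^{m-s}\|x\|^{-m}\bigr\},
\]
which is non-decreasing in both $\rho$ and $r$. So if $r,r'$ and $\Phi(r),\Phi(r')$ agree within factors of $2$, the kernels agree within $2^m$, and so do the capacities of $E$ and of $\pi_VE$.

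Now consider each pair $(k,j)$, necessarily with $j\ge k$, such that some $r\in[2^{-k-1},2^{-k})$ has $\Phi(r)\in[2^{-j-1},2^{-j})$.
\begin{enumerate}
\item Fix one representative $r_{k,j}$ for the pair, and let $\mu_{k,j}$ be an equilibrium measure for $E$ at $r_{k,j}$.
\item Discard those $V$ for which $\mathcal I(\pi_V\mu_{k,j})$ exceeds $k^2j^2$ times its log-corrected mean. These exceptional sets have summable measure.
\item By Borel--Cantelli, a single full-measure set of $V$ has only finitely many bad pairs.
\item The incurred loss $k^2j^2\log(1/r)$ is $\Phi(r)^{-o(1)}$, since $k\le j\le\log_2(1/\Phi(r))$ and $\log(1/r)\le\log(1/\Phi(r))$.
\end{enumerate}
This gives $C^{\,r,\Phi}_{s,m}(\pi_VE)\ge\Phi(r)^{-s+o(1)}$ for all small $r$, with no regularity on $\Phi$.

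\textbf{The $\epsilon$ is unnecessary.} Since $s<\underline{\dim}^{\,m}_\Phi E$ already gives $C^{\,r,\Phi}_{s,m}(E)\ge\Phi(r)^{-s+o(1)}$, you get
\[
\liminf_{r\to0}\Big(\frac{\log C^{\,r,\Phi}_{s,m}(\pi_VE)}{-\log\Phi(r)}-s\Big)\ge0
\]
directly. By contrast, converting your exponent $s-2\epsilon$ through Lemma~\ref{lem:Phi-capacity-monotone} only yields $s-2\epsilon/\liminf_{r\to0}\theta_\Phi(r)$.
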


\subsection*{Proof of Theorem~\ref{thm:Phi-projection}}
We first establish the upper bounds \eqref{eq:proj-upper} for all projections
using monotonicity of $\Phi$--capacities.
We then prove the almost sure lower bounds by averaging $\Phi$--kernels over the
Grassmannian, deriving uniform potential estimates for projected measures, and
invoking the toolkit of Section~\ref{subsubsec:lowerbound-tools}.

\subsubsection*{Upper bounds: valid for all projections}

\begin{lemma}
\label{lem:proj-monotonicity}
For any $m$--plane $V \in G(n,m)$ and any admissible gauge $\Phi$, one has
\[
C^{\,r,\Phi}_{s,m}(\pi_V E) \;\le\; C^{\,r,\Phi}_{s,m}(E)
\]
for all $s \in [0,m]$ and all sufficiently small $r>0$.
In particular,
\[
\underline{\dim}_\Phi(\pi_V E)\le \underline{\dim}^{\,m}_\Phi E,
\qquad
\overline{\dim}_\Phi(\pi_V E)\le \overline{\dim}^{\,m}_\Phi E,
\]
which is exactly \eqref{eq:proj-upper}.
\end{lemma}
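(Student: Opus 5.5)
The inequality $C^{\,r,\Phi}_{s,m}(\pi_V E)\le C^{\,r,\Phi}_{s,m}(E)$ should follow from two facts. The first is that $\pi_V$ is $1$-Lipschitz. The second is that $\varphi^{r,\Phi}_{s,m}$ is radial and non-increasing in $\|x\|$. Here $C^{\,r,\Phi}_{s,m}(\pi_V E)$ is computed in $V\cong\mathbb{R}^m$ with the kernel \eqref{eq:phi-kernel} (with $n$ replaced by $m$), and $C^{\,r,\Phi}_{s,m}(E)$ is computed in $\mathbb{R}^n$ with the same tail exponent $m$. For all $x,y\in\mathbb{R}^n$ we have $\|\pi_V x-\pi_V y\|\le\|x-y\|$, and therefore
\[
\varphi^{r,\Phi}_{s,m}(\pi_V x-\pi_V y)\ \ge\ \varphi^{r,\Phi}_{s,m}(x-y).
\]
Given any $\nu\in M(\pi_V E)$, I will produce $\mu\in M(E)$ with $\pi_V{}_\#\mu=\nu$. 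Then
\[
\mathcal I^{\,r,\Phi}_{s,m}(\nu)=\iint \varphi^{r,\Phi}_{s,m}(\pi_Vx-\pi_Vy)\,d\mu(x)\,d\mu(y)\ \ge\ \mathcal I^{\,r,\Phi}_{s,m}(\mu)\ \ge\ \inf_{\eta\in M(E)}\mathcal I^{\,r,\Phi}_{s,m}(\eta).
\]
Taking the infimum over $\nu$ and then reciprocals gives the capacity inequality for every $r$ and every $s\in[0,m]$.

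The only step requiring care is the lifting of $\nu$ to $\mu$, and this is the main technical point. For compact $E$ it is standard. The map $\pi_V:E\to\pi_VE$ is a continuous surjection of compact metric spaces, so it admits a Borel measurable section $\sigma:\pi_V E\to E$ (for instance by a measurable selection theorem), and we take $\mu:=\sigma_\#\nu$. Alternatively, one can use the Hahn--Banach/Riesz argument: the functional $f\circ\pi_V\mapsto\int f\,d\nu$ extends positively to $C(E)$.

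For a general bounded Borel $E$ I would first reduce to the closure $\overline E$, as is done in the proof of Proposition~\ref{prop:cap-cover}. The justification is twofold. First, the covering sums satisfy $S^s_{r,\Phi}(\overline E)$ comparable to $S^s_{r,\Phi}(E)$ after an arbitrarily small enlargement of the covering sets, and such an enlargement does not affect the critical exponents by Lemma~\ref{lem:Phi-critical}. Second, $\overline{\pi_V E}=\pi_V\overline E$. If one prefers to avoid this reduction, $\pi_V E$ is analytic and the Jankov--von Neumann uniformization theorem gives a universally measurable section, which again produces the lift $\mu$.

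It then remains to pass from capacities to dimensions. The capacity inequality gives, for each $s$,
\[
\liminf_{r\to0}\frac{\log C^{\,r,\Phi}_{s,m}(\pi_VE)}{-\log\Phi(r)}\le\liminf_{r\to0}\frac{\log C^{\,r,\Phi}_{s,m}(E)}{-\log\Phi(r)},
\]
and the analogous inequality holds with $\limsup$. Hence the set of admissible $s$ in Definition~\ref{def:Phi-dim-profiles} for $\pi_VE$ is contained in that for $E$, which yields $\underline{\dim}^{\,m}_\Phi(\pi_VE)\le\underline{\dim}^{\,m}_\Phi E$, and similarly for the upper profile.

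Finally, $\pi_VE$ lives in $V\cong\mathbb{R}^m$, where the ambient dimension equals $m$. Theorem~\ref{thm:Phi-dim-equals-profile}, applied in $\mathbb{R}^m$ (and using the bi-Lipschitz invariance in Proposition~\ref{prop:basic}(ii) to identify $V$ with $\mathbb{R}^m$), therefore gives $\underline{\dim}_\Phi(\pi_VE)=\underline{\dim}^{\,m}_\Phi(\pi_VE)$ and $\overline{\dim}_\Phi(\pi_VE)=\overline{\dim}^{\,m}_\Phi(\pi_VE)$. Combining these equalities with the profile inequalities proves \eqref{eq:proj-upper}.
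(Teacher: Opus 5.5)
Your proposal is correct and follows the paper's argument. The $1$-Lipschitz projection and the radial, non-increasing kernel give $\varphi^{r,\Phi}_{s,m}(\pi_Vx-\pi_Vy)\ge\varphi^{r,\Phi}_{s,m}(x-y)$, hence the energy and capacity comparison, and Theorem~\ref{thm:Phi-dim-equals-profile} applied in $V\cong\mathbb{R}^m$ turns the profile inequality into \eqref{eq:proj-upper}. Your explicit lifting of every $\nu\in M(\pi_VE)$ to some $\mu\in M(E)$ fills a step that the paper's ``taking infima over $\mu\in M(E)$'' tacitly needs; note only that the closure route also requires $C^{\,r,\Phi}_{s,m}(\overline E)=C^{\,r,\Phi}_{s,m}(E)$ (true by weak-$\ast$ continuity of the energy for the continuous kernel), which your Jankov--von Neumann alternative avoids.
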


\begin{proof}
Let $\mu \in M(E)$ be a Borel probability measure on $E$, and define
$\mu_V := \mu \circ \pi_V^{-1} \in M(\pi_V E)$.
Let $\varphi^{r,\Phi}_{s,m}$ denote the $\Phi$--kernel on $\mathbb{R}^n$
and $\varphi^{r,\Phi,V}_{s,m}$ the corresponding kernel on
$V \cong \mathbb{R}^m$.
Since $\pi_V$ is $1$--Lipschitz and the kernels are radial and
non-increasing,
\[
\varphi^{r,\Phi,V}_{s,m}(\pi_V(x-y))
\;\ge\;
\varphi^{r,\Phi}_{s,m}(x-y)
\quad \text{for all } x,y \in E.
\]
Integrating against $\mu(x)\mu(y)$ yields
\[
\mathcal{I}^{\,r,\Phi}_{s,m}(\mu_V)
\;\ge\;
\mathcal{I}^{\,r,\Phi}_{s,m}(\mu).
\]
Taking infima over $\mu \in M(E)$ and inverting gives
$C^{\,r,\Phi}_{s,m}(\pi_V E) \le C^{\,r,\Phi}_{s,m}(E)$.
The dimension inequalities then follow from the definitions in
Section~2, the profile definition
(Definition~\ref{def:Phi-dim-profiles}), and the identity
$\underline{\dim}_\Phi(\cdot)=\underline{\dim}^{\,n}_\Phi(\cdot)$,
$\overline{\dim}_\Phi(\cdot)=\overline{\dim}^{\,n}_\Phi(\cdot)$
(Theorem~\ref{thm:Phi-dim-equals-profile}).
\end{proof}

\subsubsection*{Kernel averaging and energy estimates}

\begin{lemma}
\label{lem:kernel-average}
Let $0<s<m<n$. There exist constants $A_{n,m},B_{n,m}>0$, depending only on
$n$, $m$, and $s$, such that for all sufficiently small $r>0$ and all
$x\in\mathbb{R}^n$,
\[
A_{n,m}
\int_{G(n,m)} \widetilde{\varphi}^{\,r,\Phi}_s(\pi_V x)\,d\gamma_{n,m}(V)
\;\le\;
\varphi^{r,\Phi}_{s,m}(x)
\;\le\;
B_{n,m}
\int_{G(n,m)} \widetilde{\varphi}^{\,r,\Phi}_s(\pi_V x)\,d\gamma_{n,m}(V).
\]
Here $\widetilde{\varphi}^{\,r,\Phi}_s:\mathbb{R}^m\to[0,\infty)$ denotes the
truncated $\Phi$--kernel defined by
\[
\widetilde{\varphi}^{\,r,\Phi}_s(u)
=
\begin{cases}
1, & \|u\|<\Phi(r),\\[1ex]
\big(\dfrac{\Phi(r)}{\|u\|}\big)^{s},
& \Phi(r)\le \|u\|<r,\\[2ex]
0, & \|u\|\ge r,
\end{cases}
\]
and $\gamma_{n,m}$ is the Haar probability measure on the Grassmannian
$G(n,m)$.
\end{lemma}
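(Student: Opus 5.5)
The plan is a layer-cake argument: average the truncated kernel over $V$ by reducing everything to measures of sets $\{V:\|\pi_V x\|<\delta\}$. The one external input is the standard Grassmannian estimate (Mattila, \emph{Geometry of Sets and Measures}, Lemma~3.11). It says there are constants $0<a_{n,m}\le b_{n,m}$ such that for all $x\neq0$ and $\delta>0$,
\[
a_{n,m}\min\Big\{1,\Big(\frac{\delta}{\|x\|}\Big)^m\Big\}
\le
\gamma_{n,m}\{V:\|\pi_V x\|<\delta\}
\le
b_{n,m}\min\Big\{1,\Big(\frac{\delta}{\|x\|}\Big)^m\Big\}.
\]
The case $x=0$ is trivial, since both sides of the lemma equal $1$. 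I will take this estimate as given, deriving it if needed from rotation invariance and the fact that $\pi_V x$ for random $V$ has the law of $\|x\|$ times the projection of a uniform unit vector. The restriction $m<n$ is what makes this a genuine $m$-dimensional small-ball bound.

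Write $\rho=\Phi(r)$ and $A(x):=\int_{G(n,m)}\widetilde{\varphi}^{\,r,\Phi}_s(\pi_V x)\,d\gamma_{n,m}(V)$. Since $0\le\widetilde{\varphi}^{\,r,\Phi}_s\le1$, Fubini and the layer-cake formula give
\[
A(x)=\int_0^1\gamma_{n,m}\{V:\widetilde{\varphi}^{\,r,\Phi}_s(\pi_V x)>\lambda\}\,d\lambda .
\]
The superlevel sets are centred balls $\{\|u\|<R(\lambda)\}$ with $R(\lambda)=\min\{\rho\lambda^{-1/s},r\}$, up to boundary points, which are $\gamma_{n,m}$-null for $x\ne0$ or irrelevant. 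Here $R(\lambda)=r$ exactly when $\lambda\le(\rho/r)^s$. Inserting the Grassmannian estimate gives
\[
A(x)\asymp\int_0^1\min\{1,(R(\lambda)/\|x\|)^m\}\,d\lambda,
\]
and it remains to evaluate this integral in the three regimes of \eqref{eq:phi-kernel}.

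\emph{Case $\|x\|<\rho$.} Here $R(\lambda)\ge\rho>\|x\|$ for every $\lambda\le1$, so the integrand is $1$ and $A(x)\asymp1=\varphi^{r,\Phi}_{s,m}(x)$.

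\emph{Case $\rho\le\|x\|<r$.} The integrand is $\asymp1$ for $\lambda\le(\rho/\|x\|)^s$, which contributes $(\rho/\|x\|)^s$. For larger $\lambda$ it equals $(\rho/\|x\|)^m\lambda^{-m/s}$. Because $s<m$, this tail integral is dominated by its lower endpoint and is $\asymp_{m,s}(\rho/\|x\|)^m\big((\rho/\|x\|)^s\big)^{1-m/s}=(\rho/\|x\|)^s$. Hence $A(x)\asymp\varphi^{r,\Phi}_{s,m}(x)$ in this regime too.

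\emph{Case $\|x\|\ge r$.} Now $R(\lambda)\le r\le\|x\|$ throughout. The range $\lambda\le(\rho/r)^s$ contributes $(\rho/r)^s(r/\|x\|)^m$. The remaining range contributes $\|x\|^{-m}\rho^m\int_{(\rho/r)^s}^1\lambda^{-m/s}\,d\lambda\asymp\|x\|^{-m}\rho^m(\rho/r)^{s-m}$, which is the same quantity. So $A(x)\asymp(\rho/r)^s(r/\|x\|)^m=\varphi^{r,\Phi}_{s,m}(x)$.

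In each case the comparison constants depend only on $a_{n,m}$, $b_{n,m}$ and the factor $(m/s-1)^{-1}$ coming from the $\lambda$-integral. This gives $A_{n,m},B_{n,m}$ depending on $n,m,s$ only, uniformly in $r$ and $x$. The phrase ``sufficiently small $r$'' is needed only to guarantee $\rho\le r<1$.

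The main obstacle is the two-sided Grassmannian small-ball estimate, and in particular its lower bound, which is needed for the inequality $\varphi\le B\cdot(\text{average})$. I will quote it from Mattila rather than reprove it. The only other point needing care is the tail integral in $\lambda$, where $s<m$ is essential: at $s=m$ the middle regime acquires a logarithmic factor $\log(\|x\|/\rho)$ and the comparison fails uniformly.
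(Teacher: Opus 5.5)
Your proof is correct and is essentially the paper's argument: the substitution $t=\Phi(r)\lambda^{-1/s}$ turns your level-set formula $\int_0^1\gamma_{n,m}\{V:\widetilde{\varphi}^{\,r,\Phi}_s(\pi_V x)>\lambda\}\,d\lambda$ into exactly the paper's radial layer-cake expression $(\Phi(r)/r)^sP_x(r)+s\Phi(r)^s\int_{\Phi(r)}^{r}t^{-s-1}P_x(t)\,dt$, and both proofs then use the same two-sided Grassmannian small-ball estimate and the same three-regime evaluation. Your regime computations are cleaner than the paper's, whose auxiliary integral $I(x)$ carries a spurious factor $t^{-m}$ and drops the prefactor $\Phi(r)^s$. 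One small nit: your middle-regime tail integral satisfies only the upper bound $\le\frac{s}{m-s}(\Phi(r)/\|x\|)^s$, not $\asymp$, since it vanishes at $\|x\|=\Phi(r)$; this is harmless because the first piece already gives the lower bound.
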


\begin{proof}
We begin with a layer-cake representation for the truncated kernel.
For $u\in\mathbb{R}^m$ one has
\[
\widetilde{\varphi}^{\,r,\Phi}_s(u)
=
\Big(\frac{\Phi(r)}{r}\Big)^s \mathbf{1}_{\{\|u\|<r\}}
+
s\,\Phi(r)^s
\int_{\Phi(r)}^{r} t^{-s-1}\mathbf{1}_{\{\|u\|<t\}}\,dt .
\]
Averaging over $V\in G(n,m)$ and applying Fubini’s theorem yields
\[
\int_{G(n,m)} \widetilde{\varphi}^{\,r,\Phi}_s(\pi_V x)\,d\gamma_{n,m}(V)
=
\Big(\frac{\Phi(r)}{r}\Big)^s P_x(r)
+
s\,\Phi(r)^s
\int_{\Phi(r)}^{r} t^{-s-1} P_x(t)\,dt,
\]
where
\[
P_x(t)
:=\gamma_{n,m}\{V\in G(n,m):\|\pi_V x\|<t\}.
\]

By the classical Kaufman-Mattila projection estimate, there exist constants
$0<\underline{\Theta}_{n,m}\le \overline{\Theta}_{n,m}<\infty$, depending only
on $n,m$, such that for all $x\neq 0$ and all $t>0$,
\[
\underline{\Theta}_{n,m}
\min\Big\{1,\Big(\frac{t}{\|x\|}\Big)^m\Big\}
\;\le\;
P_x(t)
\;\le\;
\overline{\Theta}_{n,m}
\min\Big\{1,\Big(\frac{t}{\|x\|}\Big)^m\Big\}.
\]

Substituting these bounds gives
\[
\int_{G(n,m)} \widetilde{\varphi}^{\,r,\Phi}_s(\pi_V x)\,d\gamma_{n,m}(V)
\sim_{n,m}
\Big(\frac{\Phi(r)}{r}\Big)^s
\min\Big\{1,\Big(\frac{r}{\|x\|}\Big)^m\Big\}
+
\Phi(r)^s
\int_{\Phi(r)}^{r} t^{-s-1}
\min\Big\{1,\Big(\frac{t}{\|x\|}\Big)^m\Big\}\,dt .
\]

We now compare this expression with the piecewise definition of
$\varphi^{r,\Phi}_{s,m}(x)$. Set
\[
I(x)
:=
\int_{\Phi(r)}^r \frac{1}{t^{s+m}}
\min\Bigl\{1,\Bigl(\frac{t}{\|x\|}\Bigr)^m\Bigr\}\,\frac{dt}{t}.
\]
We distinguish three regimes.

If $\|x\|<\Phi(r)$, then $t>\|x\|$ for all $t\in[\Phi(r),\,r]$, so the minimum
equals $1$ throughout the integration range. Hence
\[
I(x)=\int_{\Phi(r)}^r t^{-(s+m+1)}\,dt \sim_{m} 1,
\]
with constants depending only on $m$ and $s$. Since
$\varphi^{r,\Phi}_{s,m}(x)=1$ in this regime, the two quantities are
comparable.

If $\Phi(r)\le\|x\|<r$, then the minimum equals $(t/\|x\|)^m$ for
$t<\|x\|$ and equals $1$ for $t\ge\|x\|$. Accordingly,
\[
I(x)
=
\|x\|^{-m}\int_{\Phi(r)}^{\|x\|} t^{-(s+1)}\,dt
+
\int_{\|x\|}^{r} t^{-(s+m+1)}\,dt.
\]
Since $0<s<m$, the first term dominates and satisfies
\[
I(x)\sim_{m} \|x\|^{-s}\Phi(r)^s
=
\bigl(\tfrac{\Phi(r)}{\|x\|}\bigr)^s
=
\varphi^{r,\Phi}_{s,m}(x).
\]

Finally, if $\|x\|\ge r$, then $(t/\|x\|)^m\le 1$ for all
$t\in[\Phi(r),\,r]$, and therefore
\[
I(x)
=
\|x\|^{-m}\int_{\Phi(r)}^r t^{-(s+1)}\,dt
\sim
\bigl(\tfrac{\Phi(r)}{r}\bigr)^s
\bigl(\tfrac{r}{\|x\|}\bigr)^m
=
\varphi^{r,\Phi}_{s,m}(x).
\]

In all three regimes, $I(x)$ is comparable to $\varphi^{r,\Phi}_{s,m}(x)$
up to multiplicative constants depending only on $n$, $m$ and $s$.
The decay factor $(t/\|x\|)^m$ arises from the estimate
$\gamma_{n,m}\{V:\|\pi_V x\|<t\}\sim (t/\|x\|)^m$, reflecting the
probability that a random $m$--dimensional projection of a fixed vector
has length less than $t$. This geometric phenomenon is precisely the
origin of the $m$--power tail in the definition of
$\varphi^{r,\Phi}_{s,m}(x)$.
\end{proof}

\begin{lemma}
\label{lem:energy-bound}
Fix $s\in[0,m)$ and assume that
\[
s<\underline{\dim}^{\,m}_{\Phi} E .
\]
Let $(r_k)_{k\ge1}$ be a decreasing sequence with $r_k\to0$ and
$r_k\le 2^{-k}$. For each $k$, let $\mu_k$ be an equilibrium measure
realizing the capacity $C^{\,r_k,\Phi}_{s,m}(E)$, and set
\[
\gamma_k
:=
\mathcal{I}^{\,r_k,\Phi}_{s,m}(\mu_k)
=
\bigl(C^{\,r_k,\Phi}_{s,m}(E)\bigr)^{-1}.
\]
Then for every $\varepsilon>0$,
\[
\sum_{k=1}^\infty
r_k^\varepsilon
\int_{G(n,m)}
\mathcal{I}_{\widetilde{\varphi}^{\,r_k,\Phi}_s}
\bigl((\mu_k)_V\bigr)
\,d\gamma_{n,m}(V)
<\infty .
\]
In particular, for $\gamma_{n,m}$--almost every $V\in G(n,m)$ there
exists a constant $M_{V}<\infty$ such that
\[
\mathcal{I}_{\widetilde{\varphi}^{\,r_k,\Phi}_s}
\bigl((\mu_k)_V\bigr)
\le M_{V}\, r_k^{-\varepsilon}\,\gamma_k
\qquad\text{for all }k\ge1 .
\]
\end{lemma}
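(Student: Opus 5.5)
The plan is to average the projected energies over the Grassmannian using Lemma~\ref{lem:kernel-average}, reducing everything to the unprojected energies $\gamma_k$, and then apply a Borel--Cantelli-type argument.

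\textbf{Step 1 (averaging).} For each $k$, write $(\mu_k)_V=\mu_k\circ\pi_V^{-1}$, so that
\[
\mathcal{I}_{\widetilde{\varphi}^{\,r_k,\Phi}_s}\bigl((\mu_k)_V\bigr)=\iint \widetilde{\varphi}^{\,r_k,\Phi}_s(\pi_V(x-y))\,d\mu_k(x)\,d\mu_k(y).
\]
All integrands are non-negative and jointly measurable, so Tonelli applies. Integrating over $V$ and using the left inequality of Lemma~\ref{lem:kernel-average} pointwise in $x-y$ gives
\[
\int_{G(n,m)}\mathcal{I}_{\widetilde{\varphi}^{\,r_k,\Phi}_s}\bigl((\mu_k)_V\bigr)\,d\gamma_{n,m}(V)\le A_{n,m}^{-1}\,\mathcal{I}^{\,r_k,\Phi}_{s,m}(\mu_k)=A_{n,m}^{-1}\gamma_k ,
\]
valid once $r_k$ is small enough. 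This needs $0<s<m$. The case $s=0$ is trivial because every kernel is then at most $1$, which gives the same bound with constant $1$.

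\textbf{Step 2 (summability).} Since $\gamma_k\le 1$ (the kernel is bounded by $1$) and $r_k\le 2^{-k}$,
\[
\sum_k r_k^{\varepsilon}\int_{G(n,m)}\mathcal{I}_{\widetilde{\varphi}}\bigl((\mu_k)_V\bigr)\,d\gamma_{n,m}\le A_{n,m}^{-1}\sum_k 2^{-k\varepsilon}<\infty ,
\]
with finitely many small-$k$ terms each bounded by $1$. This proves the displayed series bound. The hypothesis $s<\underline{\dim}^{\,m}_{\Phi}E$ is not needed for finiteness.

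\textbf{Step 3 (almost-sure bound).} This is the main obstacle. The naive conclusion from Step 2 is only that $\sum_k r_k^{\varepsilon}\,\mathcal{I}((\mu_k)_V)<\infty$ for a.e.\ $V$. That yields $\mathcal{I}((\mu_k)_V)\le M_V r_k^{-\varepsilon}$, not $M_V r_k^{-\varepsilon}\gamma_k$. To retain the factor $\gamma_k$, I would run the argument above with the normalized quantities $\gamma_k^{-1}\mathcal{I}((\mu_k)_V)$ instead. By Step 1, their $\gamma_{n,m}$-average is at most $A_{n,m}^{-1}$, uniformly in $k$. Hence
\[
\int\sum_k r_k^{\varepsilon}\gamma_k^{-1}\mathcal{I}\bigl((\mu_k)_V\bigr)\,d\gamma_{n,m}(V)\le A_{n,m}^{-1}\sum_k 2^{-k\varepsilon}<\infty .
\]
By monotone convergence, the inner sum is finite for $\gamma_{n,m}$-a.e.\ $V$; call its value $M_V$. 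Each individual term is bounded by the sum, so $\mathcal{I}((\mu_k)_V)\le M_V r_k^{-\varepsilon}\gamma_k$ for all $k$.

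This normalized series also dominates the version in the statement up to the factor $\sup_k\gamma_k\le1$. I would present the normalized form as the operative estimate. The hypothesis $s<\underline{\dim}^{\,m}_{\Phi}E$ enters only later, through the guarantee $\gamma_k\lesssim\Phi(r_k)^{s}$ along suitable scales, so that Tool~I (Lemma~\ref{lem:pot-cover-tool}) can be applied.

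The delicate points to check are measurability of $V\mapsto\mathcal{I}((\mu_k)_V)$, which follows from continuity of projections together with Tonelli, and the requirement in Lemma~\ref{lem:kernel-average} that $r$ be sufficiently small. The latter affects only finitely many $k$, and each of those terms is bounded by $\gamma_k^{-1}<\infty$.
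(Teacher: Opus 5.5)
Your argument is correct. Steps 1 and 2 match the paper. Your Step 3 departs from the paper's argument, and in doing so repairs it.

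\textbf{Where you agree with the paper.} Both of you integrate the left inequality of Lemma~\ref{lem:kernel-average} against $\mu_k\times\mu_k$ to get $\int_{G(n,m)}\mathcal{I}_{\widetilde{\varphi}^{\,r_k,\Phi}_s}((\mu_k)_V)\,d\gamma_{n,m}(V)\lesssim\gamma_k$.

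\textbf{Where you differ.} The paper applies Fubini to the unnormalized series $\sum_k r_k^{\varepsilon}\,\mathcal{I}_{\widetilde{\varphi}^{\,r_k,\Phi}_s}((\mu_k)_V)$. It concludes that $r_k^{\varepsilon}\,\mathcal{I}_{\widetilde{\varphi}^{\,r_k,\Phi}_s}((\mu_k)_V)$ is bounded for a.e.\ $V$, and then asserts the bound $M_V r_k^{-\varepsilon}\gamma_k$. As you point out, this only yields $M_V r_k^{-\varepsilon}$. Since $\gamma_k\le 1$, and $\gamma_k\to0$ in the intended application, the factor $\gamma_k$ does not follow.

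You instead run the Fubini/Borel--Cantelli argument on $\gamma_k^{-1}\mathcal{I}_{\widetilde{\varphi}^{\,r_k,\Phi}_s}((\mu_k)_V)$, whose Grassmannian averages are uniformly bounded. This produces exactly the missing factor, and it also gives the first display because $\gamma_k\le 1$.

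The factor matters downstream. In the proof of the lower bounds in Theorem~\ref{thm:Phi-projection}, Tool~I turns it into $S^s_{r_k,\Phi}(\pi_VE)\gtrsim r_k^{\varepsilon}\Phi(r_k)^s\gamma_k^{-1}$. Without it one only gets $r_k^{\varepsilon}\Phi(r_k)^s$, which carries no dimensional information.

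You are also right that the hypothesis $s<\underline{\dim}^{\,m}_{\Phi}E$ plays no role here. The paper uses it only to obtain $\gamma_k\le C_s r_k^{-\delta}$, and then asks for $\varepsilon>\delta$. That bound is automatic from $\gamma_k\le1$, with $\delta$ as small as you like.

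\textbf{A correction at $s=0$.} The paper also passes over this case, since Lemma~\ref{lem:kernel-average} is stated only for $0<s<m$. Your reason, that all kernels are bounded by $1$, gives $\mathcal{I}_{\widetilde{\varphi}^{\,r_k,\Phi}_0}((\mu_k)_V)\le 1$. That handles the first display, but not your Step~3, where you need the Grassmannian average to be $\lesssim\gamma_k$ and $\gamma_k$ typically tends to $0$.

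The right justification is that $\widetilde{\varphi}^{\,r,\Phi}_0=\mathbf{1}_{\{\|u\|<r\}}$. So its average at $z$ is $\gamma_{n,m}\{V:\|\pi_Vz\|<r\}\le\overline{\Theta}_{n,m}\min\{1,(r/\|z\|)^m\}=\overline{\Theta}_{n,m}\,\varphi^{r,\Phi}_{0,m}(z)$, by the projection estimate quoted in Lemma~\ref{lem:kernel-average}. The constant cannot in general be $1$: for $n=4$, $m=1$ the ratio tends to $4/\pi$ as $r/\|z\|\to0$.

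This case is irrelevant for the projection theorem anyway, since the lower bound $\underline{\dim}_\Phi(\pi_VE)\ge 0$ is trivial.
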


\begin{proof}
Applying Lemma~\ref{lem:kernel-average} to the difference $x-y$ and
integrating with respect to $\mu_k(x)\mu_k(y)$, we obtain
\[
A_{n,m}
\int_{G(n,m)}
\mathcal{I}_{\widetilde{\varphi}^{\,r_k,\Phi}_s}
\bigl((\mu_k)_V\bigr)\,
d\gamma_{n,m}(V)
\le
\mathcal{I}^{\,r_k,\Phi}_{s,m}(\mu_k)
\le
B_{n,m}
\int_{G(n,m)}
\mathcal{I}_{\widetilde{\varphi}^{\,r_k,\Phi}_s}
\bigl((\mu_k)_V\bigr)\,
d\gamma_{n,m}(V).
\]
Since $\mu_k$ is an equilibrium measure, the middle term equals
$\gamma_k$. Hence there exists a constant $C_{n,m}>0$ such that
\[
\int_{G(n,m)}
\mathcal{I}_{\widetilde{\varphi}^{\,r_k,\Phi}_s}
\bigl((\mu_k)_V\bigr)\,
d\gamma_{n,m}(V)
\le
C_{n,m}\,\gamma_k
\qquad\text{for all }k\ge1 .
\]

Because $s<\underline{\dim}^{\,m}_{\Phi}E$, there exist constants
$\delta>0$ and $C_s<\infty$ such that
\[
\gamma_k
=
\bigl(C^{\,r_k,\Phi}_{s,m}(E)\bigr)^{-1}
\le C_s\, r_k^{-\delta}
\qquad\text{for all sufficiently large }k.
\]
Multiplying the previous inequality by $r_k^\varepsilon$ and summing
over $k$, we obtain
\[
\sum_{k=1}^\infty
r_k^\varepsilon
\int_{G(n,m)}
\mathcal{I}_{\widetilde{\varphi}^{\,r_k,\Phi}_s}
\bigl((\mu_k)_V\bigr)\,
d\gamma_{n,m}(V)
\le
C_{n,m}
\sum_{k=1}^\infty
r_k^\varepsilon \gamma_k .
\]
Since $r_k\le 2^{-k}$ and $\gamma_k\le C_s r_k^{-\delta}$, the right-hand
side is dominated by a convergent geometric series provided
$\varepsilon>\delta$. This proves the stated summability.

By Fubini’s theorem, it follows that for $\gamma_{n,m}$--almost every
$V\in G(n,m)$ the series
\[
\sum_{k=1}^\infty
r_k^\varepsilon\,
\mathcal{I}_{\widetilde{\varphi}^{\,r_k,\Phi}_s}
\bigl((\mu_k)_V\bigr)
\]
converges. In particular, the sequence
\[
\bigl\{
r_k^\varepsilon
\mathcal{I}_{\widetilde{\varphi}^{\,r_k,\Phi}_s}
((\mu_k)_V)
\bigr\}_{k\ge1}
\]
is bounded, which yields the existence of a constant
$M_{V}<\infty$ with the desired property.
\end{proof}

\subsubsection*{Almost sure lower bounds: application of the toolkit}
\begin{proof}[Proof of the lower bounds in Theorem~\ref{thm:Phi-projection}]
Fix $s<\underline{\dim}^{\,m}_{\Phi}E$. By the definition of the lower
$\Phi$--intermediate profile dimension, there exists a decreasing
sequence $(r_k)_{k\ge1}$ with $r_k\to0$ and $r_k\le 2^{-k}$, together
with equilibrium measures $\mu_k$ realizing
$C^{\,r_k,\Phi}_{s,m}(E)$, such that
\[
\gamma_k
:=
\mathcal{I}^{\,r_k,\Phi}_{s,m}(\mu_k)
=
\bigl(C^{\,r_k,\Phi}_{s,m}(E)\bigr)^{-1}
\]
grows at most polynomially in $1/r_k$ as $k\to\infty$.

Applying Lemma~\ref{lem:energy-bound}, we obtain that for
$\gamma_{n,m}$--almost every $V\in G(n,m)$ there exist constants
$\varepsilon>0$ and $M_{V}<\infty$ such that, for all sufficiently large
$k$,
\[
\mathcal{I}_{\widetilde{\varphi}^{\,r_k,\Phi}_s}
\bigl((\mu_k)_V\bigr)
\le
M_{V}\, r_k^{-\varepsilon}\,\gamma_k.
\]
Fix such a subspace $V$ and set $\nu_k:=(\mu_k)_V$.
Define the potential
\[
U_k(x)
:=
\int_{\pi_V E}
\widetilde{\varphi}^{\,r_k,\Phi}_s(x-y)\,d\nu_k(y),
\qquad x\in\pi_V E.
\]
Then
\[
\int_{\pi_V E} U_k(x)\,d\nu_k(x)
=
\mathcal{I}_{\widetilde{\varphi}^{\,r_k,\Phi}_s}(\nu_k)
\le
M_{V}\, r_k^{-\varepsilon}\,\gamma_k.
\]
By Markov’s inequality, the set
\[
F_k
:=
\Bigl\{x\in\pi_V E:
U_k(x)\le 2M_{V}\, r_k^{-\varepsilon}\,\gamma_k
\Bigr\}
\]
satisfies $\nu_k(F_k)\ge \tfrac12$.

Applying Tool~I (Lemma~\ref{lem:pot-cover-tool}) to $F_k$ yields that any admissible
cover of $\pi_V E$ by sets of diameter between $\Phi(r_k)$ and $r_k$
satisfies
\[
S^s_{r_k,\Phi}(\pi_V E)
\ge
\frac{\Phi(r_k)^s}{2M_{V}\, r_k^{-\varepsilon}\,\gamma_k}\,\nu_k(F_k)
\ge
\frac{\Phi(r_k)^s}{4M_{V}\, r_k^{-\varepsilon}\,\gamma_k}.
\]
Since $\gamma_k$ grows at most polynomially in $1/r_k$, it follows that
\[
\liminf_{k\to\infty}
\frac{\log S^s_{r_k,\Phi}(\pi_V E)}{\log r_k}
\ge 0
\]
after letting $\varepsilon\downarrow0$. By the definition of the lower
$\Phi$--intermediate dimension (equivalently, by Corollary~\ref{cor:uniform-potential-to-dimension}),
this yields
\[
\underline{\dim}_{\Phi}(\pi_V E)\ge s.
\]

Since the above argument holds for every
$s<\underline{\dim}^{\,m}_{\Phi}E$, we conclude that
\[
\underline{\dim}_{\Phi}(\pi_V E)
\ge
\underline{\dim}^{\,m}_{\Phi}E
\qquad\text{for }\gamma_{n,m}\text{-almost every }V\in G(n,m).
\]

An identical argument, replacing $\liminf$ by $\limsup$, shows that
\[
\overline{\dim}_{\Phi}(\pi_V E)
\ge
\overline{\dim}^{\,m}_{\Phi}E
\qquad\text{for }\gamma_{n,m}\text{-almost every }V\in G(n,m).
\]
Together with Lemma~\ref{lem:proj-monotonicity}, this establishes
\eqref{eq:proj-equality} and completes the proof.
\end{proof}

\section{Consequences and Examples}
\label{sec:applications}

In this section we record several mathematical consequences of
Theorem~\ref{thm:Phi-projection}.  Our emphasis is on structural
properties of intermediate dimensions, particularly continuity at the
Hausdorff end-point, and on sharp criteria governing the box--counting
dimensions of generic projections.

\subsection{Continuity at the Hausdorff end-point}

Recall that for $\theta\in[0,1]$ the $\theta$--intermediate dimensions
interpolate between Hausdorff and box--counting dimensions.  A bounded
set $E\subset\mathbb{R}^n$ is said to have \emph{continuous intermediate
dimensions at $0$} if
\[
\lim_{\theta\to 0^+} \underline{\dim}_\theta E
=
\underline{\dim}_H E.
\]
Equivalently, the lower $\theta$--intermediate dimensions converge to
the Hausdorff dimension as the lower scale restriction is removed.

This notion extends naturally to general gauge functions.
We say that $E$ has \emph{continuous $\Phi$--intermediate dimensions at
$0$} if
\[
\inf\{\underline{\dim}_\Phi E:\ \Phi(r)/r\to 0\}
=
\underline{\dim}_H E.
\]

The projection theorem immediately implies that continuity at $0$ is
preserved under almost all orthogonal projections.

\begin{corollary}\label{cor:cont-zero}
Let $E\subset\mathbb{R}^n$ be bounded and fix $1\le m<n$.
If $E$ has continuous $\Phi$--intermediate dimensions at $0$, then for
$\gamma_{n,m}$--almost every $V\in G(n,m)$,
\[
\inf\{\underline{\dim}_\Phi(\pi_V E):\ \Phi(r)/r\to 0\}
=
\underline{\dim}_H(\pi_V E).
\]
In particular, if
$\lim_{\theta\to 0^+}\underline{\dim}_\theta E=\underline{\dim}_H E$,
then
\[
\lim_{\theta\to 0^+}\underline{\dim}_\theta(\pi_V E)
=
\underline{\dim}_H(\pi_V E)
\quad\text{for almost every }V.
\]
\end{corollary}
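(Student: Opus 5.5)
The plan is to sandwich the $\Phi$--dimensions of a typical projection between its Hausdorff dimension and the profile values $\underline{\dim}^{\,m}_\Phi E$, and then use Marstrand--Mattila at the bottom. Fix $V$ outside a $\gamma_{n,m}$--null set, to be specified below. For every admissible $\Phi$, Proposition~\ref{prop:basic}(i) gives
\[
\underline{\dim}_H(\pi_V E)\le \underline{\dim}_\Phi(\pi_V E),
\]
so the infimum on the left of the claimed identity is at least $\underline{\dim}_H(\pi_V E)$. For the reverse inequality, Lemma~\ref{lem:proj-monotonicity}, which holds for every $V$ and every $\Phi$ and so needs no exceptional set, gives $\underline{\dim}_\Phi(\pi_V E)\le \underline{\dim}^{\,m}_\Phi E$. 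It therefore suffices to show
\[
\inf_\Phi \underline{\dim}^{\,m}_\Phi E \le \min\{\underline{\dim}_H E,\,m\}.
\]
The Marstrand--Mattila theorem (for Borel $E$) then identifies the right-hand side with $\underline{\dim}_H(\pi_V E)$ for $\gamma_{n,m}$--almost every $V$, and that is the only place where a null set is discarded. Because this null set does not depend on $\Phi$, the argument is uniform over the uncountable family of gauges.

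The key step is bounding $\underline{\dim}^{\,m}_\Phi E$. Two estimates are available:
\begin{itemize}
\item Since profiles take values in $[0,m]$, $\underline{\dim}^{\,m}_\Phi E\le m$ trivially.
\item By Lemma~\ref{lem:Phi-monotone-m} and Theorem~\ref{thm:Phi-dim-equals-profile}, $\underline{\dim}^{\,m}_\Phi E\le \underline{\dim}^{\,n}_\Phi E=\underline{\dim}_\Phi E$.
\end{itemize}
Taking the infimum over $\Phi$ and using the continuity hypothesis on $E$ gives
\[
\inf_\Phi\underline{\dim}^{\,m}_\Phi E\le \min\{\inf_\Phi\underline{\dim}_\Phi E,\,m\}=\min\{\underline{\dim}_H E,\,m\},
\]
which closes the chain.

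For the $\theta$ statement, specialise to $\Phi_\theta(r)=r^{1/\theta}$. For $\theta<1$ this is admissible, and Proposition~\ref{prop:basic} shows $\theta\mapsto\underline{\dim}_\theta$ is monotone, so the limit as $\theta\to0^+$ exists and equals the infimum over $\theta$. The same sandwich then gives
\[
\underline{\dim}_H(\pi_VE)\le \underline{\dim}_\theta(\pi_VE)\le \min\{\underline{\dim}_\theta E,\,m\}.
\]
Letting $\theta\to0^+$, the right-hand side tends to $\min\{\underline{\dim}_H E,m\}=\underline{\dim}_H(\pi_V E)$ for almost every $V$.

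The main obstacle is the bound $\underline{\dim}^{\,m}_\Phi E\le m$ combined with the comparison between the profile in dimension $m$ and the profile in dimension $n$. This rests on the kernel being monotone in $m$, which requires $s\le m$ throughout. One must therefore check that the definition of the profile as a supremum over $s\in[0,m]$ is compatible with the definition over $s\in[0,n]$ when applying Lemma~\ref{lem:Phi-monotone-m}. If that comparison turns out to be delicate, I would instead compare the capacities $C^{\,r,\Phi}_{s,m}\le C^{\,r,\Phi}_{s,n}$ directly for $s\le m$, via Proposition~\ref{prop:cap-cover}.
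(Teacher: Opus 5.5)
Your argument is correct, but it takes a different and leaner route than the paper's proof.

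\textbf{The paper's route.} It fixes a gauge with $\underline{\dim}_\Phi E<\underline{\dim}_H E+\varepsilon$. It then invokes the almost-sure \emph{equality} $\underline{\dim}_\Phi(\pi_V E)=\underline{\dim}^{\,m}_\Phi E$ of Theorem~\ref{thm:Phi-projection}, including that theorem's harder lower-bound half. Finally it appeals to an ``identification of $\Phi$--profiles at the Hausdorff end-point'' to send $\underline{\dim}^{\,m}_\Phi E$ to $\min\{\underline{\dim}_H E,m\}$. Three things are left implicit there:
\begin{itemize}
\item that identification is only asserted, not proved;
\item the exceptional null set depends on $\Phi$, so one must tacitly pass to a countable sequence of gauges before taking the infimum;
\item the $\theta$--clause is not addressed separately.
\end{itemize}

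\textbf{Your route.} You use only bounds valid for \emph{every} $V$: Proposition~\ref{prop:basic}(i) from below, and Lemma~\ref{lem:proj-monotonicity} together with $\underline{\dim}^{\,m}_\Phi E\le\min\{\underline{\dim}_\Phi E,m\}$ from above. So the only null set you discard is the Marstrand--Mattila one, which is independent of $\Phi$ and $\theta$. You also never need the other half of the end-point identification, $\underline{\dim}^{\,m}_\Phi E\ge\min\{\underline{\dim}_H E,m\}$, because the lower bound comes for free from $\underline{\dim}_H(\pi_V E)$. Your treatment of the $\theta$--clause, via monotonicity in $\theta$, is complete.

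\textbf{What each buys.} The paper's route, if completed, gives more: the almost-sure value of each individual $\underline{\dim}_\Phi(\pi_V E)$. That extra information is not needed for this corollary, and your route avoids the gaps listed above.

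\textbf{Your last paragraph.} The worry there is a non-issue. For $s\in[0,m]$ the kernel is pointwise non-increasing in the tail exponent, so $C^{\,r,\Phi}_{s,m}(E)\le C^{\,r,\Phi}_{s,n}(E)$. Hence every $s$ admissible in the supremum defining $\underline{\dim}^{\,m}_\Phi E$ is also admissible for $\underline{\dim}^{\,n}_\Phi E$. Lemma~\ref{lem:Phi-monotone-m} applies as stated, and Proposition~\ref{prop:cap-cover} is not needed.

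\textbf{Optional simplification.} You could bypass profiles entirely. Project a $\Phi$--admissible cover of $E$, and enlarge any image of diameter below $\Phi(r)$ to diameter exactly $\Phi(r)$, which is still at most the original diameter. This gives $S^s_{r,\Phi}(\pi_V E)\le S^s_{r,\Phi}(E)$, and hence $\underline{\dim}_\Phi(\pi_V E)\le\min\{\underline{\dim}_\Phi E,m\}$ directly.

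As in the paper, $E$ must be Borel for Marstrand--Mattila, which you correctly flag.
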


\begin{proof}
Fix $\varepsilon>0$.  By continuity at $0$, choose an admissible gauge
$\Phi$ with $\Phi(r)/r\to 0$ such that
$\underline{\dim}_\Phi E<\underline{\dim}_H E+\varepsilon$.
By Theorem~\ref{thm:Phi-projection},
\[
\underline{\dim}_\Phi(\pi_V E)
=
\underline{\dim}^{\,m}_\Phi E
\quad\text{for $\gamma_{n,m}$--almost every }V.
\]
Letting $\Phi(r)/r\to 0$ forces $\underline{\dim}^{\,m}_\Phi E\to \min\{\underline{\dim}_H E,m\}= \underline{\dim}_H(\pi_V E)$ \emph{(by the identification of $\Phi$--profiles at the Hausdorff end-point)}, and the result follows.
\end{proof}

\subsection{Hausdorff dimension versus projected box dimensions}

A central question in projection theory is whether small Hausdorff
dimension can coexist with maximal box--counting dimension in generic
projections.  The next result shows that continuity at $0$ completely
rules out this phenomenon.

\begin{corollary}\label{cor:proj-box}
Let $E\subset\mathbb{R}^n$ be bounded and let $1\le m<n$.
\begin{enumerate}
\item[(i)]
If $\underline{\dim}_H E<m$ and $E$ has continuous
$\Phi$--intermediate dimensions at $0$, then
\[
\overline{\dim}_B(\pi_V E)<m
\quad\text{for $\gamma_{n,m}$--almost every }V.
\]
\item[(ii)]
If $\underline{\dim}_H E\ge m$, then
\[
\underline{\dim}_H(\pi_V E)
=
\underline{\dim}_B(\pi_V E)
=
\overline{\dim}_B(\pi_V E)
=
m
\quad\text{for $\gamma_{n,m}$--almost every }V.
\]
\end{enumerate}
\end{corollary}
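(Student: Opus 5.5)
Part (ii) follows directly from Marstrand--Mattila and the sandwich in Proposition~\ref{prop:basic}(i); part (i) needs a small-gauge estimate combined with the upper bound from Theorem~\ref{thm:Phi-projection}.

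\textbf{Part (ii).} Assume $\underline{\dim}_H E\ge m$.
\begin{itemize}
\item By the Marstrand--Mattila theorem, $\underline{\dim}_H(\pi_V E)=\min\{\underline{\dim}_H E,m\}=m$ for $\gamma_{n,m}$--almost every $V$.
\item Since $\pi_V E\subset V\cong\mathbb{R}^m$, its upper box dimension is at most $m$.
\item Proposition~\ref{prop:basic}(i), applied in $\mathbb{R}^m$ with any admissible $\Phi$, gives
\[
m=\underline{\dim}_H(\pi_V E)\le\underline{\dim}_\Phi(\pi_V E)\le\overline{\dim}_B(\pi_V E)\le m.
\]
Together with the trivial inequality $\underline{\dim}_B\le\overline{\dim}_B$, all four quantities equal $m$.
\end{itemize}
This part needs no new work.

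\textbf{Part (i).} Assume $\underline{\dim}_H E<m$ and continuity at $0$.
\begin{itemize}
\item Set $\varepsilon:=(m-\underline{\dim}_H E)/2$.
\item Using continuity at $0$, choose an admissible $\Phi$ with $\underline{\dim}_\Phi E<\underline{\dim}_H E+\varepsilon<m$.
\item Lemma~\ref{lem:Phi-monotone-m} and Theorem~\ref{thm:Phi-dim-equals-profile} give $\underline{\dim}^{\,m}_\Phi E\le\underline{\dim}^{\,n}_\Phi E=\underline{\dim}_\Phi E<m$.
\item Theorem~\ref{thm:Phi-projection}, or just the all-$V$ upper bound of Lemma~\ref{lem:proj-monotonicity}, then gives $\underline{\dim}_\Phi(\pi_V E)<m$. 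This only controls the lower $\Phi$--dimension of the projection.
\end{itemize}

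To reach $\overline{\dim}_B(\pi_V E)$ I need a bound on the box dimension itself. I would work through the upper quantities:
\begin{enumerate}
\item Run the same choice for $\overline{\dim}_\Phi$, using the upper profile and the upper half of \eqref{eq:proj-upper}, to get $\overline{\dim}_\Phi(\pi_V E)<m$.
\item Transfer this to $\overline{\dim}_B$ with a Falconer--Howroyd-type profile inequality. Compare $C^{\,r,\Phi}_{s,m}$ with the box kernel $\min\{1,(r/\|x\|)^m\}$ via Lemma~\ref{lem:Phi-capacity-monotone}. Then use that $\overline{\dim}_B(\pi_V E)$ equals the box profile $\overline{\dim}^{\,m}_B E$ almost surely, which is the endpoint $\Phi(r)=r$ of Theorem~\ref{thm:Phi-projection}.
\item Show that $\overline{\dim}^{\,m}_B E<m$ whenever the $\Phi$--profile is $<m$. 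The interpolation estimate \eqref{eq:Phi-lemma32} bounds the gap between the two profiles by a factor of order $\theta_\Phi(r)^{-1}$.
\end{enumerate}

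The transfer in step 2 is the main obstacle. Interpolation inequalities of the form $\overline{\dim}_B\le$ (something in $\overline{\dim}_\theta$ and $\theta$) hold for $\theta$--intermediate dimensions; this is the standard FFK bound $\overline{\dim}_B F\le \overline{\dim}_\theta F/\theta$ in its refined form. I would first try to establish the analogous $\Phi$--version from Lemma~\ref{lem:Phi-critical}, namely
\[
\overline{\dim}_B F\le \limsup_{r\to0}\theta_\Phi(r)^{-1}\,\overline{\dim}_\Phi F .
\]
For this to give a bound strictly below $m$, I need $\Phi$ with $\theta_\Phi(r)\to1$, i.e.\ $\Phi(r)=r^{1+o(1)}$. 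I would then use the continuity hypothesis with such near-box gauges, which is permitted since only $\Phi(r)/r\to0$ is required. Whether continuity at $0$ really supplies smallness for these gauges is exactly the delicate point to verify.
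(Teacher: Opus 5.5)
\paragraph{Part (ii).} Your argument is fine, and more direct than the paper's, which goes through Theorem~\ref{thm:Phi-projection}: it chooses small gauges with $\underline{\dim}^{\,m}_\Phi E=m$ and then passes to a box-counting limit. There is one slip. Your chain never bounds $\underline{\dim}_B(\pi_VE)$ from below. Add the standard inequality $\underline{\dim}_H F\le\underline{\dim}_B F$, or note that $\underline{\dim}_\Phi F\le\underline{\dim}_B F$ because single-scale covers are admissible. Marstrand--Mattila also needs $E$ to be Borel, as assumed in Theorem~\ref{thm:Phi-projection}.

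\paragraph{Part (i) cannot be completed, because (i) is false as stated.} Your step~1 has nothing to work with: the continuity hypothesis controls only $\underline{\dim}_\Phi E$, never $\overline{\dim}_\Phi E$. Your step~3 fails outright. Here is a counterexample with $n=2$, $m=1$.
\begin{itemize}
\item Set $\ell_1=1/4$ and define recursively $\delta_j:=\ell_j^{\,j+1}$ and $\ell_{j+1}:=\delta_j^{\,j+1}$.
\item Let $A_j$ be the grid of mesh $\delta_j$ in the square $Q_j:=[\ell_j,2\ell_j]^2$, and put $E:=\{0\}\cup\bigcup_j A_j$. This set is compact and countable, so $\dim_H E=0$.
\item At scale $\ell_{j+1}$, the set $\bigcup_{i>j}A_i$ lies in $[0,2\ell_{j+1}]^2$, and $\#\bigcup_{i\le j}A_i\le 4j\,\delta_j^{-2}$. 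Since $-\log\ell_{j+1}=(j+1)\log(1/\delta_j)$, we get $\underline{\dim}_B E=0$.
\item Consequently $\underline{\dim}_\Phi E=0$ for every admissible $\Phi$, so the hypothesis of (i) holds in the strongest possible sense.
\item For every line $V$, the set $\pi_VA_j$ is $\delta_j$-dense in the interval $\pi_VQ_j$, whose length is at least $\ell_j$. Hence $N_{\delta_j}(\pi_VE)\gtrsim\ell_j/\delta_j=\delta_j^{-j/(j+1)}$.
\item Therefore $\overline{\dim}_B(\pi_VE)=1=m$ for \emph{every} $V$, contradicting (i).
\end{itemize}

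Reading the hypothesis with upper dimensions does not rescue the statement. The paper's definition takes the infimum over all gauges with $\Phi(r)/r\to0$, and every compact $E$ satisfies $\inf_\Phi\overline{\dim}_\Phi E=\dim_H E$. To see this, take $s>\dim_H E$. Since $\mathcal H^s(E)=0$ and $E$ is compact, there are finite covers at scale $r$ with small $s$-sums. Now choose $\Phi(r)$ below their smallest diameters, diagonally in $s$. So the same $E$ is again a counterexample.

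\paragraph{The paper's argument and what a true version needs.} The paper's proof of (i) has the same defect. Monotonicity in the gauge only gives $\underline{\dim}_\Phi(\pi_VE)\le\underline{\dim}_B(\pi_VE)$, which is a lower bound on box dimension. Its intermediate claim $\overline{\dim}_B(\pi_VE)\le\underline{\dim}_H E$ fails for the set above.

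Your inequality $\overline{\dim}_B F\le\limsup_{r\to0}\theta_\Phi(r)^{-1}\,\overline{\dim}_\Phi F$ is correct. As you suspected, though, it helps only for near-box gauges, where smallness of $\overline{\dim}_\Phi$ amounts to smallness of the box dimension itself.

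A true statement needs the Burrell--Falconer--Fraser setting: upper dimensions and power gauges, that is, $\overline{\dim}_\theta E\to\dim_H E<m$ as $\theta\to0$. The key ingredient is the Falconer--Fraser--Kempton lower bound for $F\subset\mathbb{R}^m$:
\[
\overline{\dim}_\theta F\;\ge\;\frac{\theta m\,\overline{\dim}_B F}{m-(1-\theta)\overline{\dim}_B F}.
\]
This forces $\overline{\dim}_\theta F=m$ for all $\theta\in(0,1]$ as soon as $\overline{\dim}_B F=m$. Combined with $\overline{\dim}_\theta(\pi_VE)\le\overline{\dim}_\theta E$, it gives $\overline{\dim}_B(\pi_VE)<m$, in fact for every $V$. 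This mechanism is the idea your transfer step was missing, and it genuinely breaks down for super-polynomial gauges. That is why no argument based on the all-gauge notion of continuity can succeed.
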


\begin{proof}
(i) If $\underline{\dim}_H E<m$, then
$\underline{\dim}_H^{\,m}E=\underline{\dim}_H E$.
Continuity at $0$ allows us to choose gauges $\Phi$ with
$\Phi(r)/r\to 0$ such that
$\underline{\dim}^{\,m}_\Phi E=\underline{\dim}_H E$.
For almost every $V$,
Theorem~\ref{thm:Phi-projection} gives
$\underline{\dim}_\Phi(\pi_V E)=\underline{\dim}_H E$.
Letting $\Phi(r)/r\to 1$ (corresponding to the box--counting limit, and
using monotonicity of $\underline{\dim}_\Phi$ in the gauge) yields
$\overline{\dim}_B(\pi_V E)\le\underline{\dim}_H E<m$.

(ii) If $\underline{\dim}_H E\ge m$, then Marstrand’s theorem gives
$\underline{\dim}_H(\pi_V E)=m$ for almost every $V$.
For sufficiently small gauges $\Phi$ one has
$\underline{\dim}^{\,m}_\Phi E=m$, and hence
$\underline{\dim}_\Phi(\pi_V E)=m$ for almost every $V$.
Passing to the box--counting limit yields
$\underline{\dim}_B(\pi_V E)=m$, and the equality of upper and lower box
dimensions follows.
\end{proof}

\subsection{Examples}

We now present examples illustrating the behaviour of $\Phi$--intermediate dimensions and their projections.  

\begin{example}[Continuity at the Hausdorff end-point for $\Phi$]\label{ex:Phi-cont}
Let $E\subset\mathbb{R}^n$ be a bounded set with
$\underline{\dim}_H E=\alpha$.  Suppose that
\[
\inf\{\underline{\dim}_\Phi E:\ \Phi(r)/r\to 0\}=\alpha.
\]
Equivalently, for every $\varepsilon>0$ there exists an admissible gauge
$\Phi$ with $\Phi(r)/r\to 0$ such that
\[
\underline{\dim}_\Phi E<\alpha+\varepsilon.
\]

Fix $1\le m<n$.  By Theorem~\ref{thm:Phi-projection}, for
$\gamma_{n,m}$--almost every $V\in G(n,m)$,
\[
\underline{\dim}_\Phi(\pi_V E)=\underline{\dim}^{\,m}_\Phi E
\qquad\text{for all admissible }\Phi.
\]
Letting $\Phi(r)/r\to 0$ yields
\[
\inf\{\underline{\dim}_\Phi(\pi_V E):\ \Phi(r)/r\to 0\}
=
\min\{\alpha,m\}
=
\underline{\dim}_H(\pi_V E).
\]
Thus, continuity of $\Phi$--intermediate dimensions at the Hausdorff
end-point is preserved under almost all orthogonal projections.
\end{example}

\begin{example}[Bedford-McMullen carpets]\label{ex:Phi-bedford}
Let $E\subset\mathbb{R}^2$ be a Bedford-McMullen carpet associated to an
$a\times b$ grid with $b>a\ge 2$, and assume that $\log a/\log b$ is
irrational.  It is known that
\[
\underline{\dim}_H E < \underline{\dim}_B E
\]
and that the intermediate dimensions of $E$ are continuous at the
Hausdorff end-point.  In particular,
\[
\underline{\dim}_\Phi E=\underline{\dim}_H E
\qquad
\text{for every admissible }\Phi\text{ with }\Phi(r)/r\to 0.
\]

Taking $m=1$, Theorem~\ref{thm:Phi-projection} implies that for
$\gamma_{2,1}$--almost every $V\in G(2,1)$,
\[
\underline{\dim}_\Phi(\pi_V E)
=
\underline{\dim}^{\,1}_\Phi E
=
\underline{\dim}_H E
\qquad
\text{for all such }\Phi.
\]
Consequently,
\[
\inf\{\underline{\dim}_\Phi(\pi_V E):\ \Phi(r)/r\to 0\}
=
\underline{\dim}_H(\pi_V E),
\]
so generic projections of $E$ inherit continuity of $\Phi$--intermediate
dimensions at the Hausdorff end-point.

On the other hand, for gauges $\Phi$ approaching the box--counting
regime (that is, $\Phi(r)/r\to 1$), one has
$\underline{\dim}_\Phi(\pi_V E)=1$ for almost every $V$.  Thus the
$\Phi$--dimensions of $\pi_V E$ interpolate strictly between
$\underline{\dim}_H(\pi_V E)$ and $\underline{\dim}_B(\pi_V E)$ as the
lower scale restriction varies.
\end{example}

\begin{example}[Failure of continuity at zero]\label{ex:Phi-discont}
Let $p>0$ and define
\[
F_p:=\{0\}\cup\{n^{-p}:n\in\mathbb{N}\},
\qquad
E:=F_p\times F_p\subset\mathbb{R}^2.
\]
Then $\underline{\dim}_H E=0$, while
\[
\underline{\dim}_B E=\overline{\dim}_B E=\frac{2}{1+p}.
\]
Moreover, for every admissible gauge $\Phi$ satisfying
\[
\Phi(r)\ge r\,|\log r|^{-C}
\qquad
\text{for some }C>0\text{ and all sufficiently small }r,
\]
one has
\[
\underline{\dim}_\Phi E=\frac{2}{1+p}.
\]
In particular,
\[
\inf\{\underline{\dim}_\Phi E:\ \Phi(r)/r\to 0\}
=
\frac{2}{1+p}
>
\underline{\dim}_H E,
\]
so $E$ does not have continuous $\Phi$--intermediate dimensions at the
Hausdorff end-point.

Now consider orthogonal projections onto lines.  For any
$V\in G(2,1)$ not parallel to the coordinate axes,
$\underline{\dim}_H(\pi_V E)=0$, while
\[
\underline{\dim}_B(\pi_V E)
=
1-\Big(\frac{p}{p+1}\Big)^2
<1.
\]
Thus, although the projected set has large box--counting dimension, it
never attains the maximal value $1$.  This example shows that continuity
at the Hausdorff end-point is essential for the conclusions of
Corollary~\ref{cor:proj-box}; without it, large projected
$\Phi$--dimensions may occur, but full-dimensional projections are still
excluded.
\end{example}

This example highlights the sharpness of the continuity at $0$
assumption in Corollary~\ref{cor:proj-box} and motivates the open
questions discussed in Section~\ref{sec:conclusion}.

\section{Concluding remarks and open problems}
\label{sec:conclusion}

In this paper we have shown that the projection invariance of fractal
dimension extends well beyond the classical Hausdorff and box--counting
settings, encompassing a full spectrum of $\Phi$--intermediate
dimensions associated with arbitrary admissible gauge functions.
Our main result (Theorem~\ref{thm:Phi-projection}) establishes that for
any bounded Borel set $E\subset\mathbb{R}^n$ and any integer
$1\le m<n$, the $\Phi$--intermediate dimensions of $\gamma_{n,m}$--almost
every orthogonal projection $\pi_V E$ coincide with the corresponding
$\Phi$--dimension profiles of $E$. This recovers the classical projection
theorems of Marstrand and Mattila for Hausdorff and packing dimensions
as special cases~\cite{Marstrand1954,Mattila1995}, and it substantially
extends recent work on $\theta$--intermediate dimensions
(cf.~\cite{BFF2021}) to the full generality of non--uniform scale
restrictions.

Beyond providing a unified projection framework for intermediate
dimensions, our results clarify the structural role played by
\emph{continuity at the Hausdorff end-point}. In particular, we showed
that this continuity property is preserved under almost all orthogonal
projections and that it serves as a sharp criterion governing whether
generic projections can attain maximal box--counting dimension. This
perspective highlights the importance of multi--scale regularity, as
captured by the behaviour of $\underline{\dim}_\Phi E$ as $\Phi(r)\to0$,
in determining the extremal geometry of projections.

Despite these advances, several natural questions remain open and
suggest promising directions for future research.

\begin{enumerate}[label=(\roman*)]

\item \textit{Intermediate dimensions of measures.}
The present work focuses on fractal dimensions of sets, defined via
covering or capacity-based constructions. A natural next step is to
develop a parallel theory for \emph{measures}. For instance, one could
seek to define $\Phi$--intermediate analogues of classical
measure-theoretic dimensions, such as entropy dimensions or
$L^q$--dimensions, which interpolate between information dimension,
correlation dimension, and box dimension. At present, the projection
behaviour of these measure dimensions is poorly understood outside of
highly structured or self-similar settings. It would be particularly
interesting to determine whether the multi--scale capacity and energy
methods developed here for sets can be adapted to yield projection
theorems for such measure-theoretic intermediate dimensions.

\item \textit{Two-parameter dimension interpolations.}
Recently, Douzi and Selmi~\cite{DouziSelmi2022} introduced a different
family of fractal dimensions interpolating between Hausdorff dimension
and the Hewitt-Stromberg dimension, the latter being a fixed-scale
variant of the packing dimension. Projection theorems were shown to
hold in that setting as well. This raises the possibility of defining a
more general \emph{two-parameter} family of dimensions that simultaneously
imposes both lower and upper scale constraints on coverings, thereby
interpolating between Hausdorff, $\Phi$--intermediate, and
Hewitt-Stromberg dimensions. Developing such a theory, and determining
whether a corresponding projection invariance principle holds, would
provide a unified framework for a wide range of scale-restricted
dimension notions.

\item \textit{Sharpness of continuity assumptions.}
Several of our results concerning projected box--counting dimensions,
notably Corollary~\ref{cor:proj-box}, rely on a technical continuity
assumption at the Hausdorff end-point. An important open question is
whether this hypothesis is genuinely necessary. More precisely, can
one construct a set $E$ satisfying
\[
\underline{\dim}_H E < m < \underline{\dim}_B E
\]
for which $\underline{\dim}_B(\pi_V E)=m$ for $\gamma_{n,m}$--almost
every $V$? Our examples suggest that any such extremal construction
would require highly pathological behaviour of the intermediate
dimensions $\underline{\dim}_\Phi E$ as $\Phi(r)\to0$
(cf.~Example~\ref{ex:Phi-discont}), but at present no definitive answer
is known.
\end{enumerate}

We hope that the results and techniques developed here will stimulate
further investigation into the geometry of fractal projections and
contribute to a deeper understanding of how dimension behaves in
intermediate and multi--scale regimes.

\appendix

\section{Equilibrium measures and the equipotential property}
\label{app:equilibrium-proof}

In this appendix we give the proof of Lemma~\ref{lem:equilibrium}, which establishes
the existence of equilibrium measures for $\Phi$--capacities and their basic
equipotential property.
The argument is standard in potential theory but is included here for completeness.

\begin{proof}[Proof of Lemma~\ref{lem:equilibrium}]
Fix $s\in[0,m]$, $r>0$, and let $E\subset\mathbb{R}^n$ be non-empty and compact.
Define the symmetric kernel
\[
K(x,y):=\varphi^{r,\Phi}_{s,m}(x-y), \qquad x,y\in E.
\]
Since $\varphi^{r,\Phi}_{s,m}$ is continuous and bounded, so is $K$ on $E\times E$.
The energy functional
\[
\mu \longmapsto \mathcal I^{\,r,\Phi}_{s,m}(\mu)
=\iint_{E\times E}K(x,y)\,d\mu(x)\,d\mu(y)
\]
is therefore continuous on $M(E)$ with respect to weak$^\ast$ convergence.
As $M(E)$ is weak$^\ast$ compact, the infimum of $\mathcal I^{\,r,\Phi}_{s,m}$
is attained by some $\mu^\ast\in M(E)$.

Define the associated potential
\[
U^{r,\Phi}_{s,m}(x)
:=\int_E K(x,y)\,d\mu^\ast(y),
\qquad x\in\mathbb{R}^n.
\]

\smallskip\noindent
\emph{Variational inequality.}
Let $\nu\in M(E)$ and consider
$\mu_\varepsilon=(1-\varepsilon)\mu^\ast+\varepsilon\nu$ for $\varepsilon\in(0,1)$.
By minimality of $\mu^\ast$,
\[
\mathcal I^{\,r,\Phi}_{s,m}(\mu_\varepsilon)
\ge
\mathcal I^{\,r,\Phi}_{s,m}(\mu^\ast).
\]
Expanding the energy, dividing by $\varepsilon$, and letting $\varepsilon\downarrow0$
yields
\[
\int_E U^{r,\Phi}_{s,m}(x)\,d\nu(x)
\ge
\mathcal I^{\,r,\Phi}_{s,m}(\mu^\ast)
\qquad\text{for all }\nu\in M(E).
\tag{$\ast$}
\]

\smallskip\noindent
\emph{Pointwise lower bound.}
If there existed $x_0\in E$ such that
$U^{r,\Phi}_{s,m}(x_0)
<
\mathcal I^{\,r,\Phi}_{s,m}(\mu^\ast)$,
then by continuity of $U^{r,\Phi}_{s,m}$ there would exist a neighbourhood
$B\subset E$ of $x_0$ and $\eta>0$ such that
\[
U^{r,\Phi}_{s,m}(x)
\le
\mathcal I^{\,r,\Phi}_{s,m}(\mu^\ast)-\eta
\qquad\text{for all }x\in B.
\]
Choosing $\nu\in M(E)$ supported on $B$ contradicts $(\ast)$.
Hence
\[
U^{r,\Phi}_{s,m}(x)
\ge
\mathcal I^{\,r,\Phi}_{s,m}(\mu^\ast)
\qquad\text{for all }x\in E.
\]

\smallskip\noindent
\emph{Almost-everywhere equality.}
Finally,
\[
\int_E U^{r,\Phi}_{s,m}(x)\,d\mu^\ast(x)
=
\iint_{E\times E}K(x,y)\,d\mu^\ast(x)\,d\mu^\ast(y)
=
\mathcal I^{\,r,\Phi}_{s,m}(\mu^\ast).
\]
Since $U^{r,\Phi}_{s,m}\ge \mathcal I^{\,r,\Phi}_{s,m}(\mu^\ast)$ everywhere on $E$,
equality of the integral implies
\[
U^{r,\Phi}_{s,m}(x)
=
\mathcal I^{\,r,\Phi}_{s,m}(\mu^\ast)
\quad\text{for }\mu^\ast\text{-almost every }x\in E.
\]
This completes the proof.
\end{proof}

\section{Monotonicity of $\Phi$--capacities and well-posedness of profiles}
\label{app:monotonicity-proof}

In this appendix we prove Lemma~\ref{lem:Phi-capacity-monotone}, which provides the
monotonicity properties needed to define $\Phi$--dimension profiles via capacities.

\begin{proof}[Proof of Lemma~\ref{lem:Phi-capacity-monotone}]
Fix $0<r<1$ and set $\rho:=\Phi(r)$.
Then $0<\rho\le r<1$ and
\[
\theta_\Phi(r):=\frac{\log r}{\log \rho}\in(0,1],
\qquad\text{so that}\qquad
r=\rho^{\theta_\Phi(r)}.
\]

\smallskip\noindent
\emph{Step 1: Kernel comparison.}
Let $0\le t\le s\le m$.
From the definition of $\varphi^{r,\Phi}_{\cdot,m}$ one has
$\varphi^{r,\Phi}_{s,m}\le \varphi^{r,\Phi}_{t,m}$ pointwise.
Moreover, on the intermediate region $\rho\le\|x\|<r$,
\[
\frac{\varphi^{r,\Phi}_{t,m}(x)}{\varphi^{r,\Phi}_{s,m}(x)}
=
\Big(\frac{\|x\|}{\rho}\Big)^{s-t}
\le
\Big(\frac{r}{\rho}\Big)^{s-t}
=
\rho^{-(1-\theta_\Phi(r))(s-t)},
\]
and on the tail $\|x\|\ge r$ this ratio equals $(r/\rho)^{s-t}$ exactly.
Hence, for all $x\in\mathbb{R}^n$,
\[
\varphi^{r,\Phi}_{s,m}(x)
\le
\varphi^{r,\Phi}_{t,m}(x)
\le
\rho^{-(1-\theta_\Phi(r))(s-t)}\,\varphi^{r,\Phi}_{s,m}(x).
\tag{$\ast$}
\]

\smallskip\noindent
\emph{Step 2: Capacity comparison.}
Integrating $(\ast)$ against $\mu\times\mu$ yields, for every $\mu\in M(E)$,
\[
\mathcal I^{\,r,\Phi}_{s,m}(\mu)
\le
\mathcal I^{\,r,\Phi}_{t,m}(\mu)
\le
\rho^{-(1-\theta_\Phi(r))(s-t)}\mathcal I^{\,r,\Phi}_{s,m}(\mu).
\]
Taking infima over $\mu\in M(E)$ and inverting gives
\[
C^{\,r,\Phi}_{s,m}(E)
\ge
C^{\,r,\Phi}_{t,m}(E)
\ge
\rho^{(1-\theta_\Phi(r))(s-t)}\,C^{\,r,\Phi}_{s,m}(E).
\tag{$\ast\ast$}
\]

\smallskip\noindent
\emph{Step 3: Logarithmic inequality.}
From $(\ast\ast)$ we obtain
\[
0
\le
\frac{\log C^{\,r,\Phi}_{s,m}(E)-\log C^{\,r,\Phi}_{t,m}(E)}{-\log\rho}
\le
(1-\theta_\Phi(r))(s-t).
\]
Subtracting $(s-t)$ throughout yields
\[
-(s-t)
\le
\Big(\frac{\log C^{\,r,\Phi}_{s,m}(E)}{-\log\rho}-s\Big)
-
\Big(\frac{\log C^{\,r,\Phi}_{t,m}(E)}{-\log\rho}-t\Big)
\le
-\theta_\Phi(r)(s-t),
\]
which is exactly \eqref{eq:Phi-lemma32}.

\smallskip\noindent
\emph{Conclusion.}
The above inequalities imply that the functions
\[
s\longmapsto
\liminf_{r\to0}\Big(\frac{\log C^{\,r,\Phi}_{s,m}(E)}{-\log\Phi(r)}-s\Big),
\qquad
s\longmapsto
\limsup_{r\to0}\Big(\frac{\log C^{\,r,\Phi}_{s,m}(E)}{-\log\Phi(r)}-s\Big)
\]
are non-increasing on $[0,m]$, ensuring that the definitions of
$\underline{\dim}^{\,m}_{\Phi}E$ and $\overline{\dim}^{\,m}_{\Phi}E$
are well posed.
\end{proof}

\bibliographystyle{abbrv}
\phantomsection 
\bibliography{bibliography/keylatex}

@article{Banaji2023,
  author  = {Banaji, A.},
  title   = {Generalised intermediate dimensions},
  journal = {Monatshefte f{\"u}r Mathematik},
  volume  = {202},
  number  = {3},
  pages   = {465--506},
  year    = {2023},
  doi     = {10.1007/s00605-023-01884-5},
  url     = {https://link.springer.com/article/10.1007/s00605-023-01884-5}
}

@article{BFF2021,
  author  = {Burrell, S. A. and Falconer, K. J. and Fraser, J. M.},
  title   = {Projection theorems for intermediate dimensions},
  journal = {Journal of Fractal Geometry},
  volume  = {8},
  number  = {2},
  pages   = {95--116},
  year    = {2021},
  doi     = {10.4171/JFG/99},
  url     = {https://ems.press/journals/jfg/articles/949030}
}

@article{DouziSelmi2022,
  author  = {Douzi, Z. and Selmi, B.},
  title   = {Projection theorems for Hewitt--Stromberg and modified intermediate dimensions},
  journal = {Results in Mathematics},
  volume  = {77},
  pages   = {158},
  year    = {2022},
  doi     = {10.1007/s00025-022-01685-6},
  url     = {https://link.springer.com/content/pdf/10.1007/s00025-022-01685-6.pdf}
}

@incollection{Falconer2019,
  author    = {Falconer, K. J.},
  title     = {A capacity approach to box and packing dimensions of projections and other images},
  booktitle = {Analysis, Probability, and Mathematical Physics on Fractals},
  editor    = {Freiberg, Uta and Denker, Manfred},
  publisher = {World Scientific},
  year      = {2020},
  pages     = {25--54},
  doi       = {10.1142/11696},
  url       = {https://research-repository.st-andrews.ac.uk/handle/10023/21467}
}

@article{FalconerHowroyd1997,
  author  = {Falconer, K. J. and Howroyd, J. D.},
  title   = {Projection theorems for box and packing dimensions},
  journal = {Mathematical Proceedings of the Cambridge Philosophical Society},
  volume  = {119},
  number  = {2},
  pages   = {287--295},
  year    = {1996},
  doi     = {10.1017/S0305004100074168},
  url     = {https://www.cambridge.org/core/journals/mathematical-proceedings-of-the-cambridge-philosophical-society/article/projection-theorems-for-box-and-packing-dimensions/51DFD112BFFC36A0504F18F4ABBE0E14}
}

@article{FFK2019,
  author  = {Falconer, K. J. and Fraser, J. M. and Kempton, T.},
  title   = {Intermediate dimensions},
  journal = {Mathematische Zeitschrift},
  volume  = {296},
  number  = {1--2},
  pages   = {813--830},
  year    = {2020},
  doi     = {10.1007/s00209-019-02452-0},
  url     = {https://link.springer.com/content/pdf/10.1007/s00209-019-02452-0.pdf}
}

@article{Kaufman1968,
  author  = {Kaufman, R.},
  title   = {On Hausdorff dimension of projections},
  journal = {Mathematika},
  volume  = {15},
  number  = {2},
  pages   = {153--155},
  year    = {1968},
  doi     = {10.1112/S0025579300002503},
  url     = {https://www.cambridge.org/core/journals/mathematika/article/abs/on-hausdorff-dimension-of-projections/6D612609A49107FCED8160EDB009C1B9}
}

@article{Marstrand1954,
  author  = {Marstrand, J. M.},
  title   = {Some fundamental geometrical properties of plane sets of fractional dimensions},
  journal = {Proceedings of the London Mathematical Society},
  series  = {3},
  volume  = {4},
  number  = {1},
  pages   = {257--302},
  year    = {1954},
  doi     = {10.1112/plms/s3-4.1.257},
  url     = {https://academic.oup.com/plms/article-abstract/s3-4/1/257/1497989}
}

@article{Mattila1975,
  author  = {Mattila, P.},
  title   = {Hausdorff dimension, orthogonal projections and intersections with planes},
  journal = {Annales Academiae Scientiarum Fennicae. Series A~I Mathematics},
  volume  = {1},
  pages   = {227--244},
  year    = {1975},
  url     = {https://afm.journal.fi/article/download/134257/82829/293462}
}

@book{Mattila1995,
  author    = {Mattila, P.},
  title     = {Geometry of Sets and Measures in Euclidean Spaces: Fractals and Rectifiability},
  series    = {Cambridge Studies in Advanced Mathematics},
  volume    = {44},
  publisher = {Cambridge University Press},
  address   = {Cambridge},
  year      = {1995}
}
\end{document}